\documentclass[12pt,oneside,a4paper]{amsart}

\usepackage{fullpage}
\usepackage[left=2.2cm,right=2.2cm,top=2.2cm,bottom=2.2cm,footskip=1cm,includeheadfoot]{geometry}

\usepackage{hyperref}
\usepackage{amssymb,amsmath,mathrsfs,amsthm}
\usepackage[dvipsnames]{xcolor}
\usepackage{etoolbox}
\usepackage{fullpage}
\usepackage{calrsfs}
\usepackage[T1]{fontenc} 
\usepackage{amsfonts}
\usepackage[shortlabels]{enumitem}
\usepackage{mathtools}
\usepackage{bm}
\usepackage{tikz-cd}
\setenumerate{label={\normalfont(\textit{\roman*})}, ref={\textrm{\roman*}}
}
\interfootnotelinepenalty=10000

\patchcmd{\section}{\scshape}{\bfseries}{}{}
\makeatletter
\renewcommand{\@secnumfont}{\bfseries}
\makeatother

\setcounter{tocdepth}{1}
\DeclareRobustCommand{\SkipTocEntry}[5]{}

\newtheorem{introtheorem}{Theorem}

\theoremstyle{definition}

\newtheorem*{introdefinition*}{Definition}
\theoremstyle{plain}

\newcommand\Z{{\mathbf Z}}
\newcommand\N{{\mathbf N}}

\theoremstyle{plain}
\newtheorem{theorem}{Theorem}[section]
\newtheorem{proposition}[theorem]{Proposition}
\newtheorem{lemma}[theorem]{Lemma}
\newtheorem{corollary}[theorem]{Corollary}

\newtheorem{conjecture}[theorem]{Conjecture}
\theoremstyle{definition}
\newtheorem{definition}[theorem]{Definition}
\newtheorem{example}[theorem]{Example}
\newtheorem{remark}[theorem]{Remark}

\renewcommand{\geq}{\geqslant}
\renewcommand{\leq}{\leqslant}
\newcommand{\A}{\mathcal{A}}
\newcommand{\B}{\mathcal{B}}
\newcommand{\F}{\mathrm{F}}
\newcommand{\R}{\mathrm{R}}
\newcommand{\abs}[1]{\left| #1 \right|}

\AtBeginDocument{%
   \def\MR#1{}
}

\begin{document}

\date{\today}
 \title{Quasi-fixed points of substitutive systems}

\author[El\.zbieta Krawczyk]{El\.zbieta Krawczyk}

\address[EK]{Faculty of Mathematics\\
University of Vienna\\
Oskar Morgensternplatz 1\\
1090 Vienna \& 
Faculty of Mathematics and Computer Science\\Institute of Mathematics\\
Jagiellonian University\\
Stanis\l{}awa \L{}ojasiewicza 6\\
30-348 Krak\'{o}w}
\email{ela.krawczyk7@gmail.com}

\subjclass[2010]{Primary: 11B85, 37B10, 68R15. Secondary:  68Q45}
\keywords{automatic sequence, substitutive sequence, quasi-fixed point, factor map}

\maketitle
\begin{abstract}
	We study automatic sequences and automatic systems generated by general constant length (nonprimitive) substitutions.
	 While an automatic system is typically uncountable, the set of automatic sequences is countable, implying that most sequences within an automatic system are not themselves automatic. 
	We provide a complete and succinct classification of automatic sequences that lie in a given automatic system in terms of the quasi-fixed points of the substitution defining the system. 
	Our result extends to factor maps between automatic systems and highlights arithmetic properties underpinning these systems.
	We conjecture that a similar statement holds for general nonconstant length substitutions.
 \end{abstract}

\section*{Introduction}
	Automatic sequences are ubiquitous in mathematics, both as a general object of study and as a useful tool; their particularly famous examples include the Thue--Morse, the Rudin--Shapiro, or the Baum--Sweet sequence, see e.g.\ \cite[Sec.\ 5.1]{AlloucheShallit-book}.
	Since their introduction in 1969  by Cobham \cite{Cobham-1969}, they have proved to be an important concept in many algebraic, combinatorial, computational, and number-theoretic contexts \cite{AlloucheShallit-book, Fogg, ABsurvey}.

	Instead of a single automatic sequence $x\in\A^{\Z}$, one can consider its orbit closure under the shift map $T$, that is
	 \begin{align*}
	 \overline{\mathcal{O}(x)} & =\overline{\{T^n(x)  \mid n\in\Z\}} \\
	 & = \{z\in \A^{\Z}\mid \text{ all finite words appearing in } z \text{ appear also in } x\}.
	 \end{align*}
	This is a classical object studied in symbolic dynamics, which---via the second line above---also has a clear combinatorial interpretation.
	It is also closely related to DOL-systems studied in computer science \cite{RS-book}.	  
	Conversely, many combinatorial properties of the sequence itself can be studied via the symbolic system generated by it.
	While such systems are classically assumed to be minimal (corresponding to the substitution defining the sequence $x$ being primitive) \cite{Lothaire-book1,Lothaire-book2,Queffelec-book}, recent years show growing interest in the study of nonminimal substitutive systems \cite{MR,BKM, BSTY, BPRS2023, BKK, BPR24}.
	Furthermore, from a combinatorial or number-theoretic point of view minimality is not a natural assumption: many sequences naturally appearing in this contexts---such as e.g.\ the Baum--Sweet sequence or the family of sparse automatic sequences \cite{thesisSeda}---do not give rise to minimal systems. 
	
	 The problem we address in this paper is the following: which sequences in $\overline{\mathcal{O}(x)}$ are again automatic? 
	In Theorem \ref{introthm:characteraytomaticsequences} we obtain a complete answer to this question in terms of the constant length substitution $\varphi$ defining the sequence $x$.

 	Formally, a \textit{substitution} over an \emph{alphabet} $\mathcal{A}$ is a map $\varphi\colon \mathcal{A}\rightarrow \mathcal{A}^*$ that assigns to each letter $a\in\mathcal{A}$ some finite word $\varphi(a)$ in $\mathcal{A}^*$.\footnote{We always assume that $\varphi$ is \emph{growing}, i.e.\ the lengths of the words $\varphi^n(a)$ tend to infinity as $n\to\infty$ for all $a\in\A$.}
 	A substitution $\varphi$ is \emph{primitive} if there is $n\geq 1$ such that all letters from $\A$ appear in $\varphi^n(b)$ for each $b\in\A$.
 	A substitution  $\varphi$ is \textit{of (constant) length} $k$ if the length of $\varphi(a)$ is $k$ for each $a\in \mathcal{A}$.  A \textit{coding} is an arbitrary map $\tau\colon \mathcal{A}\rightarrow \mathcal{B}$ between two alphabets $\mathcal{A}$ and $\mathcal{B}$.  One can extend a substitution or a coding to a map acting on biinfinite sequences by concatenation in a natural way.
 	A point $x\in \A^{\Z}$ is called $\varphi$-\emph{periodic} if $\varphi^n(x)=x$ for some $n\geq 1$; if one can take $n=1$, then $x$ is said to be a \emph{fixed point of} $\varphi$.
 	A  sequence in $\mathcal{A}^{\mathbf{Z}}$ is called  \textit{substitutive} if it is the image under a coding of a fixed point of some substitution $\varphi\colon\mathcal{A}\rightarrow\mathcal{A}^*$. A substitutive sequence is $k$-\emph{automatic} if we may take $\varphi$ to be of constant length $k$.

	One may study substitutions by the symbolic systems they generate:  for a substitution $\varphi\colon \A\to\A^{*}$ we let 
\[X_{\varphi}=\{x\in\mathcal{A}^{\mathbf{Z}}\mid \textrm{ all factors of } x \textrm{ appear in } \varphi^n(a) \textrm{ for some } a\in\mathcal{A},\ n\geq 1\}\]
be the system generated by  $\varphi$. 
	A system $X$ is \emph{substitutive} if $X=\tau(X_{\varphi})$ for some substitution $\varphi\colon\A\to\A^{*}$ and coding $\tau\colon\A\to\B$; it is $k$-\emph{automatic} if one can take $\varphi$ to be of constant length $k$.
	 Note that for a substitutive (resp.\ automatic) sequence $x$ its orbit closure $\overline{\mathcal{O}(x)}$
	 is clearly a substitutive (resp.\ automatic) system.
	One of the central question is that of classification: when are two substitutive systems isomorphic as dynamical systems?; when is one system a (topological) factor of the other?; what is the automorphism group of a substitutive system?\footnote{For the definitions of morphism notions in  the category of topological dynamical systems see Section \ref{sec:dynamics}.} 
	Despite recent interest and some satisfying answers \cite{CDK17, CDK14, CKL08, Coven2015, ST-2015, DL-2018, MY-21}, a lot of interesting questions still remain open \cite[Sec.\ 10]{DL-2018}.

	Let $X$ be a substitutive (resp.\ automatic) system generated by some sequence $x$.
	It is not hard to see that one can choose the generating sequence $x$ to be substitutive (resp.\ automatic) itself (see Section \ref{sec:substsystems}).	
	However, such a sequence is never \emph{unique} e.g.\ every shift of $x$ is also a substitutive (resp.\ automatic) point generating $X$. 
	Since most dynamical systems (e.g.\ all nonperiodic recurrent systems \cite[Thm.\ 10.8.12]{AlloucheShallit-book}) have uncountably many points and there are only countably many substitutive sequences, most sequences in a substitutive system are not substitutive.
	 Furthermore, substitutive (resp.\ automatic) sequences are preserved under factor maps between subshifts and as such provide strong constraints for an existence of a factor map between two substitutive systems. 
	In view of this, a natural problem is to characterise the set of substitutive sequences that lie in a given substitutive system.
	 
	For a substitution $\varphi\colon\A\to\A^*$, we say that a point $x\in\A^{\Z}$ is a \emph{quasi-fixed point} of $\varphi$ if 
	\[x=T^c\varphi^m(x)\quad \text{for some } m\geq 1 \text{ and } c\in\Z.\]
	Quasi-fixed points have appeared implicitly in a few places  and  already proved to be a useful tool in the study of substitutive systems. 
	They appeared e.g.\ \begin{itemize}
\item in \cite{CRSZ, ARS09} in the study of substitutivity of lexicographically minimal/maximal points  in  substitutive systems; 
\item in \cite{HZ-01} in the study of certain directed graphs associated with primitive substitutions; 
\item in \cite{AP2024} in the study of the dimensions of the lower central series factors of a certain Lie algebra; 
\item in \cite{BKK} in the study of subsystems of substitutive systems and the finitary Cobham's theorem;
\item or in \cite{Coven2015} in the study of automorphism group of minimal automatic systems (disguised there as rational points in $\Z_k$, see Remark  \ref{rem:conjugacyproblemsviaquasifixed}).
\end{itemize}
	 They also appeared explicitly in  \cite{twosidedfixedpointsSW} (see also \cite[Sec.\ 7.4]{AlloucheShallit-book}) or more recently in \cite[Sec.\ 5]{BPR24}, where a detailed characterisation of such points was given.

	 Our main result---Theorem \ref{introthm:characteraytomaticsequences} below--- shows that the set of automatic  sequences in an automatic system $X_{\varphi}$ admits a very simple description in terms of the substitution $\varphi$: it comprises precisely of the quasi-fixed points of $\varphi$.
	 In fact, we work in a somewhat more general framework and obtain that all  lifts of nonperiodic automatic sequences by factor maps between automatic  systems are automatic.  
	 In particular, if $\pi\colon X_{\varphi}\to X_{\varphi'}$ is a factor map between two automatic  systems, then $\pi$ maps quasi-fixed points of $\varphi$ to quasi-fixed points of $\varphi'$, and all  preimages of nonperiodic quasi-fixed points of $\varphi'$ by $\pi$ are quasi-fixed points of $\varphi$.
	 In the case when $\varphi$ is primitive, its set of automatic sequences can be characterised in terms of the arithmetic properties of the ring $\Z_k$ of $k$-adic integers which underpins the dynamics of $X_{\varphi}$.
	 We note here that, by Fagnot's generalisation of Cobham's theorem \cite[Thm.\ 15]{Fagnot-1997}, all automatic sequences that appear in a $k$-automatic system are necessarily $k$-automatic.
	
\begin{introtheorem}[Thm.\ \ref{thm:characteraytomaticsequences} \& Cor.\ \ref{cor:automaticcorrespondtorational}] \label{introthm:characteraytomaticsequences}
Let $k\geq 2$. Let $\varphi$ be a substitution of length $k$. Let $\pi\colon X_{\varphi}\to Y$ be a factor map onto some subshift $Y$. The following hold.\begin{enumerate}
\item  A sequence $y\in Y$ is $k$-automatic if and only if $y=\pi(x)$ for some quasi-fixed point $x$ of $\varphi$.
\item  If $y\in Y$ is $k$-automatic and nonperiodic, then all  points in $\pi^{-1}(y)$ are quasi-fixed points of $\varphi$.
\end{enumerate}
Furthermore, if the substitution $\varphi$ is primitive and $X_{\varphi}$ is infinite, then $k$-automatic sequences in $X_{\varphi}$ correspond precisely to the rationals in $\Z_k$ under the unique factor map $X_{\varphi}\to\Z_k$ which sends $\varphi$-periodic points to $0\in\Z_k$.\footnote{See Section \ref{sec:k-adic} for the definition of the factor map.} 
\end{introtheorem}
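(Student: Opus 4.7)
My plan is to establish the two directions of (i) separately, derive (ii) as a by-product of the converse, and then read off the final statement about rationals in $\Z_k$ from the same analysis. For the easy direction---that a quasi-fixed point $x=T^{c}\varphi^{m}(x)$ maps to a $k$-automatic sequence under $\pi$---I plan to realise $x$ as the coded image of a fixed point of a length-$k^m$ substitution on a suitably decorated alphabet (recording the position of each letter inside the corresponding $\varphi^m$-block, with the shift $T^c$ absorbed into the starting data). This exhibits $x$ as $k^m$-automatic, hence $k$-automatic by Fagnot's theorem, and since factor maps between subshifts are sliding block codes, $\pi(x)$ remains $k$-automatic.

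The heart of the proof is the converse. Fix a $k$-automatic nonperiodic $y\in Y$ and an arbitrary $x\in\pi^{-1}(y)$; the goal is to show $x=T^{c}\varphi^{m}(x)$ for some $m\geq 1$ and $c\in\Z$. I will first treat the case where $\varphi$ is primitive and $X_{\varphi}$ is infinite, where Dekking's structure theory provides a unique factor map $\pi_{0}\colon X_{\varphi}\to\Z_{k}$ intertwining $T$ with $z\mapsto z+1$ and $\varphi$ with $z\mapsto kz$. The argument then pivots on two facts: (a) a point $x\in X_\varphi$ is quasi-fixed under $\varphi$ if and only if $\pi_{0}(x)\in\Q\cap\Z_{k}$; (b) the fibres of $\pi_{0}$ are uniformly bounded. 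Fact (a) is arithmetic: from $x=T^{c}\varphi^{m}(x)$ one gets $(1-k^{m})\pi_{0}(x)=c$, so $\pi_{0}(x)$ is rational with denominator coprime to $k$; conversely, any rational $p/q\in\Z_{k}$ satisfies $p/q=k^{m}(p/q)+c$ for suitable $m$ and $c$, so $T^{-c}\varphi^{m}$ permutes the finite fibre $\pi_{0}^{-1}(p/q)$ and some power of it fixes $x$. Fact (b) is Dekking's bounded-fibres theorem for the maximal equicontinuous factor of a primitive constant-length substitution. What remains is to show that $k$-automaticity of $y$ forces $\pi_{0}(x)\in\Q$; I plan to do this by writing $y=\sigma(\tilde y)$ for $\tilde y$ a fixed point of some length-$k$ substitution $\psi$ with coding $\sigma$, identifying the image of $\tilde y$ in the corresponding $\Z_k$-factor of $\overline{\mathcal{O}(\tilde y)}$ as a rational point, and transporting this rationality back across $\pi$ via a fibre-product argument between $X_{\varphi}$ and $X_{\psi}$ over $\overline{\mathcal{O}(y)}$. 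Part (ii) is then immediate, since the argument was applied to an arbitrary preimage.

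The general, nonprimitive case will be reduced to the primitive one by passing, for $N$ large enough, to a primitive subsystem of $X_{\varphi^{N}}$ carrying the orbit of $x$: nonperiodicity of $y$ will guarantee that $\pi$ restricts to a surjection of this subsystem onto $\overline{\mathcal{O}(y)}$, and Fagnot's theorem keeps the reduction within length $k$. I expect this reduction to be the main technical obstacle, as it requires careful accounting of how quasi-fixedness interacts with letters lying outside the primitive core and of how $\pi$ respects primitive components; the structural results on nonminimal automatic systems developed in the body of the paper will supply the necessary bookkeeping. Once the reduction is in place, the rationals-in-$\Z_{k}$ statement for primitive $\varphi$ with $X_{\varphi}$ infinite follows as a direct restatement of fact (a) combined with the rationality conclusion established in the previous paragraph.
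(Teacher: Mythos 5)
Your easy direction is fine, and in the primitive aperiodic case your plan is essentially the route of \cite{Coven2015} recalled in Remark \ref{rem:conjugacyproblemsviaquasifixed}: use the equicontinuous factor $\kappa\colon X_{\varphi}\to\Z_k$, identify quasi-fixedness of $x$ with rationality of $\kappa(x)$ (Corollary \ref{cor:automaticcorrespondtorational}), and combine with bounded fibres. The genuine gap is your reduction of the general case to the primitive one. You propose to pass, for large $N$, to a primitive subsystem of $X_{\varphi^{N}}$ ``carrying the orbit of $x$'', but no such subsystem need exist: by Theorem \ref{thm:subsystems}\eqref{thm:subsystems3} the orbit closure of a point of $X_{\varphi}$ is in general of the form $X_{\varphi,b}\cup X_{\varphi,a}\cup \mathcal{O}({}^{\omega}\!\varphi(b).\varphi^{\omega}(a))$, which is transitive but not minimal, and the quasi-fixed points one most needs to detect (the heteroclinic fixed points ${}^{\omega}\!\varphi(b).\varphi^{\omega}(a)$ with $a$ and $b$ in different primitive components, and the points $(\varphi^m)^{\omega,i}(a)$ of Proposition \ref{prop:char_quasi_periodic_one}\eqref{prop:char_quasi_periodic_one2} in nonprimitive systems) live precisely in this nonminimal glue. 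Worse, both of your pivot facts fail there: the fixed point ${}^{\omega}\!\vartheta(\clubsuit).\vartheta^{\omega}(\clubsuit)$ of Example \ref{ex:automaticsystemnotsubstitutive} is nonperiodic, has nonperiodic automatic image, and generates a transitive aperiodic automatic system that admits \emph{no} factor map onto $\Z_k$, so there is no arithmetic invariant $\pi_0(x)$ whose rationality you could test. This is exactly the obstruction the introduction flags when explaining why the arguments of \cite{Coven2015} and \cite{HZ-01} rely on minimality.

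The paper's proof avoids $\Z_k$ entirely and works with intrinsic desubstitution data: recognizability yields $\bm{R_{\varphi}}(x)=(x^i)_i$ and $\bm{c_{\varphi}}(x)$; finiteness of the $k$-kernel of $y$, the finite family of column maps $\F_{\varphi,1}$ (after passing to an ambi-idempotent, column-constant power and reducing $\pi$ to a coding via higher block presentations), and a bounded-fibre lemma proved with the Critical Factorisation Theorem (Lemma \ref{lem:finitely_many_representations}) give, by pigeonhole, indices $p<q$ with $x^{p}=x^{q}$; ultimate periodicity of $\bm{R_{\varphi}}(x)$ is equivalent to quasi-fixedness by Proposition \ref{prop:gen_points_are_automatic}\eqref{prop:gen_points_are_automatic3}. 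To salvage your argument you would have to replace ``$\kappa(x)\in\Q$'' by ``$\bm{c_{\varphi}}(x)$ is ultimately periodic'' and reprove your fact (a) in that language, which is in effect what the paper does. Separately, even in the primitive case your step ``transport rationality back across $\pi$ via a fibre product'' silently uses that the normalized factor maps to $\Z_k$ from $X_{\varphi}$ and from $X_{\psi}$ differ on $Y$ by a \emph{rational} translation; that is the substance of \cite[Thm.\ 3.19]{Coven2015} and is not automatic.
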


	Letting the factor map $\pi$ be a coding in Theorem \ref{introthm:characteraytomaticsequences}  treats the case of automatic systems; letting furthermore the coding $\pi$ be the identity treats the case of purely automatic systems $X_{\varphi}$. 
	The same statement is true if one works with one-sided systems instead of two-sided; the one-sided case is treated in the appendix at the end of the paper.

	We believe that the first part of Theorem \ref{introthm:characteraytomaticsequences} holds for general (nonconstant length) substitutive systems (see Conjecture \ref{con:characteraytomaticsequences}). 
	Indeed, it is true in the case of minimal substitutive systems; it can be deduced from \cite{HZ-01} and Section \ref{sec:factors} (see Remark \ref{rem:holton}).
	The last statement in Theorem  \ref{introthm:characteraytomaticsequences} can  be deduced from \cite{Coven2015} (see Remark \ref{rem:conjugacyproblemsviaquasifixed}).
	Both approaches rely heavily on the minimality of the system. 
	The proof in \cite{HZ-01} uses the characterisation of primitive substitutive sequences in terms of derived sequences and return words due to Durand \cite{D-derived}.  However, these methods do not  to work well in the nonminimal case when the sets of return words are infinite.
	In \cite{Coven2015} the existence of $\Z_k$ as an equicontinous factor of a minimal nonperiodic $k$-automatic system is crucial, a fact which no longer holds if the system is not minimal (see Example \ref{ex:automaticsystemnotsubstitutive}).

\section*{Acknowledgements} We thank Reem Yassawi for her comments and bringing some references to our attention as well as Maik Gr\"oger for his comments and support.
	 This research was funded in whole or in part by National Science Center, Poland under grant no.\ UMO2021/41/N/ST1/04295. For the purpose of Open Access, the author has applied a CC-BY public copyright licence to any
Author Accepted Manuscript (AAM) version arising from this submission.

\section{Substitutive and automatic systems}\label{sec:preliminaries}
	
	In this section we present some preliminary definitions and lemmas which will be used throughout the rest of the paper. 	
	Since there is no systematic study of general nonminimal substitutive systems yet, we give some proofs extending the facts known in the minimal case to the general set-up considered by us.

\subsection{Words and substitutions}\label{sec:words}  Let $\mathcal{A}$ be an \emph{alphabet}, that is a finite set of symbols.
	 We denote by $\mathcal{A}^*$ the set of finite words over $\mathcal{A}$. This is a monoid under concatenation; the empty word is denoted by $\epsilon$.
	  We denote by $\mathcal{A}^{\mathbf{N}}$ the set of infinite sequences over $\mathcal{A}$, where $\mathbf{N}=\{0,1,2,\ldots\}$ stands for the set of nonnegative integers, and by $\mathcal{A}^{\mathbf{Z}}$ the set of biinfinite sequences. 
	  A one-sided or two-sided sequence $x=(x_n)_n$ is called \emph{periodic} if $x_{n}=x_{n+p}$ for some $p\geq 1$ and all $n$. 
	  A one-sided sequence $x=(x_n)_{n\in\N}$ is called \emph{ultimately periodic} if the sequence $(x_{m+n})_{n\in\N}$ is periodic for some $m\geq 0$.
	   For a word $u\in\mathcal{A}^*$ we denote by $|u|$ the length of $u$. All finite words  are indexed starting at $0$. We say that a finite word $w$ is a \emph{factor} of a sequence or a finite word $x$ if $w$ appears somewhere in $x$, that is, 
\[w=x_ix_{i+1}\dots x_{i+t-1},
\]
for some $i$, where $t=\abs{w}$. 
	We let $\mathcal{L}(x)$ (resp.\ $\mathcal{L}^r(x)$) denote the the \emph{language} (resp.\ $r$-\emph{language}) of $x$ consisting of all finite words (resp.\ all words of length $r$) appearing in $x$.

Let $\mathcal{A}$ and $\mathcal{B}$ be alphabets. A \textit{morphism} is a map $\varphi\colon \mathcal{A}\rightarrow \mathcal{B}^*$ that assigns to each letter $a\in\mathcal{A}$ some finite word $w$ in $\mathcal{B}^*$. A morphism  $\varphi$ is \textit{of (constant) length} $k$ if $|\varphi(a)|=k$ for each $a\in \mathcal{A}$.  A \textit{coding} is a morphism of constant length 1, i.e.\ an arbitrary map $\tau\colon \mathcal{A}\rightarrow \mathcal{B}$. If $\mathcal{A}=\mathcal{B}$, we refer to any morphism  $\varphi$ as \textit{substitution}. We will always assume that substitutions are \textit{growing}, i.e.\ $|\varphi^n(a)|$ tends to infinity as $n\to\infty$ for all $a\in\mathcal{A}$.

 A letter $a\in \mathcal{A}$ is \textit{right-prolongable} (w.r.t.\ the substitution $\varphi$) if $a$ is the initial letter of $\varphi(a)$. If $a$ is right-prolongable, then the sequence $\varphi^n(a)$ converges to a sequence in $\mathcal{A}^{\mathbf{N}}$  which we denote by $\varphi^{\omega}(a)$.
	Similarly, a letter $a\in\mathcal{A}$ is \textit{left-prolongable} (w.r.t. the substitution $\varphi$) if $a$ is the final letter of $\varphi(a)$; $\varphi^n(a)$ converges then to a left-infinite  sequence which we denote by  ${}^{\omega}\!\varphi(a)$ (see also Definition \ref{def:quasifixedpointexplicitformula} later).

A morphism $\varphi\colon \mathcal{A}\rightarrow \mathcal{B}^*$ induces natural maps $\varphi\colon \mathcal{A}^{\mathbf{N}}\to \mathcal{B}^{\mathbf{N}}$ and $\varphi\colon \mathcal{A}^{\mathbf{Z}}\to \mathcal{B}^{\mathbf{Z}}$; in the latter case, the map is given by the formula
\[\varphi(\ldots z_{-1}.z_{0}\ldots)=\ldots \varphi(z_{-1}).\varphi(z_0)\ldots,\] where the dot indicates the 0th position.  For a substitution $\varphi\colon \mathcal{A}\rightarrow \mathcal{A}^*$, a sequence $z$ in $\mathcal{A}^{\mathbf{N}}$ or in $\mathcal{A}^{\mathbf{Z}}$ is called a $\varphi$-\textit{periodic point} if $\varphi^n(z)=z$ for some $n\geq 1$. If one can take $n=1$, then $z$ is a \textit{fixed point of} $\varphi$. Since $\varphi$ is assumed to be growing, all one-sided fixed points of $\varphi$ are given by $\varphi^{\omega}(a)$ for some right-prolongable letter $a$, and all two-sided fixed points of $\varphi$ are given by ${}^{\omega}\!\varphi(b).\varphi^{\omega}(a)$ for some  right-prolongable letter $a$ and left-prolongable letter $b$.

Let $X$ be a set. Recall that a map $f\colon X\to X$ is \textit{idempotent} if $f^2=f$. If $X$ is a finite set, then for any map $f\colon X\to X$, its $r$th iterate $f^r$ is idempotent, where $r=|X|!$ (see e.g.\ \cite[Lemma 1.7]{BKK}). 

\begin{definition}\label{def:ambiidempotent}
We say that a substitution $\varphi\colon \mathcal{A}\rightarrow \mathcal{A}^*$ is \textit{ambi-idempotent} if for all $a\in \mathcal{A}$, the first letter of $\varphi(a)$ is right-prolongable and the last letter of $\varphi(a)$ is left-prolongable, i.e.\ the maps \[F_{\varphi}\colon\mathcal{A}\to\mathcal{A},\ a\mapsto\varphi(a)_0 \quad \text{and}\quad G_{\varphi}\colon \mathcal{A}\to \mathcal{A},\ a\mapsto \varphi(a)_{|\varphi(a)|-1}\] are idempotent.
\end{definition}

For an ambi-idempotent substitution  $\varphi\colon \mathcal{A}\rightarrow \mathcal{A}^*$ and  $n\geq 1$, $a\in\mathcal{A}$ is right-prolongable (resp.\ left-prolongable)  with respect to $\varphi^n$ if and only if $a$ is  right-prolongable (resp.\ left-prolongable) with respect to $\varphi$. Hence, for an ambi-idempotent substitution  $\varphi$, all  $\varphi$-periodic points are fixed points of $\varphi$.
	Any substitution has some power that is ambi-idempotent.

\begin{lemma}\label{lem:idempotent} Let  $\varphi\colon \mathcal{A}\rightarrow \mathcal{A}^*$ be a substitution. Let $r=|\mathcal{A}|!$. The substitution $\varphi^r$ is ambi-idempotent.
\end{lemma}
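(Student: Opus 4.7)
The plan is to reduce the statement to the elementary fact recalled just before the lemma: any self-map of a finite set of size $m$ becomes idempotent after $m!$ iterations. To do this I would unpack the definition of ambi-idempotent in a purely combinatorial way and verify that the ``first letter'' and ``last letter'' operations interact nicely with composition of substitutions.

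First I would note that being ambi-idempotent is, by construction, exactly the condition that the two maps $F_\varphi, G_\varphi\colon\mathcal{A}\to\mathcal{A}$ (sending $a$ to the first, respectively last, letter of $\varphi(a)$) are idempotent. Indeed, $F_\varphi(F_\varphi(a))=F_\varphi(a)$ says that the first letter of $\varphi(a)$ is the first letter of its own $\varphi$-image, i.e.\ it is right-prolongable; symmetrically for $G_\varphi$.

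Next, the key observation is the compatibility
\[
F_{\varphi^n}=F_\varphi^{\,n},\qquad G_{\varphi^n}=G_\varphi^{\,n}\qquad\text{for all }n\geq 1.
\]
This is a short induction: writing $\varphi^{n}(a)=\varphi(\varphi^{n-1}(a))$ and applying $\varphi$ letter by letter, the first (resp.\ last) letter of $\varphi^n(a)$ is the first (resp.\ last) letter of the $\varphi$-image of the first (resp.\ last) letter of $\varphi^{n-1}(a)$, which gives $F_{\varphi^n}=F_\varphi\circ F_{\varphi^{n-1}}$ and similarly for $G$.

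Finally, I would apply the cited fact \cite[Lemma 1.7]{BKK}: since $F_\varphi$ and $G_\varphi$ are self-maps of the finite set $\mathcal{A}$ of cardinality $|\mathcal{A}|$, their $r$th iterates with $r=|\mathcal{A}|!$ are idempotent. Combining with the compatibility formulas gives that $F_{\varphi^r}=F_\varphi^{\,r}$ and $G_{\varphi^r}=G_\varphi^{\,r}$ are both idempotent, which is the definition of $\varphi^r$ being ambi-idempotent. There is no real obstacle here; the only mildly nontrivial point is the compatibility of taking first/last letters with composition, and that is immediate from the definitions.
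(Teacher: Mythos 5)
Your proof is correct and follows the same route as the paper: both establish the compatibility $F_{\varphi^n}=F_\varphi^{\,n}$ and $G_{\varphi^n}=G_\varphi^{\,n}$ and then invoke \cite[Lemma 1.7]{BKK} to conclude that the $r$th iterates are idempotent. The extra detail you supply (the induction for the compatibility and the unpacking of the definition) is exactly what the paper leaves implicit.
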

\begin{proof}  It is enough to note that $F^n_{\varphi}=F_{\varphi^{n}}$ and $G^n_{\varphi}=G_{\varphi^{n}}$ for all $n\geq 1$, and that both functions $F_{\varphi}^r$ and $G_{\varphi}^r$ are idempotent, e.g.\ by \cite[Lemma 1.7]{BKK}.
\end{proof}

\subsection{Substitutive sequences}\label{sec:subsseq}

Let $k\geq 2$. A (one-sided or two-sided) fixed point of a substitution (resp.\  substitution of constant length $k$) is called a \textit{purely substitutive} (resp.\  \textit{purely}  $k$-\textit{automatic}) sequence.
	 A sequence is  \textit{substitutive} (resp.\ $k$-\textit{automatic}) if it can be obtained as the image of a purely substitutive (resp.\ purely $k$-automatic) sequence under a coding. We will also say that a left-infinite sequence $(x_n)_{n<0}$ is substitutive (resp.\ $k$-automatic) if the  right-infinite sequence $(x_{-n-1})_{n\geq 0}$ is substitutive (resp.\ $k$-automatic).

 We will frequently use the following closure properties of substitutive sequences; unless otherwise specified the claims hold for both one-sided and two-sides sequences.

\begin{lemma}\label{lem:closure}
\begin{enumerate}
\item\label{lem:closure1}\cite[Lem.\ 2.10]{BKK} A two-sided sequence $x$ is substitutive (resp.\ $k$-automatic) if and only if  the one-sided sequences $(x_n)_{n\geq 0}$ and $(x_n)_{n<0}$ are substitutive (resp.\ $k$-automatic).
\item\label{lem:closure2} \cite[Thm.\ 7.6.1, Thm.\ 7.6.3 and Cor.\ 6.8.5]{AlloucheShallit-book}  The sets of substitutive (resp.\ $k$-automatic) sequences are closed under the left and right shifts.
\item\label{lem:closure4} \cite[Thm.\ 5.4.2]{AlloucheShallit-book} All ultimately periodic one-sided sequences are $k$-automatic with respect to any $k\geq 2$, and thus all two-sided periodic sequences are $k$-automatic for any $k\geq 2$.
\item\label{lem:closure3} \cite[Thm.\ 6.6.4]{AlloucheShallit-book}  For each $n\geq 1$, a sequence $x$ is $k$-automatic if and only if it is $k^n$-automatic. 
\item\label{lem:closure5} \cite{Cobham72}, \cite[Thm.\ 6.6.2]{AlloucheShallit-book} A sequence $x=(x_n)_n$ is $k$-automatic if and only if its $k$-\emph{kernel}
\[\mathrm{K}_k(x)=\{ (x_{k^{m}n+i})_{n} \mid m\geq 0, 0\leq i\leq k^m-1\}\]
is finite. 
\end{enumerate}
\end{lemma}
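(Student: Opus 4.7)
The plan is to verify these classical facts organised around the kernel characterisation in (v), from which (ii)--(iv) follow by direct manipulation. For (v), if $x=\varphi^{\omega}(a)$ is purely $k$-automatic with $|\varphi(b)|=k$ for all $b\in\A$, then the identity $\varphi(x)=x$ gives $x_{kn+i}=\varphi(x_n)_i$ for $0\leq i<k$; by induction every element of $\mathrm{K}_k(x)$ has the form $(\tau(x_n))_n$ for some composition $\tau\colon\A\to\A$ of the $k$ letter-selecting maps $\sigma_i(b)=\varphi(b)_i$, so $\mathrm{K}_k(x)$ is finite, and codings preserve this finiteness. For the converse, given $y$ with finite $k$-kernel, the standard construction on the states $\mathrm{K}_k(y)$ with transitions $z\mapsto \sigma_i^{(k)}(z):=(z_{kn+i})_n$ and output $z\mapsto z_0$ yields a length-$k$ substitution on these states together with a coding whose composition produces $y$.

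For (iv), the inclusion $\mathrm{K}_{k^n}(x)\subseteq \mathrm{K}_k(x)$ handles one direction; for the other, every element of $\mathrm{K}_k(x)$ is $\sigma_{i_1}^{(k)}\cdots\sigma_{i_r}^{(k)}(x)$ for some $r,i_j$, and regrouping the operations in blocks of $n$ shows it lies in the closure of $\mathrm{K}_{k^n}(x)$ under at most $n-1$ more applications of $\sigma_i^{(k)}$'s, giving $|\mathrm{K}_k(x)|\leq|\mathrm{K}_{k^n}(x)|\cdot\tfrac{k^n-1}{k-1}$. For (iii), a sequence of period $p$ has a $k$-kernel contained in its shifts, of which there are at most $p$; an ultimately periodic sequence contributes a bounded number of extra preperiodic kernel elements. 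For (ii), a direct computation shows $\sigma_i^{(k)}(T^{q} z)=T^{\lfloor(i+q)/k\rfloor}\sigma_{(i+q)\bmod k}^{(k)}(z)$; starting from $Tx=T^{1} x$ and iterating, shifts stay in $\{0,1\}$, so $\mathrm{K}_k(Tx)\subseteq\mathrm{K}_k(x)\cup T\mathrm{K}_k(x)$ remains finite.

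The main obstacle is (i), whose converse requires merging two independently given substitutions into one. The forward direction is immediate: writing $x=\tau(y)$ for $y$ a fixed point of some substitution $\varphi$, the form $y={}^{\omega}\!\varphi(b).\varphi^{\omega}(a)$ shows both one-sided restrictions are substitutive. For the converse, given $(x_n)_{n\geq 0}=\tau_+(\varphi_+^{\omega}(a_+))$ and $(x_{-n-1})_{n\geq 0}=\tau_-(\varphi_-^{\omega}(a_-))$, I would pass to the disjoint union alphabet $\A_+\sqcup\A_-$ and define $\psi=\varphi_+$ on $\A_+$ and $\psi(b)=\varphi_-(b)^{R}$ (word reversal) on $\A_-$. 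Reversal makes $a_-$ left-prolongable for $\psi$ while preserving constant length, and $a_+$ remains right-prolongable; a direct induction gives $\psi^n(a_-)=\varphi_-^n(a_-)^{R}$, so the two-sided fixed point ${}^{\omega}\!\psi(a_-).\psi^{\omega}(a_+)$ of $\psi$ codes via $\tau_+\sqcup\tau_-$ to $x$. The $k$-automatic case of (i) is identical since word reversal preserves constant length $k$.
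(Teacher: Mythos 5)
Your proposal is far more self-contained than the paper's own treatment: the paper simply cites the literature for items (ii)--(v) and for the converse of (i) (deferring to \cite[Lem.\ 2.10]{BKK}), proving only the forward direction of (i) directly. Your kernel computations for (iv) and (v), the identity $\sigma_i^{(k)}(T^{q}z)=T^{\lfloor(i+q)/k\rfloor}\sigma_{(i+q)\bmod k}^{(k)}(z)$ for the automatic half of (ii), and the merge-by-reversal construction for the converse of (i) are all correct; the last of these is essentially the standard argument behind the cited lemma of \cite{BKK}, and your verification that $\psi^n(a_-)=\varphi_-^n(a_-)^{R}$ makes $a_-$ left-prolongable is exactly right.

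There are, however, two genuine problems. First, in (iii) the claim that the $k$-kernel of a $p$-periodic sequence is contained in its set of shifts is false: for $x=(01)^{\infty}$ and $k=2$ the kernel element $(x_{2j})_j$ is the constant sequence, not a shift of $x$. The conclusion survives because each kernel element $(x_{k^m j+i})_j$ is itself $p$-periodic and determined by the pair $(k^m\bmod p,\ i\bmod p)$, so the kernel has at most $p^2$ elements; but the argument as written does not establish this. Second, and more seriously, your plan derives (ii) entirely from the kernel characterisation, which only applies to $k$-automatic sequences. The statement also asserts closure under shifts for general \emph{substitutive} sequences (images of fixed points of non-constant-length substitutions), for which there is no kernel; this is the content of \cite[Thm.\ 7.6.1 and 7.6.3]{AlloucheShallit-book} and requires an actual construction (absorbing or deleting a prefix into a new fixed point), not a decimation computation. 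A smaller point of the same flavour: in (v), passing from ``finite kernel'' to ``coding of a fixed point of a length-$k$ substitution'' via the transition maps $z\mapsto\sigma_i^{(k)}(z)$ quietly crosses from the least-significant-digit-first automaton (which is what the kernel gives you) to the most-significant-digit-first one (which is what yields a substitution); that conversion is classical but is a step, not a tautology.
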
 
\begin{proof}
To see the first claim assume that $x$ is substitutive. Then $x=\tau({}^{\omega}\!\varphi(b)).\tau(\varphi^{\omega}(a))$ for some substitution $\varphi\colon\mathcal{A}\to\mathcal{A}^*$, coding $\tau\colon\mathcal{A}\to\mathcal{B}$, right-prolongable letter $a$, and left-prolongable letter $b$; furthermore if $x$ is $k$-automatic, then $\varphi$ can be taken to be of constant length $k$. Hence, the one-sided sequences  $(x_n)_{n\geq 0}$ and $(x_n)_{n<0}$ are substitutive, and they are $k$-automatic if $x$ is $k$-automatic. The other implication follows from (the proof of) \cite[Lemma 2.10]{BKK}.\footnote{Note that in \cite{BKK} a two-sided substitutive sequence is \emph{defined as} a two-sided sequence $x$ such that the one-sided sequences $(x_n)_{n\geq 0}$ and $(x_n)_{n<0}$ are substitutive.}

	The references provided in claims \eqref{lem:closure2}--\eqref{lem:closure5} all treat one-sided sequences. However, the claims for two-sided sequences follow directly from the corresponding statements for one-sided sequences and \eqref{lem:closure1}.
\end{proof}

\subsection{Topological dynamics}\label{sec:dynamics}

\textit{A (topological) dynamical system} is a compact metrisable space $X$ together with a continuous map $T\colon X\rightarrow X$. If $T$ is a homeomorphism, we say that $(X,T)$ is an \textit{invertible dynamical system}. A point $x\in X$ is \textit{periodic} if $T^k(x)=x$ for some $k\geq 1$.  A dynamical system is called \textit{aperiodic} if it does not contain any periodic points. 
	If $T\colon X\to X$ is a homeomorphism, we let $\mathcal{O}(x)=\{T^n(x)\mid n\in \Z\}$ denote the two-sided orbit of a point $x\in X$.

A \textit{subsystem} of $X$ is a closed subset $Y$ of $X$ that is invariant under the map $T$, i.e.\ $T(Y)\subset Y$. A system $X$ is \textit{minimal} if $X \neq \emptyset$ and if $X$ has no subsystems other than $X$ and the empty set. Equivalently, a system $X\neq \emptyset$ is minimal if the orbit of every point is dense in $X$ \cite[Prop.\ 2.7]{bookKurka}. An invertible system $X$ is  \textit{transitive} if there exists $x\in X$ such that $\overline{\mathcal{O}(x)}=X$.

A dynamical system $(Y,S)$ is a \textit{(topological) factor} of the system $(X,T)$ if there exists a continuous surjective map $\pi\colon X\rightarrow Y$  such that $\pi\circ T=S\circ \pi$. Such a map $\pi$ is called \textit{a factor map}. Two dynamical systems  $(X,T)$ and $(Y,S)$ are \textit{conjugate} (or \textit{isomorphic}) if there exists a homeomorphism $\pi\colon X\rightarrow Y$  such that $\pi\circ T=S\circ \pi$; in this case we call the map $\pi$ a \emph{conjugacy}.

A dynamical system $T\colon X\rightarrow X$ is \textit{equicontinuous} if the family of maps 
\[\{T^n\colon X \rightarrow X \mid n\geq 0\}\]
 is equicontinuous. By classical results,  every dynamical system has a maximal equicontinuous factor \cite[Thm.\ 2.44]{bookKurka} and  every minimal equicontinuous system is conjugate to a translation on a compact abelian group \cite[Thm.\ 2.42]{bookKurka}.

	In this paper we will be mainly concerned with symbolic systems coming from finitely-valued sequences.
	For an alphabet $\mathcal{A}$, the set $\mathcal{A}^{\mathbf{Z}}$ with the product topology (where we use discrete topology on each copy of $\mathcal{A}$) is a compact metrisable space.
	The dynamics on $\A^{\Z}$ is given by the \emph{shift map} 
	\[T\colon \mathcal{A}^{\mathbf{Z}}\rightarrow \mathcal{A}^{\mathbf{Z}} \quad T((x_n)_n)= (x_{n+1})_n,
	\]
	 which acts by shifting the sequence one index to the left. 	
	 	The space $\mathcal{A}^{\mathbf{Z}}$ together with the shift map $T$ is an invertible dynamical system. We refer to subsystems of $\mathcal{A}^{\mathbf{Z}}$ as \textit{subshifts}.

For a subshift $X$, we let
\[\mathcal{L}(X) = \bigcup\{\mathcal{L}(x)\mid x\in X\} \quad \text{and}\quad \mathcal{L}^r(X)=\bigcup\{\mathcal{L}^r(x)\mid x\in X\}
\]  denote the \textit{language} and $r$-\emph{language} of $X$.
	Each subshift is uniquely determined by its language \cite[Prop.\ 3.17]{bookKurka}; in particular, two subshifts are equal if and only if their languages coincide. The above definitions can be adapted in a straightforward way for one-sided  subshifts $X\subset \mathcal{A}^{\N}$.

\subsection{Substitutive systems}\label{sec:substsystems}
 Let $\varphi\colon\mathcal{A}\rightarrow\mathcal{A}^*$ be a substitution. We let
\[X_{\varphi}=\{x\in\mathcal{A}^{\mathbf{Z}}\mid \textrm{ every factor of } x \textrm{ appears in } \varphi^n(a) \textrm{ for some } a\in\mathcal{A},\ n\geq 0\}\]
denote the system generated by $\varphi$.   Note that a two-sided fixed point of $\varphi$ need not lie in $X_{\varphi}$; indeed a point ${}^{\omega}\!\varphi(b).\varphi^{\omega}(a)$ lies in $X_{\varphi}$ if and only if $ba\in\mathcal{L}(X_{\varphi})$.  A  system $X_{\varphi}$  is minimal if and only if $\varphi$ is \emph{primitive}, that is, there is $n\in\N$ such that for all $a,b\in\A$, $a$ appears in $\varphi^n(b)$ \cite[Prop.\ 5.5]{Queffelec-book}.

\begin{remark}\label{rem:languageofsubsystems}
	In general, the set of letters $\mathcal{L}^1(X_{\varphi})$ appearing in the language of $X_{\varphi}$ may be different than $\A$, consider e.g.\ the substitution
	\[\varphi(0)=12,\quad \varphi(1)=22,\quad \varphi(2)=11.
\] on the alphabet $\A=\{0,1,2\}$.
	Thus, in general,
	\[\mathcal{L}(X_{\varphi})\neq \mathcal{L}(\{ \varphi^n(a)\mid a\in \mathcal{L}^1(X_{\varphi}),\ n\geq 0\},
	\]
	where for a set of words $W$, $\mathcal{L}(W)$ denotes the set of all factors of $w\in W$.
	However, due to the fact that $\varphi$ is growing, we always have
	\[\mathcal{L}(X_{\varphi}) = \mathcal{L}(\{ \varphi^n(ab)\mid ab\in \mathcal{L}^2(X_{\varphi}),\ n\geq 0\}.
	\]
\end{remark}

\begin{remark}\label{rem:whenpowersystemsagree}
	Clearly,  $X_{\varphi^{n}}\subset X_{\varphi}$ for all $n\geq 1$. In general, the equality does not need to hold, consider e.g. the system generated by the substitution $\varphi$ in Example \ref{rem:languageofsubsystems} (see also \cite[Rem.\ 1.5]{BKK}).
	However one has the equality under some very general assumption e.g.\ when  $X_{\varphi}$ is transitive \cite[Lemma 2.12]{BKK} or when each letter from $\A$ appears in $\varphi(b)$ for some $b\in \A$ \cite[Lem.\ 5.3]{BPR23}.
\end{remark}

	We will call a subshift $X$ \emph{substitutive} (resp.\ \emph{$k$-automatic}) if it is of the form $X=\tau(X_{\varphi})$ for some  substitution (resp.\ substitution  of length $k$) $\varphi\colon\mathcal{A}\rightarrow\mathcal{A}^*$ and coding $\tau\colon \A\to\B$.
	If one can furthermore take the coding $\tau$ to be the identity, then we will call $X$ \emph{purely substitutive} (resp.\ \emph{purely $k$-automatic}). 
	Note that the coding $\tau$ here defines a factor map from $X_{\varphi}$ to $X$.
	
	It is straightforward to see that the orbit closure $\overline{\mathcal{O}(x)}$ of any substitutive (resp.\ $k$-automatic) sequence $x$ is a substitutive (resp.\ $k$-automatic) system given by the same substitution and coding that define the sequence $x$.
	Conversely, if a substitutive (resp.\ $k$-automatic) system is transitive, then it arises as an orbit closure of a substitutive (resp.\ $k$-automatic) sequence \cite[Lem.\ 2.10]{BKK}.

\subsection{Subsystems of substitutive systems}\label{sec:subsystems}
Let $\varphi\colon \mathcal{A}\rightarrow\mathcal{A}^*$ be a substitution. For $b\in\mathcal{A}$ let $\mathcal{A}_b$ denote the set of letters that appear in $\varphi^n(b)$ for some $n\geq 0$. Note that $\varphi$ maps $\mathcal{A}_b$ to $\mathcal{A}^*_b$. Let $X_{\varphi,b}$ denote the subsystem of $X_{\varphi}$ generated by the substitution $\varphi|_{\mathcal{A}_{b}}\colon \mathcal{A}_b\to\mathcal{A}^*_b$, that is,
 \[X_{\varphi,b}=\{x\in\mathcal{A}^{\mathbf{Z}}\mid \textrm{ every factor of } x \textrm{ appears in } \varphi^n(b) \textrm{ for some }  n\geq 0\}.\]

Theorem \ref{thm:subsystems}  below gathers the results about subsystems of substitutive systems that we will need.

\begin{theorem}\label{thm:subsystems}  Let $\varphi\colon\mathcal{A}\rightarrow\mathcal{A}^*$ be an ambi-idempotent substitution and let $\tau\colon\A\to\B$ be a coding.
\begin{enumerate}
\item\label{thm:subsystems3} \cite[Prop.\ 2.13]{BKK}  For each transitive subsystem $Y\subset X_{\varphi}$ either  $Y=X_{\varphi,b}$ for some $b\in\mathcal{A}$, or \[Y=X_{\varphi,b}\cup X_{\varphi,a}\cup \mathcal{O}({}^{\omega}\!\varphi(b).\varphi^{\omega}(a))\] for some right-prolongable letters $a$ and left-prolongable letter $b$ such that $ba\in\mathcal{L}(X_{\varphi})$. In particular,  each subsystem $Y$ of $X_{\varphi}$ is closed under $\varphi$.
\item\label{thm:subsystems2}\cite[Prop.\ 2.2 and Lem.\ 1.1]{BKK}  All minimal subsystems of $\tau(X_{\varphi})$ are given by $\tau(X_{\varphi,b})$ for some $b\in\mathcal{A}$ such that \[\varphi'=\varphi|_{\mathcal{A}_{b}}\colon\mathcal{A}_{b}\to(\mathcal{A}_{b})^*\] is a primitive substitution. In particular, if  $\varphi$ is of constant length $k$, then $\varphi'$ is of constant length $k$. 
\end{enumerate}
\end{theorem}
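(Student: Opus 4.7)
For part (1), the plan is to write $Y = \overline{\mathcal{O}(y)}$ for some $y \in Y$ and to analyse the $\varphi$-block structure of $y$. For each $N$, I let $k_N$ be the smallest integer such that $y[-N,N]$ is a factor of $\varphi^{k_N}(b_N)$ for some letter $b_N$; since $|y[-N,N]| = 2N+1 \to \infty$ while $\max_a |\varphi^k(a)|$ is bounded at each fixed level $k$, one has $k_N \to \infty$. Minimality of $k_N$ forces $y[-N,N]$ to straddle at least one seam in the refinement $\varphi^{k_N}(b_N) = \varphi^{k_N-1}(c_1) \cdots \varphi^{k_N-1}(c_r)$, where $\varphi(b_N) = c_1 \cdots c_r$. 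Passing to a subsequence, both the position of such a seam (relative to the origin in $y$) and the two adjacent letters $c^*, c^{**}$ stabilise.

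If the limit seam position is finite, ambi-idempotence together with the convergence of $\varphi^{k_N-1}(c^*)$ on the left and $\varphi^{k_N-1}(c^{**})$ on the right forces, up to a shift, $y = {}^\omega\!\varphi(b^*).\varphi^\omega(a^*)$, with $b^* = G_\varphi(c^*)$ left-prolongable and $a^* = F_\varphi(c^{**})$ right-prolongable, and $b^* a^* \in \mathcal{L}(X_\varphi)$ since these are the two middle letters of $\varphi(c^* c^{**})$. A direct computation of the orbit closure (shifts drive the junction off to $\pm\infty$ and expose each fixed-point side) yields $Y = X_{\varphi,b^*} \cup X_{\varphi,a^*} \cup \mathcal{O}({}^\omega\!\varphi(b^*).\varphi^\omega(a^*))$. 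Otherwise, the limit seam position escapes to $\pm\infty$; then every bounded window of $y$ eventually lies inside a single $\varphi^{k_N-1}$-block, giving $\mathcal{L}(y) \subseteq \mathcal{L}(X_{\varphi,c^*})$. Choosing $b$ so that $\mathcal{A}_b$ is the smallest $\varphi$-closed sub-alphabet with $\mathcal{L}^1(y) \subseteq \mathcal{A}_b$, transitivity of $Y$ and an ambi-idempotence argument upgrade the inclusion $Y \subseteq X_{\varphi,b}$ to the equality $Y = X_{\varphi,b}$. The $\varphi$-invariance of every subsystem is then verified by direct inspection of the two resulting forms.

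For part (2), the plan is to lift: given a minimal subsystem $M$ of $\tau(X_\varphi)$, the preimage $\tau^{-1}(M) \cap X_\varphi$ is a nonempty closed $T$-invariant subset of $X_\varphi$, so Zorn's lemma provides a minimal subsystem $N$ inside it; minimality of $M$ then forces $\tau(N) = M$. Being minimal and hence transitive, $N$ must be of one of the two forms from part (1). The junction form cannot occur: in that case $X_{\varphi,a}$ and $X_{\varphi,b}$ are both nonempty (by compactness applied to two-sided limits of $\varphi^n(a)$ and $\varphi^n(b)$) and proper (by the very shape of the junction form) closed $T$-invariant subsets of $N$, contradicting minimality. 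Hence $N = X_{\varphi,b} = X_{\varphi'}$ with $\varphi' = \varphi|_{\mathcal{A}_b}$, whose minimality is equivalent to primitivity of $\varphi'$ by the classical characterisation cited immediately before the theorem. Preservation of constant length under restriction is immediate, as word lengths are unchanged.

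The main obstacle will be the escaping-seam subcase of part (1): upgrading the language-level containment $\mathcal{L}(y) \subseteq \mathcal{L}(X_{\varphi,b})$ to the system-level equality $Y = X_{\varphi,b}$ requires a canonical and minimal choice of $b$, together with a careful use of ambi-idempotence to control the letters at the boundaries of the windows $y[-N,N]$ and to rule out the possibility that $y$ generates only a proper transitive subsystem of $X_{\varphi,b}$.
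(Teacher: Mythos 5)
Your part (2) is sound (given part (1)), and the bounded-seam half of part (1) is essentially correct: ambi-idempotence does force the two boundary letters $b^*=G_{\varphi}(c^*)$ and $a^*=F_{\varphi}(c^{**})$ to be left- and right-prolongable, the suffixes of $\varphi^{k_N-1}(c^*)$ and prefixes of $\varphi^{k_N-1}(c^{**})$ do converge to ${}^{\omega}\!\varphi(b^*)$ and $\varphi^{\omega}(a^*)$, and the orbit-closure computation for the junction point is fine. Note, however, that the paper does not reprove this statement at all: it cites \cite[Prop.\ 2.13]{BKK} and only argues (a) that the proof there uses nothing beyond ambi-idempotency and (b) that closure under $\varphi$ for \emph{all} subsystems follows from the transitive case. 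So you are attempting a from-scratch proof of an imported result.

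The genuine gap is exactly where you flag it, and it is not a technicality that ``transitivity and ambi-idempotence'' will close. In the escaping-seam case you obtain only $\mathcal{L}(y)\subseteq\{\text{factors of }\varphi^n(c)\}$, i.e.\ $Y=\overline{\mathcal{O}(y)}\subseteq X_{\varphi,c}$, and the proposed upgrade to equality is false as stated. First, the smallest $\varphi$-closed sub-alphabet containing $\mathcal{L}^1(y)$ is $\bigcup_{e\in\mathcal{L}^1(y)}\mathcal{A}_e$, which need not equal $\mathcal{A}_b$ for any single letter $b$. Second, and more seriously, a transitive subsystem of $X_{\varphi,c}$ whose letters generate all of $\mathcal{A}_c$ need not equal $X_{\varphi,c}$: take $\varphi(c)=cbac$, $\varphi(b)=bb$, $\varphi(a)=aa$; then $Y=\overline{\mathcal{O}(b^{\infty}.a^{\infty})}=\{b^{\infty},a^{\infty}\}\cup\mathcal{O}(b^{\infty}.a^{\infty})$ is transitive, the smallest $\mathcal{A}_d$ containing $\mathcal{L}^1(Y)=\{a,b\}$ is $\mathcal{A}_c$, yet $Y\subsetneq X_{\varphi,c}$. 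So your escaping case does not by itself land in the first alternative of the theorem; a junction system can satisfy $Y\subseteq X_{\varphi,c}$, and your dichotomy (which depends on an arbitrary choice of occurrence of $y[-N,N]$ at the minimal level $k_N$, with no coherence between different $N$) does not separate the two alternatives. To repair this you would need either to show that in the escaping case $y$ cannot be a shift of a junction point and then run an induction on $|\mathcal{A}|$ through the strictly smaller alphabet $\mathcal{A}_c$ (handling the case $\mathcal{A}_c=\mathcal{A}$ separately), or to replace the window-by-window analysis with a coherent desubstitution $y=T^{s_n}\varphi^n(y^n)$, $0\leq s_n<|\varphi^n(y^n_0)|$, and base the dichotomy on whether $\min(s_n,|\varphi^n(y^n_0)|-s_n)$ stays bounded --- which is essentially the route of \cite{BKK}.
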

\begin{proof} 
	The first claim in \eqref{thm:subsystems3} is shown in \cite[Prop.\ 2.13]{BKK} under the assumption that $\varphi$ is idempotent, which is a somewhat different property than ambi-idempotency (see \cite[Def.\ 1.6]{BKK}). This is a leftover of the fact that the paper \cite{BKK} deals mainly with one-sided substitutive systems for which the property of idempotency is relevant. However, it is easy to see that the only properties of $\varphi$ that are actually used in the proof are those that constitute ambi-idempotency. Since all systems $X_{\varphi,b}$, $b\in\mathcal{A}$ are closed under $\varphi$, and $\varphi(\mathcal{O}({}^{\omega}\!\varphi(b).\varphi^{\omega}(a)))\subset \mathcal{O}({}^{\omega}\!\varphi(b).\varphi^{\omega}(a))$ for any right-prolongable $a$ and left-prolongable $b$,  all transitive subsystems of $X_{\varphi}$ are closed under $\varphi$. Hence, all subsystems of $X_{\varphi}$ are closed under $\varphi$.

The reference \cite[Prop.\ 2.2 and Lem.\ 1.1]{BKK} treats one-sided systems, but the proof in the two-sided case is exactly the same.
\end{proof}

\subsection{$r$-block substitutions}\label{sec:factors}

A common operation in symbolic dynamics consists of looking at  blocks of consecutive symbols and treating them  as the letters of a new, often larger, alphabet. This is done by considering the so-called higher block presentation systems.
For a sequence or a finite word $x$  and integers $i\leq j$ we write  $x_{[i,\,j)}$ for the word $x_ix_{i+1}\cdots x_{j-1}$, if this makes sense; in particular, $x_{[i,i)}=\epsilon$.
	 For a fixed alphabet $\mathcal{A}$ and integer $r\geq 1$, we define the $r$th \emph{higher block presentation} map by the formula
\begin{equation}\label{eq:block_representation}
\iota_r\colon \mathcal{A}^{\Z}\ni y\mapsto \hat{y}\in(\mathcal{A}^r)^{\Z} \quad (\hat{y}_i)_{i\in\mathbf{Z}}= (y_{[i,i+r)})_{i\in\mathbf{Z}},
\end{equation}
where $\mathcal{A}^r$ stands for the set of words of length $r$ over $\mathcal{A}$. 
	For any subshift $X\subset \mathcal{A}^{\Z}$, the map $\iota_r$ defines a conjugacy between $X$ and its $r$th \textit{higher block presentation system} $\iota_r(X)$ which is a subshift over the alphabet $\A^r$ (see \cite[Sec.\ 1.4]{bookMarcusLind} for details). We also use the same symbol to denote the  map
\begin{equation}\label{eq:block_representationwords}
\iota_r\colon \mathcal{A}^{*}\ni w\mapsto \hat{w}\in(\mathcal{A}^r)^{*} \quad  \hat{w}= w_{[0,r)}w_{[1,1+r)}\dots w_{[t-r,t)} 
\end{equation}
between words on the alphabets $\mathcal{A}$ and $\mathcal{A}_r$, where $t=\abs{w}$. Note that if $\abs{w}<r$, then $\iota_r(w)$ is the empty word. If $\abs{w}\geq r$, then $\iota_r(w)$ is a word of length $t-r+1$ over the alphabet $\mathcal{A}^r$.

The following is a standard construction, see e.g.\  \cite[Sect.\ 5.4.1]{Queffelec-book} or \cite[Sect.\ 1.4.5]{bookDurandPerrin}. It allows to express the higher block presentations of substitutive systems again as  substitutive systems (see Proposition \ref{prop:higherblocksubstitutionproperties} below).

\begin{definition}\label{def:higherblocksubstitution} Let $\varphi\colon \mathcal{A}\to\mathcal{A}^*$ be a substitution. Let $r\geq 1$ and let  
\[\widehat{\mathcal{A}_{r}}=\mathcal{L}_r(X_{\varphi})\subset \mathcal{A}^r\]
be the alphabet consisting of all words of length $r$ lying in the language of $\varphi$.  The $r$-\textit{block substitution}  induced by $\varphi$ is the substitution given by 
\[ \hat{\varphi}_r\colon \widehat{\mathcal{A}_{r}}\to\left(\widehat{\mathcal{A}_{r}}\right)^* \quad \hat{\varphi}_r(w)=\varphi(w)_{[0,r)}\ldots \varphi(w)_{[t-1,t+r-1)},
\]
where $t=|\varphi(w_0)|$,  i.e.\ $\hat{\varphi}_r(w)$ is the ordered list of the first $|\varphi(w_0)|$ factors of $\varphi(w)$ of length $r$.
\end{definition}

It is straightforward to check that for any substitution $\varphi\colon\mathcal{A}\to\mathcal{A}^*$ we have  \begin{equation}\label{eq:iotacommutes}
\iota_r(\varphi^n(x))=\hat{\varphi}_r^n(\iota_r(x))\quad \text{for}\quad x\in \mathcal{A}^{\Z}\text{ and } n\geq1,
\end{equation}
see also \cite[pages 49-50]{bookDurandPerrin}. 

The following proposition gathers the  properties of the $r$-block substitution. 

\begin{proposition}\label{prop:higherblocksubstitutionproperties}
Let $r\geq 1$. Let $\varphi\colon \mathcal{A}\to\mathcal{A}^*$ be a substitution and let $\hat{\varphi}_r$ be the induced $r$-block substitution. The following hold:\begin{enumerate}
\item\label{prop:higherblocksubstitutionproperties4} The system generated by  $\hat{\varphi}_r$ is the $r$th higher block presentation of $X_{\varphi}$.
\item\label{prop:higherblocksubstitutionproperties5} If $\varphi$ is primitive, then $\hat{\varphi}_r$ is primitive.
 If $\varphi$ is of constant length $k$, then  $\hat{\varphi}_r$ is of constant length $k$. 
\end{enumerate}
\end{proposition}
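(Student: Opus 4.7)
Both $X_{\hat{\varphi}_r}$ and $\iota_r(X_\varphi)$ are subshifts of $(\widehat{\mathcal{A}_r})^{\Z}$, so the plan for (i) is to compare their languages, using the commutation identity \eqref{eq:iotacommutes} as the main tool. For the inclusion $\mathcal{L}(X_{\hat{\varphi}_r}) \subseteq \mathcal{L}(\iota_r(X_\varphi))$: fix $\hat{c} \in \widehat{\mathcal{A}_r}$ and $n \geq 0$; since $\hat{c} \in \mathcal{L}^r(X_\varphi)$, pick $x \in X_\varphi$ with $x_{[0,r)} = \hat{c}$, so $\iota_r(x) \in \iota_r(X_\varphi)$ has $\hat{c}$ as its letter at position $0$, and by \eqref{eq:iotacommutes}, $\hat{\varphi}_r^n(\iota_r(x)) = \iota_r(\varphi^n(x)) \in \iota_r(X_\varphi)$ contains $\hat{\varphi}_r^n(\hat{c})$ as its initial factor. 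Hence this word and all its subfactors lie in $\mathcal{L}(\iota_r(X_\varphi))$.

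For the reverse inclusion, take $w \in \mathcal{L}(X_\varphi)$ with $|w| \geq r$. Extending the length-$2$ witness from Remark \ref{rem:languageofsubsystems} to length $r+1$, $w$ can be realised as a factor of $\varphi^n(v')$ for some $v' \in \mathcal{L}^{r+1}(X_\varphi)$ and $n \geq 0$, with $\iota_r(v')$ being a length-$2$ word over $\widehat{\mathcal{A}_r}$. A short induction on $n$ using the definition of $\hat{\varphi}_r$ shows that $\hat{\varphi}_r^n(\iota_r(v'))$ is a prefix of $\iota_r(\varphi^n(v'))$, long enough to contain $\iota_r(w)$ as a factor. Combining this with Remark \ref{rem:languageofsubsystems} applied to $\hat{\varphi}_r$---which reduces the defining condition for $\mathcal{L}(X_{\hat{\varphi}_r})$ to images of length-$2$ words---places $\iota_r(w)$ in $\mathcal{L}(X_{\hat{\varphi}_r})$.

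Part (ii) is direct. Primitivity: the letter of $\hat{\varphi}_r^N(\hat{b})$ at position $i$ is the length-$r$ window of $\varphi^N(\hat{b})$ starting at $i$, which lies entirely inside $\varphi^N(b_0)$ whenever $i + r \leq k^N$; primitivity of $\varphi$ provides a uniform $N_0$ such that every $\hat{a} \in \widehat{\mathcal{A}_r}$ occurs as such a window for every $b_0 \in \A$, so $\hat{a}$ appears as a letter of $\hat{\varphi}_r^{N_0}(\hat{b})$. Constant length is immediate from the definition: $|\hat{\varphi}_r(\hat{c})| = |\varphi(c_0)| = k$ whenever $\varphi$ has length $k$.

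The main technical hurdle throughout is the reverse inclusion in (i): $\hat{\varphi}_r^n(\hat{c})$ for a single letter $\hat{c} \in \widehat{\mathcal{A}_r}$ only encodes a prefix of $\iota_r(\varphi^n(\hat{c}))$, so realising a general factor of $\iota_r(X_\varphi)$ forces one to pass to length-$2$ words in $\widehat{\mathcal{A}_r}^*$ and invoke the length-$2$ formulation of the language to close the argument.
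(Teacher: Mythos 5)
Your part (ii) and the inclusion $\mathcal{L}(X_{\hat{\varphi}_r})\subseteq\mathcal{L}(\iota_r(X_{\varphi}))$ are fine; the latter, run through \eqref{eq:iotacommutes} and the closure of $X_{\varphi}$ under $\varphi$, is a clean variant of what the paper does (the paper instead extracts the underlying word $w$ from $\hat{\varphi}_r^n(v)$ and traces it back into some $\varphi^{n+m}(a)$). The problem is the reverse inclusion. You reduce it to the statement that $\iota_r(w)$ is a factor of $\hat{\varphi}_r^n(\iota_r(v'))$ for a length-$2$ word $\iota_r(v')$ over $\widehat{\mathcal{A}_r}$, and then invoke Remark \ref{rem:languageofsubsystems} applied to $\hat{\varphi}_r$ to conclude $\iota_r(w)\in\mathcal{L}(X_{\hat{\varphi}_r})$. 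But that remark only places in $\mathcal{L}(X_{\hat{\varphi}_r})$ the factors of $\hat{\varphi}_r^n(\hat{a}\hat{b})$ for $\hat{a}\hat{b}\in\mathcal{L}^2(X_{\hat{\varphi}_r})$, so you must already know that $\iota_r(v')$ lies in $\mathcal{L}^2(X_{\hat{\varphi}_r})$. That statement --- every length-$2$ word of $\mathcal{L}(\iota_r(X_{\varphi}))$ lies in $\mathcal{L}(X_{\hat{\varphi}_r})$ --- is itself an instance of the inclusion you are proving, and it is not easier: it carries exactly the boundary difficulty you identified as the ``main technical hurdle'' (the two consecutive windows $v'_{[0,r)}$ and $v'_{[1,r+1)}$ must be exhibited among the first $|\varphi^{N}(c_0)|$ windows of some $\varphi^{N}(\hat{c})$ with $\hat{c}\in\widehat{\mathcal{A}_r}$). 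As written, the argument reduces the claim to an unproved special case of itself.

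The paper closes this step without any appeal to the length-$2$ reformulation on the $\hat{\varphi}_r$ side: having located $w$ inside $\varphi^n(ab)$ with $ab\in\mathcal{L}^2(X_{\varphi})$, it exhibits $\hat{w}$ directly as a factor of $\hat{\varphi}_r^n(v)$ for a \emph{single letter} $v\in\widehat{\mathcal{A}_r}$, namely any length-$r$ word of $\mathcal{L}(X_{\varphi})$ beginning with $ab$; membership in $\mathcal{L}(X_{\hat{\varphi}_r})$ then follows from the definition of the system generated by $\hat{\varphi}_r$ alone. To repair your proof you should either follow that route (which still requires checking that the occurrence of $w$ can be chosen so that all of its $r$-windows sit among the first $|\varphi^n(v_0)|$ windows of $\varphi^n(v)$), or supply a separate direct proof of the base case $\iota_r(v')\in\mathcal{L}^2(X_{\hat{\varphi}_r})$ for every $v'\in\mathcal{L}^{r+1}(X_{\varphi})$; without one of these the chain of reductions never terminates.
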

\begin{proof}
The second claim follow very easily from the definition of the $r$-block substitution (see also \cite[Lem.\ 5.2]{Queffelec-book} and \cite[Lem.\ 5.3]{Queffelec-book}). Claim \eqref{prop:higherblocksubstitutionproperties4} is standard for primitive $\varphi$; for completeness we provide a proof in the general case. Since a subshift is uniquely determined by its language, to show  \eqref{prop:higherblocksubstitutionproperties4} it is enough to show that $\mathcal{L}(\iota_r(X_{\varphi}))=\mathcal{L}(X_{\hat{\varphi}_r})$. 
	For $r=1$,  $\iota_1(X_{\varphi})=X_{\hat{\varphi}_1}=X_{\varphi}$, so we can assume that $r\geq 2$.
	We write $\iota=\iota_r$ for an ease of notation.

Let $\hat{w}\in \mathcal{L}(\iota(X_{\varphi}))$. There exists $w\in \mathcal{L}(X_{\varphi})$ of length $|w|=|\hat{w}|+r-1$ such that $\hat{w}$ is the ordered list of factors of $w$ of length $r$. By Remark \ref{rem:languageofsubsystems}, there exist $ab\in\mathcal{L}^2(X_{\varphi})$ and $n\geq 1$ such that $w$ appears in $\varphi^n(ab)$. 
	Then $\hat{w}$ appears in $\hat{\varphi}_r^n(v)$ for any $v$ in $\mathcal{L}^r(X_{\varphi})=\widehat{\mathcal{A}_r}$ which starts with $ab$ and thus $\hat{w}$ lies in $\mathcal{L}(X_{\hat{\varphi}_r})$. 

Conversely, let $\hat{w}$ be a word in $\mathcal{L}(X_{\hat{\varphi}_r})$.  There exist $v\in \widehat{\mathcal{A}_r}$ and $n\geq 1$ such that $\hat{w}$ appears in $\hat{\varphi}_r^n(v)$. In particular, there exists a word $w\in \mathcal{A}^*$ of length $|w|=|\hat{w}|+r-1$ such that $\hat{w}$ is the ordered list of factors of $w$ of length $r$, and $w$ appears in $\varphi^n(v)$. Since $v\in \mathcal{L}^r(X_{\varphi})$,  there exist $a\in\mathcal{A}$ and $m\geq 1$ such that $v$ appears in $\varphi^m(a)$. Hence $w$ appears in $\varphi^{n+m}(a)$ and $w$ lies in  $\mathcal{L}(X_{\varphi})$. Thus  $\hat{w}$ lies in  $\mathcal{L}(\iota(X_{\varphi}))$.
\end{proof}

The following well-known result gives a description of factor maps between subshifts; it is sometimes referred to as the Curtis--Hedlund--Lyndon Theorem as it is a special case of a theorem in cellular automata known by this name. 

\begin{theorem}\label{thm:CurtisHedlundLyndon}\cite[Thm.\ 6.2.9]{bookMarcusLind} Let $\pi\colon X\to Y$ be a  map between  subshifts $X$ and $Y$  over alphabets $\mathcal{A}$ and $\mathcal{B}$, respectively. 
The map $\pi$ is a factor map if and only if  there exist an odd integer $r=2n+1$, $n\geq 0$ and a coding 
\[\pi_r\colon \widehat{\mathcal{A}_{r}}\to\mathcal{B}\] such that the following diagram
\begin{equation}\label{eq:diagramCurtis}
  \begin{tikzcd}
    X \arrow{r}{\iota_r} \arrow{d}{\pi} & \iota_r(X) \arrow{d}{\pi_r}   \\
   Y\arrow{r}{T^n} &  Y
  \end{tikzcd}
\end{equation}
commutes, where $\widehat{\mathcal{A}_{r}}=\mathcal{L}_r(X)$ and $\iota_r$ is the $r$th higher block presentation map. 
\end{theorem}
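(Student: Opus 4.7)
My plan is to dispatch the ``if'' direction as a routine composition of continuous shift-commuting maps, and focus the main argument on the forward direction via compactness of $X$.

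For the ``if'' direction, suppose the coding $\pi_r$ and integer $r = 2n + 1$ exist. The higher block presentation map $\iota_r$ is a conjugacy between $X$ and $\iota_r(X)$ by \cite[Sec.\ 1.4]{bookMarcusLind}; the coding $\pi_r$ extends pointwise to a continuous shift-commuting map $\iota_r(X) \to \B^{\Z}$; and $T^n$ is a homeomorphism commuting with $T$. Hence $T^{-n} \circ \pi_r \circ \iota_r$ is continuous and shift-commuting, its image lies in $Y$, and it agrees with $\pi$ by the commuting diagram; combined with the standing assumption that $\pi$ is onto $Y$, this shows $\pi$ is a factor map.

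For the forward direction, I would equip $X$ with the standard metric $d(x, x') = 2^{-k}$, where $k$ is the largest integer with $x_{[-k, k+1)} = x'_{[-k, k+1)}$. Continuity of $\pi$ combined with compactness of $X$ gives uniform continuity, which, applied to the finite clopen decomposition of $Y$ by the cylinders $\{y \in Y : y_0 = b\}$, $b \in \B$, furnishes an $n \geq 0$ such that $x_{[-n, n+1)} = x'_{[-n, n+1)}$ implies $\pi(x)_0 = \pi(x')_0$. Setting $r = 2n + 1$, I would then define a coding $\pi_r \colon \widehat{\A_{r}} \to \B$ by the rule $\pi_r(x_{[-n, n+1)}) = \pi(x)_0$; the uniform continuity statement is precisely what makes $\pi_r$ well-defined.

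Shift-commutativity of $\pi$ then promotes this local description to the global identity
\[
\pi(x)_j = \pi(T^j x)_0 = \pi_r\bigl((T^j x)_{[-n, n+1)}\bigr) = \pi_r(x_{[j-n, j+n+1)}) \quad \text{for all } j \in \Z.
\]
Substituting $i = j - n$, this becomes $\pi(x)_{i+n} = \pi_r(x_{[i, i+r)}) = (\pi_r \circ \iota_r)(x)_i$, i.e., $T^n \circ \pi = \pi_r \circ \iota_r$, which is the commuting diagram \eqref{eq:diagramCurtis}. The main obstacle is the passage from pointwise continuity of $\pi$ to a uniform window size; this is precisely where compactness of $X$ is essential, via a finite subcover argument for the clopen cylinder decomposition of $Y$.
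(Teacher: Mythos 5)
Your proposal is correct and is essentially the standard sliding-block-code argument; the paper gives no proof of its own here but cites \cite[Thm.\ 6.2.9]{bookMarcusLind}, whose proof proceeds exactly as you do (uniform continuity on the compact subshift yields a finite window determining $\pi(x)_0$, and shift-equivariance globalises this to the coding $\pi_r$ with the shift correction $T^n$). Your remark that surjectivity must be carried as a standing hypothesis in the ``if'' direction is an accurate reading of the statement.
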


\begin{remark}
	By Proposition \ref{prop:higherblocksubstitutionproperties}\eqref{prop:higherblocksubstitutionproperties4}, one can put $X=X_{\varphi}$ and $\iota_r(X_{\varphi}) = X_{\hat{\varphi}_{r}}$ in diagram \eqref{eq:diagramCurtis}.
	This often allows to transform questions about factor maps from the substitutive system $X_{\varphi}$ to questions about codings of substitutive systems $X_{\hat{\varphi}_{r}}$ defined over larger alphabets $\widehat{\mathcal{A}_{r}}$.
\end{remark}

\begin{remark}
	In Theorem \ref{thm:CurtisHedlundLyndon} for a given factor map $\pi\colon X\to Y$ one can instead work with the shifted higher block presentation map $i'_r =i_r\circ T^n$ so that $\pi = \pi_r\circ \iota'_r$.
	 Note however that with this definition  the equation \eqref{eq:iotacommutes} does not hold as long as we keep the usual definition of $\hat{\varphi_r}$.
	 One can---with a somewhat more complicated construction--- define a substitution $\hat{\varphi'_r}$ on $\widehat{\A_r}$ so that $\iota'_r\circ \varphi = \hat{\varphi'_r}\circ \iota'_r$, see
 \cite[Sect.\ 4.2]{DL-2018} for details; however we will not need it.
\end{remark}

\subsection{Recognisability of substitutive systems}\label{sec:recog} 

Let $X_{\varphi}$ be a purely substitutive system generated by a substitution $\varphi$. It is not difficult to show that each point $x\in X_{\varphi}$ can be 'desubstituted' within $X_{\varphi}$, that is, there exist $x'\in X_{\varphi}$ and $c\in\N$ such that
\begin{equation}\label{eq:desubstitution} 
 x=T^{c}\varphi(x')\quad \text{and}\quad 0\leq c<\abs{\varphi(x'_0)}.
\end{equation}
Note that for a nonperiodic $x\in X_{\varphi}$, any point $x'$  satisfying \eqref{eq:desubstitution} is also nonperiodic.

	One of the most fundamental properties of purely substitutive systems is the so-called \emph{recognizability} which  deals with situations when the desubstitution representation \eqref{eq:desubstitution} of all nonperiodic points $x\in X_{\varphi}$ is unique; the assumption of nonperiodicity is easily seen to be necessary \cite[Rem.\ 2.2(4)]{BSTY}.
	 The notion of recognizability has a long history;  we refer to \cite{BSTY} for a thorough introduction.

	The problem of recognisability of purely substitutive systems  has been  settled in  full generality by Berth\'e et al.\ in \cite[Thm.\ 5.3]{BSTY}: For any substitution $\varphi$  all nonperiodic points $x\in X _{\varphi}$ are \textit{recognisable}, that is, for any nonperiodic $x\in X _{\varphi}$ the representation \eqref{eq:desubstitution} exists and is unique.  This result builds on earlier works \cite{BKM,DM, Mosse-96}, see also \cite{BPR23, BPRS2023}  for a new simplified proof.
	This allows us to make the following definitions.

\begin{definition}\label{desubstitution} Let $\varphi\colon\mathcal{A}\to \mathcal{A}^*$ be a substitution. Let $X'_{\varphi}\subset X_{\varphi}$ denote the set of nonperiodic points in $X_{\varphi}$.
	 Define the `desubstitution' maps
\[ R_{\varphi}\colon X'_{\varphi}\to X'_{\varphi}, \  R_{\varphi}(x)=x' \quad  \text{and} \quad c_{\varphi}\colon X'_{\varphi}\to \N, \ c_{\varphi}(x)= c,
\] to be  unique maps such that $x=T^c(\varphi(x'))$ and $0\leq c< |\varphi(x'_0)|$. For a nonperiodic $x\in X_{\varphi}$, let $\bm{R_{\varphi}}(x)=(R^i_{\varphi}(x))_{i\geq 0}$\footnote{By convention, $R^0_{\varphi}(x)=x$.} and $\bm{c_{\varphi}}(x)=(c_{\varphi}(R^i_{\varphi}(x)))_{i\geq 0}$ denote the  sequences over $X'_{\varphi}$ and $\N$, respectively, obtained by iterating the map $R_{\varphi}$ and then applying $c_{\varphi}$ in the case of the sequence $\bm{c_{\varphi}}(x)$.
\end{definition}

The map $R_{\varphi}$ can be thought of as the 'inverse' of $\varphi$ on $X'_{\varphi}$:  for all $x\in X'_{\varphi}$ we have \[R_{\varphi}(\varphi(x))=x\quad \text{and}\quad \varphi(R_{\varphi}(x))=T^{-c_{\varphi}(x)}(x).\] Note also that $R^m_{\varphi}(x)=R_{\varphi^{m}}(x)$ for all $x\in X'_{\varphi^{m}}\subset X'_{\varphi}$ and all $m\geq 1$.

\begin{remark}\label{rem:desubstitution}
If $\varphi$ is of constant length $k\geq 2$, then the sequence $\bm{c_{\varphi}}(x)=(c_i)_i$ lies in $[0,k-1]^{\N}$. Since $\varphi(T^cx)=T^{ck}\varphi(x)$ for any $c\in\N$, we have
\begin{equation}\label{eq:desubstitutionautomatic}
x=T^{s_{n}}\varphi^n(x^n),\quad \text{where}\quad s_n=\sum_{i=0}^{n-1} c_ik^i.
\end{equation} 
\end{remark}

In general, recognizability fails for one-sided substitutive systems, even the primitive ones (see \cite[Rem.\ 2.2(5)]{BSTY} for an example). 

\begin{remark}\label{rem:finitecphi}
	It is not difficult to see that for any substitution $\varphi\colon\A\to\A^*$ and  sequence $\bm{c}$, there are at most $\abs{\A}^3$ different nonperiodic sequences $x\in X_{\varphi}$ such that $\bm{c_{\varphi}}(x)=\bm{c}$.
	 Indeed, suppose that there are more than $\abs{\A}^3$ nonperiodic points in $X_{\varphi}$ whose $\bm{c_{\varphi}}$ desubstitution sequence is equal to $\bm{c}$.
	  By pigeonhole principle, we can  find two different nonperiodic sequences $x$ and $y$ such that for infinitely many $i\in\N$, 
\[ x^i_{-1}x^i_{0}x^i_1 = y^i_{-1}y^i_{0}y^i_1,
\]
where $x^i=\bm{R_{\varphi}}(x)$ and $y^i=\bm{R_{\varphi}}(y)$. 	Since $\bm{c_{\varphi}}(x)=\bm{c_{\varphi}}(y)$ and $\varphi$ is growing, we have $x=y$, which is a contradiction.
\end{remark}

Below we show that for any factor map from a $k$-automatic system onto some subshift, its fibres on nonperiodic points are uniformly bounded in size. This is well-known in the case of minimal substitutive systems \cite[Thm.\ 20]{D-2000}; we are not aware of a reference in the general case. We will deduce it from the so-called Critical Factorisation Lemma, a classical result in combinatorics of words \cite[Chapter 8]{Lothaire-book1}. 

Let $\mathcal{A}$ be an alphabet and let $W\subset \mathcal{A}^+$ be a finite set. For a two-sided sequence $x$, a $W$-\emph{factorisation} of $x$ is a map $F\colon \Z\to W\times\Z$ such that for all $k\in \Z$, if $F(k)=(w,i)$ and $F(k+1)=(v,j)$, then $x_{[i,j)}=w$; in particular, $x$ can be written as a concatenation of words in $W$. The set of cuts of a $W$-factorisation $F$ of $x$ is the set of all starting positions of factors $w\in W$ in $x$, i.e.\ it is the set $\pi_2(F(\Z))\subset \Z$, where $\pi_2$ denotes the projection on the second coordinate.  Two $W$-factorisations of a  sequence   are \emph{disjoint} if their sets of cuts are disjoint.

Critical Factorisation Theorem (or one of its corollaries) states that a nonperiodic sequence $x\in\mathcal{A}^{\Z}$ has at most $|W|$ pairwise disjoint $W$-factorisations   \cite[Cor.\ 1]{KM-2002}.

\begin{lemma}\label{lem:finitely_many_representations} 
Let $\varphi$ be a substitution of constant length and let  $\pi\colon X_{\varphi}\to Y$ be a factor map onto some subshift $Y$. Then, $|\pi^{-1}(y)|\leq K$ for all nonperiodic $y\in Y$ with some constant $K$ independent of $y$.
\end{lemma}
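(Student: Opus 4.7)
The plan is to reduce the problem to a coding via the higher block presentation, and then to combine recognisability with the Critical Factorisation Theorem to control fibres. First, by the Curtis--Hedlund--Lyndon theorem (Theorem \ref{thm:CurtisHedlundLyndon}) together with Proposition \ref{prop:higherblocksubstitutionproperties}, I would replace $\varphi$ by its $r$-block substitution $\hat{\varphi}_{r}$ (still of constant length $k$) and thereby assume that $\pi$ is induced by a coding $\tau\colon \mathcal{A}\to\mathcal{B}$; this only inflates the alphabet from $\mathcal{A}$ to $\widehat{\mathcal{A}_{r}}$, which is irrelevant since we only need a constant independent of $y$.

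Next, I would fix a nonperiodic $y\in Y$ and note that every $x\in \pi^{-1}(y)$ is nonperiodic. Recognisability and equation \eqref{eq:desubstitutionautomatic} furnish, for each $n\geq 1$, a unique $s_{n}(x)\in [0,k^{n})$ with
\[x = T^{s_{n}(x)} \varphi^{n}(R_{\varphi}^{n}(x)).\]
Applying $\tau$ produces a factorisation of $y$ over the word set $W_{n} = \tau(\varphi^{n}(\mathcal{A}))$ whose set of cuts is the arithmetic progression $\{mk^{n}-s_{n}(x): m\in \Z\}$. Two preimages with different residues $s_{n}(x)\bmod k^{n}$ thus yield \emph{disjoint} $W_{n}$-factorisations of $y$, so the Critical Factorisation Theorem (\cite[Cor.\ 1]{KM-2002}) bounds the number of such residues by $|W_{n}|\leq |\mathcal{A}|$; since $s_{n}(x)\in[0,k^{n})$, this is also a bound on the number of distinct values of $s_{n}$ arising among preimages of $y$, for every $n$.

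To promote this level-by-level bound to a bound on $|\pi^{-1}(y)|$, I would observe that if $\bm{c_{\varphi}}(x)\neq \bm{c_{\varphi}}(x')$ and the two sequences first disagree at index $i$, then $s_{n}(x)\neq s_{n}(x')$ for every $n>i$. Hence any finite collection of preimages with pairwise distinct $\bm{c_{\varphi}}$ sequences can be separated at a sufficiently large single level $n$, and the previous paragraph bounds by $|\mathcal{A}|$ the number of distinct $\bm{c_{\varphi}}$ sequences realised among preimages of $y$. Combined with Remark \ref{rem:finitecphi}, which bounds by $|\mathcal{A}|^{3}$ the number of nonperiodic points sharing a given $\bm{c_{\varphi}}$, this yields the uniform bound $|\pi^{-1}(y)|\leq |\mathcal{A}|^{4}$ (with $|\mathcal{A}|$ replaced by $|\widehat{\mathcal{A}_{r}}|$ in the original problem).

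The step I expect to require the most care is setting up the factorisation framework so that the Critical Factorisation Theorem applies cleanly: I need to verify that the normalisation $s_{n}(x)\in[0,k^{n})$ really makes distinct $s_{n}$-values correspond to disjoint cut sets, and that the single-level bound $|\mathcal{A}|$ propagates to the whole family of desubstitution sequences through the separation-at-large-$n$ argument (rather than only to their projections at each fixed $n$). The remaining pieces---recognisability, Remark \ref{rem:finitecphi}, and the higher block reduction---are essentially bookkeeping.
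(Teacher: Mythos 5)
Your proposal is correct and follows essentially the same route as the paper: reduce to a coding via Curtis--Hedlund--Lyndon and the higher block presentation, bound the number of distinct desubstitution sequences $\bm{c_{\varphi}}(x)$ among preimages of $y$ by $|\mathcal{A}|$ using disjoint $W$-factorisations and the Critical Factorisation Theorem, and finish with Remark \ref{rem:finitecphi}. The only cosmetic difference is that the paper separates a hypothetical set of $|\mathcal{A}|+1$ distinct sequences at one sufficiently large level $m$ directly, whereas you phrase the same separation level-by-level before promoting it; the arguments are logically identical.
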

\begin{proof}
	Using Curtis--Hedlund--Lyndon Theorem (Theorem \ref{thm:CurtisHedlundLyndon}) and Proposition \ref{prop:higherblocksubstitutionproperties}\eqref{prop:higherblocksubstitutionproperties4}, we may assume without loss of generality that the factor map $\pi\colon X_{\varphi}\to Y$ is a coding by passing to some higher block presentation of $X_{\varphi}$.
	We write $\pi=\tau$ in this case.
	
	Let $y\in Y=\tau(X_{\varphi})$ be nonperiodic.
	By Remark \ref{rem:finitecphi}, it is enough to show that the set 	
	 \[\{\bm{c_{\varphi}}(x)\mid x\in X_{\varphi} \text{ and } \tau(x)=y\}\] is finite; we claim that its size is at most $|\mathcal{A}|$. 
 
	Suppose that the claim does not hold and let $F\subset X_{\varphi}$ be the set of size $\abs{A}+1$ such that $\bm{c_{\varphi}}(x)$ are pairwise distinct and $\tau(x)=y$ for all $x\in F$.
	Let $m\geq 1$ be such that the prefixes $c_0^x\dots c_{m-1}^x$ of length $m$ of the sequences $\bm{c_{\varphi}}(x)=(c^x_n)_n\in [0,k-1]^{\mathbf{N}}$ are pairwise different. 
	Note that this implies that all integers 
	\[ s_x=\sum_{n=0}^{m-1}c^x_n k^n,\quad x\in F\]
	 are pairwise different and, by Remark \ref{rem:finitecphi}, we have
	 \begin{equation}\label{eq:cuts}
y=T^{s_{x}}(\tau\varphi^m(x)),	\quad x\in F.		
	\end{equation} 
	 Put $W=\{\tau(\varphi^m(a))\mid a\in\mathcal{A}\}$, and note that $|W|=|\mathcal{A}|$. 
	 Since the morphism $\tau\circ\varphi^m$ is of constant length and $s_x$, $x\in F$, are pairwise different, equation \eqref{eq:cuts} implies that $y$ has $|W|+1$ pairwise disjoint $W$-factorisations. Since $y$ is nonperiodic, this contradicts the Critical Factorisation Theorem  \cite[Cor.\ 1]{KM-2002}.
\end{proof}

\subsection{k-adic integers and $k$-automatic systems}\label{sec:k-adic}

 Let $k\geq 2$ be an integer. 
 	Let  $\mathbf{Z}_k$ denote the ring of $k$-adic integers, that is, the inverse limit $\mathbf{Z}_k=\varprojlim\mathbf{Z}/k^n\mathbf{Z}$ of the inverse system of rings $(\Z/k^i\Z )$, $i\geq 1$ (where the morphisms  are given by the natural quotient maps, and we consider each ring $(\Z/k^i\Z )$ with the discrete topology). 
The ring $\Z_k$ is a compact topological ring; we may identify $\Z_k$ with the following closed subgroup of the direct product of $\Z/k^i\Z$:
\[\Z_k=\{(s_i)_i\in \prod \Z/k^i\Z\mid s_i\equiv s_{i+1} \bmod k^i\}.
\]
As a topological ring $\Z_k$ admits a natural isomorphism
 \begin{equation}\label{eq:isomorphismkadic}\Z_k=\prod \Z_p\quad (z_i)_i\to \prod (z_i \bmod p^i)_i
 \end{equation}
where $p$ runs over the set of primes dividing $k$. The map $f\colon\Z\to\Z_k$ that sends an integer $z$ to the sequence $(z\bmod k^i)_i$ in $\Z_k$ is an injective ring homomorphism, and we identify $\Z$ with the subring $f(\Z)$ of $\Z_k$.

	 Each $k$-adic integer $z$ has a unique $k$-adic expansion $z=.c_0c_1\ldots$ with $c_i\in [0,k-1]$, i.e.\ it can be written uniquely in the form
\begin{equation}\label{eq:kadicsexpan}
z=c_0+c_1k+ c_2k^2+\dots.
\end{equation}
	Furthermore, an integer $z$ is invertible in $\Z_k$ if and only if $\gcd(z,k)=1$, and the set of rational numbers $p/q$, $\gcd(q,k)=1$ in $\Z_k$ corresponds to the set of $k$-adic integers with ultimately periodic $k$-adic expansion.
(Both claims are  classical for the ring of $p$-adic integers $\Z_p$ with $p$ prime \cite[Prop.\ 4.2.2 and Cor.\ 4.3.3]{p-adicbook} and the claim for general $k$ follows from the isomorphism \eqref{eq:isomorphismkadic}.) 

\begin{remark}\label{rem:rationals}
	If $(q,k)=1$, then there exists some $c\geq 1$ (e.g.\ $c=\varphi(q)$ with $\varphi$ denoting here the Euler's totient function) such that $k^c\equiv 1 \bmod q$.
	Thus, any rational number $p/q$ in $\Z_k$ can be written in the form $m(1-k^c)^{-1}$ for some $m\in\Z$ and $c\geq 1$.
\end{remark}

On $\mathbf{Z}_k$ we consider the map $R\colon z\mapsto z+1$, where 1 denotes the identity element of the ring $\mathbf{Z}_k$. The map $R$ is an isometry, and  $\mathbf{Z}_k$ together with  $R$ forms a minimal equicontinuous system.

One of the consequences of recognisability for  aperiodic purely automatic systems  is that they admit the ring of $k$-adic integers as an equicontinuous topological factor.\footnote{This cannot hold for automatic systems with periodic points---once a dynamical system contains periodic points its maximal equicontinuous factor is always finite.}
 Indeed, let $\varphi$ be of length $k$ and assume  that  $X_{\varphi}$ is is aperiodic. By recognizability of all points in $X_{\varphi}$ and Remark \ref{rem:desubstitution}, for each $x\in X$ and each $n\geq 0$ there exists a unique integer $0\leq s_n<k^n$ such that $x=T^{s_{n}}\varphi^n(x^n)$ for some $x^n\in X_{\varphi}$; clearly, $s_{n+1}\equiv s_{n}\bmod k^n$   and  the map 
\begin{equation}\label{eq:factormaptokadics}
\kappa\colon X_{\varphi}\to \Z_k, \quad x\mapsto (s_n)_n
\end{equation} 
is well-defined.  It is straightforward to see that $\kappa$ is continuous and that
\[ \kappa(T^n(x)) = R^n(\kappa(x))=\kappa(x)+n \quad \text{and}\quad \kappa(\varphi^c(x)) = k^c\cdot\kappa(x) \quad \text{for } x\in X_{\varphi}.
\]
Since $(\Z_k,R)$ is minimal, $\kappa(X_{\varphi})=\Z_k$ and $\kappa$ is a factor map onto  $\Z_k$. Furthermore, substitution $\varphi$ corresponds to multiplication by $k$ in $\Z_k$ via $\kappa$. The sequence $\bm{c_{\vartheta}}(x)$ is the sequence of coefficients in the $k$-adic expansion \eqref{eq:kadicsexpan} of $\kappa(x)$.

\begin{remark}\label{rem:factormapuniqueness}
	The map $\kappa$ in \eqref{eq:factormaptokadics} is the unique factor map which sends $\varphi$-periodic points to	 $0\in\Z_k$. In general,  the set of all factors maps  $X_{\varphi}\to\Z_k$ is given by 
\[\{ R_{z}\circ \kappa\mid z\in \Z_k\},\] 
where $R_z(y)=y+z$ denotes the translation by $z$. In particular, for any factor map $\tilde{\kappa}\colon X_{\varphi}\to \Z_k$, $\tilde{\kappa}(x)=\tilde{\kappa}(y)$ if and only if $\bm{c_{\vartheta}}(x)=\bm{c_{\vartheta}}(y)$.
\end{remark}

	The famous Rudin--Shapiro sequence is a well-known example of a (minimal) automatic sequence which is not purely substitutive (in particular, not purely automatic) \cite[Ex.\ 26]{AlloucheTaxonomy}. It is natural to inquire what happens on the level of dynamical systems.
Indeed, in many cases purely substitutive systems are distinctly easier to study,  e.g.\ due to the recognizability properties they enjoy (see Section \ref{sec:recog}).  
	In the case of automatic systems, a recent result of M\"ullner and Yassawi shows that each  minimal $k$-automatic system is isomorphic with a purely $k$-automatic system, see Theorem \ref{thm:MY} below. Example \ref{ex:automaticsystemnotsubstitutive} below shows that this no longer holds for general automatic systems (even the aperiodic ones).

\begin{theorem}\cite[Thm.\ 5 and Thm.\ 22]{MY-21}\label{thm:MY} Let $\varphi$ be a primitive substitution of  length $k$, let $\tau$ be a coding, and assume that $Y=\tau(X_{\varphi})$ is aperiodic.
	 Then $Y$ is conjugate with some purely $k$-automatic system. In particular, $\Z_k$ is a factor of $Y$; furthermore $\kappa\colon Y\to\Z_k$ is a factor map if and only if there exists a factor map $\kappa'\colon X_{\varphi}\to\Z_k$ such that $\kappa'=\kappa\circ\tau$.
\end{theorem}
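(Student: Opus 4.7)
The substantive content is the first assertion---that $Y$ is conjugate with a purely $k$-automatic system. Once it is established, the claim that $\Z_k$ is a factor of $Y$ follows by applying the construction of \eqref{eq:factormaptokadics} to that purely $k$-automatic realisation, and the ``furthermore'' clause reduces, in view of Remark~\ref{rem:factormapuniqueness}, to the observation that any factor map $\kappa\colon Y\to\Z_k$ lifts along $\tau$ to a factor map $\kappa'=\kappa\circ\tau\colon X_\varphi\to\Z_k$, and conversely any factor map $\kappa'\colon X_\varphi\to\Z_k$ that factors through $\tau$ descends to a factor map $\kappa\colon Y\to\Z_k$ by the universal property of the quotient topology.

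Turning to the main assertion, I would first note that since $\varphi$ is primitive, $X_\varphi$ is minimal, and so is $Y$; moreover $X_\varphi$ is aperiodic, because a minimal system containing a periodic point is a single finite orbit, which would force $Y$ to be periodic too, contradicting the hypothesis. Hence recognisability from \cite{BSTY} applies throughout $X_\varphi$, and the factor map $\kappa'\colon X_\varphi\to\Z_k$ from \eqref{eq:factormaptokadics} is at our disposal, identifying the $k$-adic coordinate $\kappa'(x)$ with the desubstitution sequence $\bm{c_\varphi}(x)$.

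To produce the purely $k$-automatic representative of $Y$ I would combine Theorem~\ref{thm:CurtisHedlundLyndon} with Proposition~\ref{prop:higherblocksubstitutionproperties}. After replacing $\varphi$ by a sufficiently high ambi-idempotent power (Lemma~\ref{lem:idempotent}; allowed by Lemma~\ref{lem:closure}\eqref{lem:closure3}), I would pick $r\geq 1$ large and attempt to push the $r$-block substitution $\hat\varphi_r$ on $\widehat{\mathcal{A}_r}=\mathcal{L}^r(X_\varphi)$ forward along the letterwise coding $\widehat{\mathcal{A}_r}\to\mathcal{L}^r(Y)$ induced by $\tau$ to a substitution $\psi$ on $\mathcal{L}^r(Y)$ of length $k$. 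If this pushforward is well-defined, then $X_\psi=\iota_r(Y)$, which is conjugate to $Y$ via $\iota_r$, and we are done.

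The principal obstacle---and the technical heart of \cite[Thms.~5, 22]{MY-21}---is establishing well-definedness of $\psi$: whenever two blocks $w,w'\in\widehat{\mathcal{A}_r}$ satisfy $\tau(w)=\tau(w')$ letter-by-letter, one needs $\hat\varphi_r(w)$ and $\hat\varphi_r(w')$ to agree under $\tau$ as well. The tools I would deploy are the uniform fibre bound $|\tau^{-1}(y)|\leq K$ from Lemma~\ref{lem:finitely_many_representations}, together with the fact that $\varphi$ acts on $\kappa'$-coordinates by multiplication by $k$ in $\Z_k$; this should force the $\tau$-fibre ambiguities at $r$-blocks to propagate coherently under $\hat\varphi_r$ once $r$ is chosen sufficiently large relative to $K$. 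The delicate point is the correct choice of $r$ and the combinatorial book-keeping needed to show that the coding $\widehat{\mathcal{A}_r}\to\mathcal{L}^r(Y)$ becomes a conjugacy (possibly after passing to a subalphabet reachable from $\iota_r(Y)$), which I expect to require a careful analysis of how $\tau$-fibres of long blocks interact with the desubstitution operator $R_\varphi$ from Definition~\ref{desubstitution}.
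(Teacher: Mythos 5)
The paper does not prove this statement: it is imported verbatim from \cite[Thm.\ 5 and Thm.\ 22]{MY-21}, so there is no internal proof to match your argument against. Your reductions of the secondary claims to the main one are fine: once $Y$ is conjugate to an aperiodic purely $k$-automatic system, the factor map \eqref{eq:factormaptokadics} gives $\Z_k$ as a factor of $Y$, and the ``furthermore'' clause is the routine observation that composing with the surjection $\tau$ preserves factor maps in one direction and that a map descending along a quotient of compact Hausdorff spaces is automatically continuous in the other. Likewise the preliminary observations (minimality and aperiodicity of $X_{\varphi}$, availability of recognisability) are correct.

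The gap is in the main assertion, which is precisely the content of \cite{MY-21} and which your sketch does not establish. You reduce everything to the claim that, for $r$ large, the letterwise coding $\widehat{\mathcal{A}_r}\to\mathcal{L}^r(Y)$ induced by $\tau$ pushes $\hat{\varphi}_r$ forward to a well-defined substitution $\psi$ on $\mathcal{L}^r(Y)$, and then you explicitly defer this (``should force'', ``I expect to require a careful analysis''), which is exactly the step that constitutes the theorem. Moreover, the mechanism as stated is doubtful: well-definedness requires that $\tau(w)=\tau(w')$ imply that $\hat{\varphi}_r(w)$ and $\hat{\varphi}_r(w')$ have the same letterwise $\tau$-image, and this can fail no matter how the blocks are extended. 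For the Rudin--Shapiro substitution $\varphi(a)=ab$, $\varphi(b)=ac$, $\varphi(c)=db$, $\varphi(d)=dc$ with $\tau(a)=\tau(b)=0$, $\tau(c)=\tau(d)=1$, the blocks $ab$ and $ba$ both lie in $\mathcal{L}^2(X_{\varphi})$ and have $\tau$-image $00$, yet $\tau(\hat{\varphi}_2(ab))=(00)(00)$ while $\tau(\hat{\varphi}_2(ba))=(01)(10)$; nothing in your sketch shows that such collisions disappear for large $r$, and the uniform fibre bound of Lemma \ref{lem:finitely_many_representations} controls preimages of \emph{points} of $Y$, not of finite blocks, so it does not by itself deliver the required coherence. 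The conjugacy constructed in \cite{MY-21} is not of this naive pushforward form, and a correct proof must either follow their construction or supply a genuinely different argument; as written, the proposal records the statement of the problem rather than a solution.
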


\begin{example}\label{ex:automaticsystemnotsubstitutive}
Let $\A$ be an alphabet and let $\varphi\colon\A\to\A^*$ be any primitive substitution of length 5 with a nonperiodic fixed point $x$.
Let $x'=T(x)$ be the shift of $x$; the sequence $x'$ is also $5$-automatic, minimal, and nonperiodic. Hence, there exists an alphabet $\A'$, a primitive substitution $\varphi'\colon\A'\to(\A')^*$ of length 5, a fixed point $x''$ of $\varphi'$ , and a coding $\tau'\colon \A'\to\A$ such that $x'=\tau'(x'')$. Let $\mathcal{B}=\{\clubsuit\}\cup\A\cup\A'$ be a new alphabet, choose any letters $a\in\A$, $a'\in\A'$, and define a new substitution  $\vartheta\colon\mathcal{B}\to\mathcal{B}^*$ by
\[\vartheta(\clubsuit)=\clubsuit\clubsuit a a'\clubsuit,\quad \vartheta|_{\A}=\varphi, \quad \vartheta|_{\A'}=\varphi'\]
and a coding $\tau\colon\mathcal{B}\to\A$ by
\[\tau(\clubsuit)=\clubsuit,\quad \tau|_{\A}=\mathrm{id},\quad \tau_{\A'}=\tau'.\]
Let $X=\tau(X_{\vartheta})$ be the $5$-automatic system generated by $(\vartheta,\tau)$. 
	Note that $X$ is transitive (we have $X= \overline{\mathcal{O}({}^{\omega}\!(\vartheta)(\clubsuit).(\vartheta)^{\omega}(\clubsuit))}$) and aperiodic, both $x'$ and $x''$ lie in $ X_{\vartheta}$, $\tau(x')=\tau(x'')=x'$ and
\[\bm{c_{\vartheta}}(x') = (1,0,0,\dots)\neq (0,0,0,\dots) = \bm{c_{\vartheta}}(x'').\]

We will now show that $X$ is not conjugate to any purely automatic system. Suppose otherwise. Then, since $X$ is aperiodic,  there exists a factor map $\pi\colon X\to\Z_2$. It follows that the map $\tilde{\pi}=\pi\circ\tau$ is a factor map from $X_{\vartheta}$ to $\Z_2$. Since $\tau(x')=\tau(x'')$, we have 
$\tilde{\pi}(x')=\tilde{\pi}(x'')$. However, $\bm{c_{\vartheta}}(x')\neq \bm{c_{\vartheta}}(x'')$, which contradicts Remark \ref{rem:factormapuniqueness}.
\end{example} 	
 	
\section{Quasi-fixed points of  substitutions}\label{sec:quasi-fixed}

\begin{definition}\label{def:generalised_fixed_point} Let $\varphi\colon \mathcal{A}\rightarrow \mathcal{A}^*$ be a substitution. A sequence $z\in \mathcal{A}^{\mathbf{Z}}$ is called a \emph{quasi-fixed point} of $\varphi$ if there exist $m\geq 1$ and $c\in \mathbf{Z}$ such that
\[T^c(\varphi^m(z))=z.\] We say that a quasi-fixed point $z$ has  \emph{period} $m$ (w.r.t.\ $\varphi$) if $T^c(\varphi^m(z))=z$ for some $c\in \Z$. A quasi-fixed point $z$ has  \emph{minimal period} $m$ (w.r.t.\ $\varphi$) if $m$ is a period of $z$ and no $n<m$ is a period of $z$.
\end{definition}

Note that $\varphi$-periodic points (i.e.\ points $z\in\A^{\Z}$ such that $\varphi^m(z)=z$ for some $m\geq 1$) are quasi-fixed points of $\varphi$; furthermore all shifts of quasi-fixed points of $\varphi$ are quasi-fixed points of $\varphi$ (see Proposition \ref{prop:gen_points_are_automatic}\eqref{prop:gen_points_are_automatic1}). It is not difficult to find quasi-fixed points of $\varphi$ which are not shifts of $\varphi$-periodic points, consider e.g.\ Example \ref{ex:generalised_fixed_point} below.

\begin{example}\label{ex:generalised_fixed_point} Let   $\varphi\colon \mathcal{A} \rightarrow \mathcal{A}^*$ be the Thue-Morse substitution $\varphi(0)=01, \varphi(1)=10$ and consider its fourth iterate given by
\begin{equation}\label{eq:example_qp}
\varphi^4(0)=0110100110010110, \quad
\varphi^4(1)=1001011001101001.
\end{equation} Write $\varphi^4(0)=v0w$, where $v=01101$, $w=0110010110$. After iterating \eqref{eq:example_qp} we get:
\[\varphi^8(0)=\varphi^4(v)v0w\varphi^4(w),\quad
\varphi^{12}(0)=\varphi^8(v)\varphi^4(v)v0w\varphi^4(w)\varphi^8(v),\quad \ldots\] The sequence $z=\ldots \varphi^8(v)\varphi^4(v)v.0w\varphi^4(w)\varphi^8(v) \ldots$ lies in $X_{\varphi}$  and $T^5(\varphi^4(z))=z$. The sequence $z$ is not a $\varphi$-periodic point nor its shift.
\end{example}

Shallit and Wang \cite{twosidedfixedpointsSW} (see also \cite[Sec.\ 7.4]{AlloucheShallit-book}) and more recently B\'eal, Perrin, and Restivo \cite{BPR24} studied for a given substitution $\varphi\colon\mathcal{A}\rightarrow\mathcal{A}^*$ the set of points \[\{x\in\mathcal{A}^{\Z}  \text{ such that } x=T^c(\varphi(x)) \text{ for some } c\in \Z\},\] and obtained a complete characterisation of this set. 
	This characterisation  implies that  all  quasi-fixed points of $\varphi$, modulo the shift operation, are either $\varphi$-periodic points, periodic points (w.r.t\ the shift) or are of the special form suggested by Example \ref{ex:generalised_fixed_point} above. 			
	The characterisation in \cite{twosidedfixedpointsSW, BPR24}  does not require the assumption that $\varphi$ is growing. When $\varphi$ is growing, the characterisation simplifies and readily gives rise to the following description of the set of quasi-fixed points of $\varphi$. It will be convenient to adopt the following notation from  \cite[Sec.\ 7.4]{AlloucheShallit-book}. 

\begin{definition}\label{def:quasifixedpointexplicitformula} Let $\varphi\colon\mathcal{A}\to\mathcal{A}^*$ be a substitution. Let $a\in \A$ and assume that 
\begin{equation}\label{eq:defquasifixedexplicit}
\varphi(a)=vav' \quad \text{for some}\quad v,v'\in\A^*.
\end{equation}
Recall that if we can write $\varphi(a)$ in the form \eqref{eq:defquasifixedexplicit} with $v$  empty, then $a$ is called  \emph{right-prolongable} and we use the notation
\[\varphi^{\omega}(a)=av'\varphi(v')\varphi^2(v')\dots\]
If  we can write $\varphi(a)$ in the form \eqref{eq:defquasifixedexplicit} with $v'$ empty, we say that $a$ is \emph{left-prolongable} and define a left-infinite word
\[{}^{\omega}\!\varphi(a) = \dots \varphi^2(v)\varphi(v)a.\] If both $v$ and $v'$ are nonempty,  we  define  the biinfinite sequence
\[\varphi^{\omega,i}(a)=\ldots\varphi^2(v)\varphi(v)v.av'\varphi(v')\varphi^{2}(v')\ldots,\] where $i=|v|$.
\end{definition}

\begin{proposition}\cite[Sec.\ 7.4]{AlloucheShallit-book}\label{prop:char_quasi_periodic_one}  Let $\varphi\colon \mathcal{A}\rightarrow \mathcal{A}^*$ be a substitution and let $x\in\mathcal{A}^{\Z}$. Then $x$  is a quasi-fixed point of $\varphi$ of period $m$ if and only if $x$ is a shift of $y$ that is of one of the following forms:\begin{enumerate}
\item \label{prop:char_quasi_periodic_one1}  $y= {}^{\omega}\!(\varphi^m)(b).(\varphi^m)^{\omega}(a)$ for some  $a,b\in\mathcal{A}$ which are, respectively, right and left-prolongable with respect to $\varphi^m$.
\item \label{prop:char_quasi_periodic_one2}  $y=(\varphi^m)^{\omega,i}(a)$ for some  $a\in\mathcal{A}$.
\end{enumerate} 
For a point $x$ to lie in $X_{\varphi}$, we need to further assume in \eqref{prop:char_quasi_periodic_one1} that  $ba\in \mathcal{L}(X_{\varphi})$. 
\end{proposition}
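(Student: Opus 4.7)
The plan is to split the argument into the easy direction---shifts of the listed forms are quasi-fixed points---and the harder converse. The easy direction is a direct verification: form (i) is a fixed point of $\varphi^m$ by the definitions of left- and right-prolongation, while form (ii), with $\varphi^m(a) = vav'$ and $|v| = i$, satisfies $T^i\varphi^m(y) = y$ by matching positions. Shift-invariance will follow from the identity $\varphi^m(T^s y) = T^d \varphi^m(y)$ for an explicit $d$ depending on $s$: if $y = T^c\varphi^m(y)$, then $T^s y = T^{s+c-d}\varphi^m(T^s y)$.

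For the converse, given $T^c\varphi^m(x) = x$, I would first replace $\varphi$ by $\psi := \varphi^m$ to reduce to the case $m = 1$. If $c = 0$ then $x = \psi(x)$, and the block decomposition $\psi(x) = \ldots\psi(x_{-1}).\psi(x_0)\ldots$ combined with $\psi$ being growing forces $\psi(x_0)$ to start with $x_0$ and $\psi(x_{-1})$ to end with $x_{-1}$; iterating $\psi$ then yields $x = {}^\omega\psi(x_{-1}).\psi^\omega(x_0)$, which is form (i). If $c \neq 0$, by symmetry I take $c > 0$. The identity $T^c\psi(x) = x$ places position $0$ of $x$ inside the block $\psi(x_j)$ for some unique $j \geq 0$, and I would then apply a shift-and-reduce argument: passing to $z = T^j(x)$ produces a new quasi-fixed point with shift constant $c' = c + j - |\psi(x_0\cdots x_{j-1})| \leq c$, and iteration terminates in a configuration $\psi(z_0) = v z_0 v'$ with $|v|$ equal to the final shift constant. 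If both $v$ and $v'$ are nonempty, then $z = \psi^{\omega,|v|}(z_0)$, which is form (ii); if $v'$ is empty, then $z_0$ is left-prolongable, which forces the neighbouring letter $z_1$ to be right-prolongable, and a standard induction using $\psi^n(v)\cdots\psi(v)v z_0 = \psi^{n+1}(z_0)$ identifies $z$ with a shift of ${}^\omega\psi(z_0).\psi^\omega(z_1)$, i.e.\ form (i).

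The main obstacle I anticipate is termination of the shift-and-reduce step for non-constant length $\psi$. In the constant length $k$ case, $c' = c - j(k-1) < c$ strictly whenever $j \geq 1$, so termination is immediate. In general, a single step can preserve $c$ when $|\psi(x_i)| = 1$ for all $i \in [0,j)$; however, because $\psi$ is growing, this degeneracy cannot persist across iterations---eventually some $|\psi(x_i)| \geq 2$ appears in the prefix---and careful bookkeeping recovers strict termination. The final clause, that $ba \in \mathcal{L}(X_\varphi)$ is necessary for form (i) points to lie in $X_\varphi$, is immediate from the definition of $X_\varphi$.
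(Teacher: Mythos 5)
Your argument is correct in outline but takes a genuinely different route from the paper. The paper does not reprove the classification at all: after the same reduction to period $m=1$ via $\psi=\varphi^m$, it simply invokes the Shallit--Wang classification of two-sided fixed points of $T^c\circ\varphi$ from \cite[Thm.\ 7.4.3]{AlloucheShallit-book} and checks that the two degenerate cases of that theorem (the set $F_{\varphi}$, and the case where $\varphi(uv)=vu$ for nonempty $u,v$) cannot occur for a growing substitution. You instead reconstruct the classification from scratch. Your easy direction, your $c=0$ case, and your analysis of the terminal configuration $\psi(z_0)=vz_0v'$ --- including the observation that $v'=\epsilon$ forces $z_1$ to be right-prolongable and yields a shift of the first listed form --- are all correct; a self-contained proof along these lines is a reasonable alternative to the citation, at the cost of redoing work that is already in the literature.

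The one step where your sketch is too optimistic is the termination of the shift-and-reduce, which you rightly single out as the main obstacle but then dismiss too quickly. Growth of $\psi$ alone does not imply that ``eventually some $|\psi(x_i)|\geq 2$ appears in the prefix'': a growing substitution may satisfy $|\psi(a)|=1$ for some letters (e.g.\ $\psi(a)=b$, $\psi(b)=ba$), and nothing a priori prevents an entire forward tail of the current point from consisting of such letters, in which case the shift constant never decreases and the process runs forever. To close this you must use the quasi-fixed relation itself. If the constant stabilises at some $C\geq 1$ from a point $w$ onward and the process never terminates, the successive windows $[J_{t-1},J_t)$ of shifted-past positions partition $[0,\infty)$ and each consists of positions $i$ with $|\psi(w_i)|=1$; hence $|\psi(w_i)|=1$ for \emph{all} $i\geq 0$, so $\psi(w)_{n+C}=\psi(w_{n+C})$ and the relation $w=T^C\psi(w)$ gives $w_n=\psi(w_{n+C})$ for all $n\geq 0$. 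Iterating, $w_0=\psi^k(w_{kC})$ and $|\psi^k(w_{kC})|=1$ for every $k$; by pigeonhole some letter $a$ has $|\psi^k(a)|=1$ for infinitely many $k$, and since $k\mapsto|\psi^k(a)|$ is nondecreasing this contradicts the growth of $\psi$. With this supplement (and the routine symmetric treatment of $c<0$) your proof is complete.
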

\begin{proof} Since all quasi-fixed points of $\varphi$ of period $m$ are quasi-fixed points of $\varphi^m$ of period $1$, we may assume without loss of generality that $m=1$. The first claim now follows from \cite[Thm.\ 7.4.3]{AlloucheShallit-book}. To see it, note that that the cases $(a)-(d)$ of  \cite[Thm.\ 7.4.3]{AlloucheShallit-book} all describe shifts of fixed points of $\varphi$, cf.\ Proposition  \cite[Prop.\ 7.4.1]{AlloucheShallit-book}, and that the set $F_{\varphi}$  is empty when $\varphi$ is growing. Since $\varphi$ is growing, the case $(f)$ cannot occur. (Indeed, suppose that there exist nonempty finite words $u,v$ such that $\varphi(uv)=vu$. Then an easy induction shows that $|\varphi^n(uv)|=|v|+|u|$ for all $n\geq 1$, which contradicts the fact that $\varphi$ is growing.) The characterisation of quasi-fixed points which lie in $X_{\varphi}$ follows easily from the definition of $X_{\varphi}$.
\end{proof}

Let $\varphi\colon\A\to\A^*$ be a substitution of constant length $k\geq 2$. The following definition introduces certain families $\mathrm{F}_{\varphi,m}$ of maps $\Psi\colon \A^{\Z}\to\A^{\Z}$, which allow us to express the $k$-kernels of points in $X_{\varphi}$ in a particularly convenient way (see Lemma \ref{lem:closure}\eqref{lem:closure5} for the definition of the $k$-kernel of a sequence).

\begin{definition}\label{def:kernelmaps}
	For an alphabet $\A$, let  $\F_{\mathcal{A}}$ denote the set of functions $f\colon \mathcal{A}^{\Z}\to\mathcal{A}^{\Z}$ that are induced by some function $f'\colon\mathcal{A}\to\mathcal{A}$, i.e.\ such that $f((x_n)_{n})=(f'(x_n))_{n}$.
	
	For a substitution $\varphi \colon \mathcal{A}\rightarrow \mathcal{A}^*$ of length $k$ and each $i=0,\ldots,k-1$ define the map
\[\Psi_i\colon \mathcal{A} \to  \mathcal{A}\quad a\to \varphi(a)_i\]
that sends a letter $a$ to the $i$th letter of $\varphi(a)$.
	We will  use the same symbol to denote the extension of $\Psi_i$ by concatenation to  $\Psi_i\colon \mathcal{A}^{\Z}\rightarrow \mathcal{A}^{\Z}$.  For $m\geq 1$ and a word $\mathbf{i}=i_0\cdots i_{m-1}\in [0,k-1]^{m}$ put 
\[\Psi_{\mathbf{i}}=\Psi_{i_{m-1}}\circ\cdots\circ\Psi_{i_{0}}\] and let  $\Psi_{\mathbf{i}}$ denote the identity map if $\mathbf{i}$ is the empty word. 		
	
	For each $m\geq 0$ define the family of maps
\[ \mathrm{F}_{\varphi,m}=\{\Psi_{\mathbf{i}}\colon \mathcal{A}^{\Z}\to\mathcal{A}^{\Z}\mid \mathbf{i}\in [0,k-1]^m\}\]
that consists of all possible compositions of length $m$ of functions $\Psi_i$; in particular, $\mathrm{F}_0$ consists of the identity function.
\end{definition}

\begin{lemma}\label{lem:setFisfinite}
For any substitution $\varphi$ of constant length, the union $\F$ of the sets $\F_{\varphi,m}$, $m\geq 0$ is finite.
\end{lemma}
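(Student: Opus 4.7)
The plan is to show that $\F$ is a subset of the set $\F_{\A}$ of letter-induced maps introduced in Definition \ref{def:kernelmaps}, which is manifestly finite, of cardinality at most $|\A|^{|\A|}$.

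First I would observe that each of the generators $\Psi_i \colon \A^{\Z} \to \A^{\Z}$ belongs to $\F_{\A}$ by construction: $\Psi_i$ is, by definition, the extension by concatenation of the letter-to-letter map $a \mapsto \varphi(a)_i$. Next I would verify that $\F_{\A}$ is closed under composition. Indeed, if $f, g \in \F_{\A}$ are induced by $f', g' \colon \A \to \A$, then for any $(x_n)_n \in \A^{\Z}$,
\[
(f \circ g)((x_n)_n) = f((g'(x_n))_n) = (f'(g'(x_n)))_n,
\]
so $f \circ g$ is induced by $f' \circ g' \colon \A \to \A$, hence lies in $\F_{\A}$.

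Combining these two observations by a trivial induction on $m$, every map $\Psi_{\mathbf{i}} = \Psi_{i_{m-1}} \circ \cdots \circ \Psi_{i_0}$ with $\mathbf{i} \in [0,k-1]^m$ lies in $\F_{\A}$; the base case $m=0$ is the identity map, which is clearly induced by the identity $\A \to \A$. Therefore $\F_{\varphi,m} \subseteq \F_{\A}$ for every $m \geq 0$, and so $\F \subseteq \F_{\A}$. Since $|\F_{\A}| = |\A|^{|\A|}$ is finite, the proof is complete. There is no real obstacle here: the only point worth noting is that $\F$ can in principle be a proper subset of $\F_{\A}$ (not every letter-to-letter map need arise as a composition of the $\Psi_i$'s), but the crude bound $|\A|^{|\A|}$ suffices.
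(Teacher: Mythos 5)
Your argument is correct and is essentially the paper's own proof, which simply observes that $\F_{\varphi,m}\subset\F_{\A}$ for all $m\geq 0$ and that $\F_{\A}$ is finite; you have merely spelled out the closure of $\F_{\A}$ under composition and the induction on $m$ that the paper leaves implicit. Nothing to correct.
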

\begin{proof}
 Clearly, the set $\F_{\mathcal{A}}$ is finite and $\F_{\varphi,m}\subset \F_{\mathcal{A}}$ for all $m\geq 0$ and so $F$ is finite.
\end{proof}
	
	 	Note  that $\Psi_{\bm{i}}(T(x))=T(\Psi_{\bm{i}}(x))$ for all $x\in\mathcal{A}^{\Z}$ and $\bm{i}\in [0,k-1]^m$, $m\geq 0$.
	 	Furthermore, for  $\mathbf{i}=i_0\cdots i_{m-1}$ in  $[0,k-1]^{m}$ the map $\Psi_{\mathbf{i}}\colon \mathcal{A}^{\Z}\rightarrow \mathcal{A}^{\Z}$ sends a sequence $x=(x_n)_n$ to the sequence \begin{equation}\label{eq:descriptionofmapsPsi}((\varphi^{m}(x))_{k^{m}n+s_{\mathbf{i}}})_n,\quad\text{where}\quad s_{\mathbf{i}}=\sum_{l=0}^{m-1} i_{m-l-1}k^l. 
\end{equation} 
	In particular, $\F_{\varphi,m}=\F_{\varphi^m,1}$ for each $m\geq 1$ and, if $x\in\mathcal{A}^{\Z}$ is a fixed point of $\varphi$, then its $k$-kernel is given by  
\[\mathrm{K}_k(x)=\{\Psi_{\mathbf{i}}(x)\mid \mathbf{i}\in [0,k-1]^m, m\geq 0\}.\] 
	For other points in $X_{\varphi}$ we have the following lemma.
	
\begin{lemma}\label{lem:kernel} Let $\varphi\colon \mathcal{A}\rightarrow \mathcal{A}^*$ be a substitution of  length $k$. Let $x\in \mathcal{A}^{\mathbf{Z}}$. Let  $(x^i)_{i\geq 0}$ be a sequence of elements of $\mathcal{A}^{\mathbf{Z}}$, and let $(c_i)_{i\geq 0}$ be a sequence of elements of $[0,k-1]$ such that 
\begin{equation}\label{eq:representionlemkernel}x^0=x\quad \text{and}\quad x^i=T^{c_{i}}(\varphi(x^{i+1}))\quad \text{for}\quad i\geq 0.
\end{equation} 
The $k$-kernel of $x$ is given by
\begin{align*}
\mathrm{K}_k(x)  = & \{\Psi_{\mathbf{i}}(x^m) \mid m\geq 0, \mathbf{i}\in [0,k-1]^m, \sum_{l=0}^{m-1} i_{m-l-1}k^l\geq \sum_{l=0}^{m-1} c_lk^l\} \quad \cup \\
& \{\Psi_{\mathbf{i}}(T(x^m)) \mid m\geq 0, \mathbf{i}\in [0,k-1]^m, \sum_{l=0}^{m-1} i_{m-l-1}k^l < \sum_{l=0}^{m-1} c_lk^l\}. 
\end{align*}
\end{lemma}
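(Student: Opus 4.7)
The plan is to reduce the $k$-kernel of $x$ to the fibres of the $\Psi_{\mathbf{i}}$'s applied to the desubstitutions $x^m$, by first turning the one-step recurrence \eqref{eq:representionlemkernel} into a single $m$-step identity.

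First I would prove by induction on $m$ that
\[
x = T^{s_m}\bigl(\varphi^m(x^m)\bigr), \quad\text{where}\quad s_m = \sum_{l=0}^{m-1} c_l\,k^l,
\]
with the key algebraic identity $\varphi\circ T^c = T^{ck}\circ\varphi$ that holds because $\varphi$ has constant length $k$. The base case $m=1$ is just $x^0=T^{c_0}\varphi(x^1)$, and the induction step uses $x^{m}=T^{c_m}\varphi(x^{m+1})$ together with the length-$k$ commutation to obtain $s_{m+1}=s_m+c_m k^m$. Note that $0\leq s_m<k^m$.

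Next, fix $m\geq 0$ and $0\leq i<k^m$. I would rewrite
\[
(x_{k^m n+i})_n \;=\; \bigl((\varphi^m(x^m))_{k^m n + (i+s_m)}\bigr)_n
\]
and split into two cases according to whether $i+s_m$ lies in $[0,k^m)$ or in $[k^m,2k^m)$. If $i+s_m<k^m$, then $r:=i+s_m$ is the unique element of $[0,k^m)$ with $s_{\mathbf{i}}=r$ for some $\mathbf{i}\in [0,k-1]^m$, and by \eqref{eq:descriptionofmapsPsi} the kernel sequence equals $\Psi_{\mathbf{i}}(x^m)$; moreover $s_{\mathbf{i}}=i+s_m\geq s_m$. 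If instead $i+s_m\geq k^m$, I would write $i+s_m=k^m+r$ with $0\leq r<k^m$ and use the identity $\varphi^m(T(x^m))=T^{k^m}(\varphi^m(x^m))$, which is again forced by the constant length $k^m$ of $\varphi^m$, to re-index and obtain
\[
\bigl((\varphi^m(x^m))_{k^m n + k^m + r}\bigr)_n \;=\; \bigl((\varphi^m(T(x^m)))_{k^m n + r}\bigr)_n \;=\; \Psi_{\mathbf{i}}(T(x^m)),
\]
where now $s_{\mathbf{i}}=r=i+s_m-k^m<s_m$. This shows that every element of $\mathrm{K}_k(x)$ lies in the right-hand side. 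Conversely, given $\mathbf{i}\in[0,k-1]^m$, setting $i:=s_{\mathbf{i}}-s_m$ (in the first case) or $i:=s_{\mathbf{i}}+k^m-s_m$ (in the second) produces an index $i\in[0,k^m)$ whose associated kernel sequence is exactly the chosen $\Psi_{\mathbf{i}}(x^m)$ or $\Psi_{\mathbf{i}}(T(x^m))$, giving the reverse inclusion.

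The content is essentially bookkeeping: the only step that requires genuine care is keeping the two identities $\varphi\circ T^c=T^{ck}\circ\varphi$ and $\varphi^m\circ T=T^{k^m}\circ\varphi^m$ aligned with the $k$-adic-style decomposition $i+s_m=k^m\cdot\varepsilon+r$ with $\varepsilon\in\{0,1\}$. Once the inductive identity $x=T^{s_m}\varphi^m(x^m)$ is in place and this case split is made, the correspondence with the $\Psi_{\mathbf{i}}$'s is forced by the fact that each $r\in[0,k^m)$ has a unique base-$k$ expansion, i.e.\ a unique $\mathbf{i}\in[0,k-1]^m$ with $s_{\mathbf{i}}=r$. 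I do not anticipate any serious obstacle beyond this indexing.
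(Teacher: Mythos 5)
Your proposal is correct and follows essentially the same route as the paper's (much terser) proof: establish $x=T^{s_m}(\varphi^m(x^m))$ with $s_m=\sum_{l=0}^{m-1}c_lk^l$ and then match kernel indices against the description $\Psi_{\mathbf{i}}(x^m)=((\varphi^m(x^m))_{k^mn+s_{\mathbf{i}}})_n$, with the split into $\Psi_{\mathbf{i}}(x^m)$ versus $\Psi_{\mathbf{i}}(T(x^m))$ governed by whether $i+s_m$ overflows $k^m$. Your write-up simply makes explicit the case analysis that the paper leaves to the reader.
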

\begin{proof}
It is enough to note that $x=T^{t_{m}}(\varphi^m(x^m))$, where $t_m=\sum_{l=0}^{m-1}c_lk^l$ for all $m\geq 1$, and that 
\[\Psi_{\mathbf{i}}(x^m)=(\varphi^m(x^m)_{k^mn+ s_{\mathbf{i}}})_n,\] where
\[\quad s_{\mathbf{i}}=\sum_{l=0}^{m-1} i_{m-l-1}k^l\quad \text{for} \quad \mathbf{i}=i_0\dots i_{m-1}\in [0,k-1]^m, \ m\geq 0. \qedhere\] 
\end{proof}

Let $(x_i)_{i\in \N}$ be a sequence over some set. We say that a sequence $(x_i)_i$ is ultimately periodic with period $m$ if there exist integers $p<q$ such that $q-p=m$ and $x_i=x_{i+m}$ for $i\geq p$. We say that the period $m$ of an ultimately periodic sequence $(x_i)_i$ is minimal if no $n<m$ is a period of $(x_i)_i$. If $p$ can be taken to be $0$, we say that $(x_i)_i$ is (purely) periodic with period $m$ (resp.\ minimal period $m$). The following proposition gathers the main closure properties of quasi-fixed points of substitutions.

\begin{proposition}\label{prop:gen_points_are_automatic}  Let $\varphi\colon \mathcal{A}\rightarrow\mathcal{A}^*$ be a substitution. \begin{enumerate}
\item\label{prop:gen_points_are_automatic0} Let $r\geq 1$ and let $\iota_r$ be the $r$th higher block presentation map. Then $x\in\A^{\Z}$ is a quasi-fixed point of $\varphi$ of period $m$ (resp.\ of minimal period $m$) if and only if $\iota_r(x)$  is a quasi-fixed point of $\hat{\varphi_r}$ of period $m$ (resp.\ of minimal period $m$).
\item\label{prop:gen_points_same_for_power} For any $n\geq 1$ the sets of quasi-fixed points of $\varphi$ and $\varphi^n$ coincide.
\item\label{prop:gen_points_are_automatic1}   For each $m\geq 1$, the set of quasi-fixed points of $\varphi$ of period $m$ (resp.\ of minimal period $m$) is closed under the left and right shifts. Furthermore, the set of nonperiodic quasi-fixed points of $\varphi$ in $X_{\varphi}$ of period $m$ (resp.\ of minimal period $m$) is closed under $\varphi$ and $R_{\varphi}$.
\item\label{prop:gen_points_are_automatic3}   Let $z\in X_{\varphi}$ be nonperiodic. Then, $z$ is a quasi-fixed point of $\varphi$ of period $m$  (resp.\ of minimal period $m$) if and only if $\bm{R_{\varphi}}(z)$ is an ultimately periodic sequence of period $m$  (resp.\ of minimal period $m$). In particular,  periods of $z$ as a quasi periodic point of $\varphi$ are exactly the multiples of its minimal period.
\item\label{prop:gen_points_are_automatic2}   Every quasi-fixed point of $\varphi$ is substitutive; furthermore, if $\varphi$ is of  length $k$, then every quasi-fixed point of $\varphi$ is $k$-automatic.
\end{enumerate}
\end{proposition}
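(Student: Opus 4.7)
The plan is to reduce to the explicit classification in Proposition~\ref{prop:char_quasi_periodic_one} and handle the two resulting cases separately. Since substitutivity and $k$-automaticity are closed under shifts by Lemma~\ref{lem:closure}\eqref{lem:closure2}, we may assume without loss of generality that the quasi-fixed point $x$ is itself one of the two listed forms. In the first form $x={}^\omega\!(\varphi^m)(b).(\varphi^m)^\omega(a)$ the sequence $x$ is by construction a two-sided fixed point of $\varphi^m$, hence purely substitutive; if $\varphi$ has length $k$ then $\varphi^m$ has length $k^m$, so $x$ is $k^m$-automatic and therefore $k$-automatic by Lemma~\ref{lem:closure}\eqref{lem:closure3}.

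The interesting case is $x=(\varphi^m)^{\omega,i}(a)$ with $\varphi^m(a)=vav'$ and both $v,v'$ nonempty. To show substitutivity I would apply Lemma~\ref{lem:closure}\eqref{lem:closure1} and treat the two one-sided halves of $x$. For the right half $x^+=av'\varphi^m(v')\varphi^{2m}(v')\dots$ I adjoin a fresh symbol $\hat a$ and define a growing substitution $\chi\colon \mathcal{A}\cup\{\hat a\}\to(\mathcal{A}\cup\{\hat a\})^*$ by $\chi(\hat a)=\hat a v'$ and $\chi|_\mathcal{A}=\varphi^m$. The letter $\hat a$ is right-prolongable, so $\chi^\omega(\hat a)=\hat a v'\varphi^m(v')\varphi^{2m}(v')\dots$, and the coding sending $\hat a$ to $a$ and fixing $\mathcal{A}$ maps this to $x^+$. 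The left half is handled by the mirror construction $\chi'(\hat b)=v\hat b$, possibly followed by a single shift, which is legitimised by Lemma~\ref{lem:closure}\eqref{lem:closure2}.

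The $k$-automatic version of this second case is the main obstacle, since the augmentation above is of non-constant length and cannot be recycled verbatim. The plan is to argue via the $k$-kernel instead. If $x$ is periodic there is nothing to prove by Lemma~\ref{lem:closure}\eqref{lem:closure4}. Otherwise $x$ is nonperiodic and lies in $X_\varphi$ (every factor of $x$ appears in some $\varphi^{mn}(a)$), so Part~\eqref{prop:gen_points_are_automatic3} of the present proposition, already established, implies that $\bm{R_\varphi}(x)$ is ultimately periodic; in particular $\{R_\varphi^n(x):n\geq 0\}$ is a finite set. Feeding this into the explicit formula of Lemma~\ref{lem:kernel} exhibits $\mathrm{K}_k(x)$ as the image of this finite set, together with its image under $T$, under maps $\Psi_{\mathbf i}$; since the collection of such maps $\F=\bigcup_{n\geq 0}\F_{\varphi,n}$ is itself finite by Lemma~\ref{lem:setFisfinite}, $\mathrm{K}_k(x)$ is finite, and Lemma~\ref{lem:closure}\eqref{lem:closure5} yields $k$-automaticity of $x$.
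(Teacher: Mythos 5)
Your proposal addresses only claim~\eqref{prop:gen_points_are_automatic2} of the proposition. Claims \eqref{prop:gen_points_are_automatic0}--\eqref{prop:gen_points_are_automatic3} --- compatibility with the higher block presentation, invariance under passing to powers $\varphi^n$, closure under shifts, $\varphi$ and $R_{\varphi}$, and the equivalence between being a quasi-fixed point of period $m$ and ultimate periodicity of $\bm{R_{\varphi}}(z)$ --- are never argued. This is not a cosmetic omission: your treatment of the constant-length case of \eqref{prop:gen_points_are_automatic2} explicitly invokes claim~\eqref{prop:gen_points_are_automatic3} (``already established'') to conclude that $\{R_{\varphi}^n(x) : n\geq 0\}$ is finite, so the one part you do prove rests on a part you have not proved. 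Claim~\eqref{prop:gen_points_are_automatic3} is the most delicate statement in the proposition; the paper establishes it by combining the classification of Proposition~\ref{prop:char_quasi_periodic_one} with an analysis showing that desubstitution eventually lands on a point $y$ with $R_{\varphi}^m(y)=y$ (the claims (a) and (b) about orbits of $(\varphi^m)^{\omega,i}(a)$ and of fixed points), and the converse direction uses the closure properties from claim~\eqref{prop:gen_points_are_automatic1}. None of this machinery appears in your write-up, so as it stands the proof is incomplete and, on its own terms, circular at the point where \eqref{prop:gen_points_are_automatic3} is cited.

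For what it is worth, the portion you do write out is essentially sound and close to the paper's argument. The substitutive case via the augmented substitution $\chi(\hat a)=\hat a v'$, $\chi|_{\mathcal{A}}=\varphi^m$ together with Lemma~\ref{lem:closure}\eqref{lem:closure1} and \eqref{lem:closure2} is exactly the paper's $\spadesuit$-construction. For the constant-length case the paper avoids claim~\eqref{prop:gen_points_are_automatic3} altogether: it reduces to period $1$ by replacing $\varphi$ with $\varphi^m$ (using Lemma~\ref{lem:closure}\eqref{lem:closure3}), writes $x$ as a shift of some $x'$ with $x'=T^c\varphi(x')$, and applies Lemma~\ref{lem:kernel} to the constant representation $x^i=x'$, $c_i=c$; the kernel is then contained in $\{\Psi(x'),\Psi(T(x'))\mid\Psi\in\mathrm{F}\}$ with no appeal to recognisability or to $R_{\varphi}$. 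If you adopt that reduction, your automaticity argument becomes self-contained; but you would still need to supply proofs of claims \eqref{prop:gen_points_are_automatic0}--\eqref{prop:gen_points_are_automatic3} to prove the proposition as stated.
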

\begin{proof}  For $x,y\in \mathcal{A}^{\Z}$, we will write (in this proof only) $x\sim y$ if $x$ and $y$ are shifts of each other, i.e.\ $x=T^c(y)$ for some $c\in\Z$. Note that if $x\sim y$, then $\varphi(x)\sim \varphi(y)$ for any substitution $\varphi\colon \mathcal{A}\to \mathcal{A}^*$. 

The first claim follows directly from the formula
\[\iota_r T^c\varphi^m(z) = T^c\hat{\varphi_r}^m\iota_r(z)
\] with $z\in \A^{\Z}$, $c\in\Z$, and $m\geq 1$.

The second claim follows from the fact that for any quasi-periodic point $z$ of period $m$ and any $n\geq 1$ we have
\[z\sim\varphi^m(z)\sim \varphi^{2m}(z)\sim \dots \sim\varphi^{nm}(z),
\] 
	since $z\sim \varphi^m(z)$ and $\varphi$ preserves the relation $\sim$.
	Since \[\varphi^m(T^n(z))\sim \varphi^m(z) \sim z \sim T^n(z)\] for all $c\in\Z$, the set of quasi-fixed points of period $m$ is preserved under the shift $T^n$ for any $n\in\Z$.
	 Since the set of quasi-fixed points of period $m$ is closed under the shift operation $T$ and its inverse $T^{-1}$, the set of quasi-fixed points of  minimal period $m$ is closed under both $T$ and $T^{-1}$. Since $z\sim \varphi^m(z)$ and $\varphi$ preserves the relation $\sim$ we also have that \[\varphi^m(\varphi(z))=\varphi(\varphi^m(z))\sim \varphi(z),\] and hence the set of quasi-fixed points of period $m$ is also preserved under $\varphi$. Now we will  show  \eqref{prop:gen_points_are_automatic3} and use it to deduce the rest of the claims in \eqref{prop:gen_points_are_automatic1}.  

Let $z\in X_{\varphi}$ be nonperiodic. It is enough to show both implications for period $m$, the claims for minimal period $m$ will then obviously follow. Assume that  $z$ is a quasi-fixed point of $\varphi$ of period $m$.  By Proposition \ref{prop:char_quasi_periodic_one}, there exists $x\in X_{\varphi}$ which is either a fixed point of $\varphi^m$ or is of the form $x=(\varphi^m)^{\omega,i}(a)$ such that $z\in \mathcal{O}(x)$.  We have the following easy claims.
\begin{enumerate}
\item\label{orbit1} If  $x=(\varphi^m)^{\omega,i}(a)$, then $R_{\varphi}^{k}(z)=x$ for some $k\geq 0$.
\item\label{orbit2} If $x$ is a fixed point of $\varphi^m$, then either $R_{\varphi}^{k}(z)=x$ or $R_{\varphi}^{k}(z)=T^{-1}x$  for some $k\geq 0$.
 \end{enumerate}
To see that \eqref{orbit1} holds,  let $a\in\mathcal{A}$ and let $x=(\varphi^m)^{\omega,i}(a)$, i.e.\ $\varphi^m(a)=vav'$ for some nonempty words $v,v'$ with $|v|=i$, and \[x=\ldots\varphi^{2m}(v)\varphi^m(v)v.av'\varphi^m(v')\varphi^{2m}(v')\ldots\] Note that $x_0=a$ and that  $x=T^{i_{k}}(\varphi^{(k+1)m}(x))$ for each $k\geq 0$, where\[0< i_k = |\varphi^{km}(v)\dots\varphi^m(v)v| <|\varphi^{(k+1)m}(a)|.\] Since  both $v$ and $v'$ are nonempty, both $i_k$ and $|\varphi^{(k+1)m}(a)|-i_k$ go to infinity as $k\to\infty$. Hence, every point in  $\mathcal{O}(x)$ can be obtained by shifting $\varphi^{km}(x)$ by some $0\leq c < |\varphi^{km}(x_0)|$ for $k$ big enough. Claim \eqref{orbit2} can be shown in a similar way.  Note that every $y\in X_{\varphi}$, which is a  fixed point of $\varphi^m$, a shift  of a fixed point of $\varphi^m$ by $T^{-1}$, or is of the form $y=(\varphi^m)^{\omega,i}(a)$ satisfies $R^m_{\varphi}(y)=y$, i.e.\ $\bm{R_{\varphi}}(y)$ is a (purely) periodic sequence of period $m$. Hence, by \eqref{orbit1} and \eqref{orbit2}, $\bm{R_{\varphi}}(z)$ is an ultimately periodic sequence of period $m$.

Now assume that $\bm{R_{\varphi}}(z)$ is an ultimately periodic sequence with period $m$. Then, there exist some $k\geq 1$ and a quasi-fixed point $x\in X_{\varphi}$ of period $m$ such that \begin{equation}\label{eq:gen_points} R_{\varphi}^{k}(z)=x.\end{equation}  Applying $\varphi^k$ to the equation \eqref{eq:gen_points} we see that
\[z\sim \varphi^k(R_{\varphi}^{k}(z))=\varphi^k(x).\] Since  the set of quasi-fixed points of period $m$ is closed under $\varphi$ and the shift, $z$ is a quasi-fixed point of $\varphi$ of period $m$. The last claim in \eqref{prop:gen_points_are_automatic3} now follows, since the periods of an ultimately periodic sequence $\bm{R_{\varphi}}(z)$ are exactly the multiples of its minimal period. Since $\bm{R_{\varphi}}(R_{\varphi}(z))$ is equal to $\bm{R_{\varphi}}(z)$ shifted by one and  $\bm{R_{\varphi}}(\varphi(z))$ is equal to $\bm{R_{\varphi}}(z)$ with 0 added at the beginning, the rest of claims in \eqref{prop:gen_points_are_automatic1} follow as well.

	Now we will  show the last claim. We first treat the case of general substitutions.  
	 Since $x$ is substitutive if and only if the one-sided sequences $(x_n)_{n\geq 0}$ and $(x_n)_{n<0}$ are substitutive, and substitutive sequences are preserved under the left and right shift,  by  Proposition  \ref{prop:char_quasi_periodic_one}, it is enough to show that a one-sided sequence $v\varphi^m(v)\varphi^{2m}(v)\ldots$ is substitutive for any word $v$ and any $m\geq 1$.
	This is easy to see (and has been observed several times \cite[Lem.\ 5]{CRSZ} or \cite[Lem.\ 1.2(i)]{BKK}): Let $\spadesuit$ be a new symbol not lying in the alphabet $\A$ and consider a new substitution $\varphi'$ defined on the alphabet $\A'=\A\cup \{\spadesuit\}$ given by 
\[\varphi'(\spadesuit)=\spadesuit v, \quad \varphi'|_{\A}=\varphi^m\] together with the coding $\tau$ which sends $\spadesuit$ to $a$. 
	Then, $x=\tau(x')$, where $x'$ is the fixed point of $\varphi'$ starting with $\spadesuit$.

	Now assume that $\varphi$ is of constant length. In this case, the claim can be deduced from  \cite[Thm.\ 5.3.4]{AlloucheShallit-book} and \cite[Thm.\ 6.7.2]{AlloucheShallit-book}, or \cite[Lem.\ 1.2(ii)]{BKK}. 
	For completeness, we provide a short proof.
	We will use the kernel-based characterisation of automaticity (Lemma \ref{lem:closure}\eqref{lem:closure5}) and the sets $\mathrm{F}_{\varphi,m}$ from Definition \ref{def:kernelmaps}.
	Let $x$ by a quasi-fixed point of $\varphi$ of period $m\geq 1$. 		
	Since the sets of $k$-automatic and $k^m$-automatic sequences coincide, by taking the $m$th power of $\varphi$, we may assume that $x$ is a quasi-fixed point of period 1. 
	By Proposition \ref{prop:char_quasi_periodic_one}, $x$ is a (left or right) shift of some $x'$ such that $x'=T^c\varphi(x')$ for some $0\leq c<k$.
	 Let $\mathrm{F}$ be the union of the sets $\mathrm{F}_{\varphi,m}$, $m\geq 0$ and recall that $\F$ is finite by Lemma \ref{lem:setFisfinite}.
	By Lemma \ref{lem:kernel}, 
\[\mathrm{K}_k(x')\subset \{\Psi(x')\mid \Psi\in \mathrm{F}\}\cup \{\Psi(T(x'))\mid \Psi\in \mathrm{F}\}.\]
Since $\mathrm{F}$ is finite, the $k$-kernel of $x'$ is also finite.
	 Thus, $x'$ is $k$-automatic and so $x$ is $k$-automatic as a shift of a $k$-automatic sequence.
\end{proof}

\section{Automatic sequences in automatic systems}\label{sec:automatic sequences in systems}

In this section we provide a complete characterisation of the set of automatic sequences in an automatic system in terms of the quasi-fixed points of the underlying constant-length substitution.

\begin{theorem} \label{thm:characteraytomaticsequences} Let $k\geq 2$. Let $\varphi$ be a substitution of length $k$. Let $\pi\colon X_{\varphi}\to Y$ be a factor map onto some subshift $Y$. The following hold.\begin{enumerate}
\item \label{thm:characteraytomaticsequences1} A sequence $y\in Y$ is $k$-automatic if and only if $y=\pi(x)$ for some quasi-fixed point $x$ of $\varphi$.
\item \label{thm:characteraytomaticsequences2} If $y\in Y$ is $k$-automatic and nonperiodic, then all  points in $\pi^{-1}(y)$ are quasi-fixed points of $\varphi$.
\end{enumerate}
\end{theorem}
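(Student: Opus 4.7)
The plan is to reduce to the case where $\pi$ is a coding $\tau$ and $\varphi$ is ambi-idempotent, derive the ``if'' direction of (1) immediately, handle (2) via a kernel-based pigeonhole argument, and finally treat the periodic case of (1) using the minimal subsystem structure.
Theorem \ref{thm:CurtisHedlundLyndon} and Proposition \ref{prop:higherblocksubstitutionproperties}\eqref{prop:higherblocksubstitutionproperties4} express $\pi$ (up to a shift of $Y$, which does not affect the conclusions) as a coding out of the higher block presentation $X_{\hat{\varphi}_r}$, and Proposition \ref{prop:gen_points_are_automatic}\eqref{prop:gen_points_are_automatic0} identifies quasi-fixed points of $\varphi$ with those of $\hat{\varphi}_r$; Proposition \ref{prop:gen_points_are_automatic}\eqref{prop:gen_points_same_for_power} together with Lemma \ref{lem:idempotent} lets us pass to a power of $\varphi$ and assume $\varphi$ ambi-idempotent.
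For the ``if'' direction of (1), a quasi-fixed point is $k$-automatic by Proposition \ref{prop:gen_points_are_automatic}\eqref{prop:gen_points_are_automatic2}, and $k$-automaticity is preserved by codings and by the higher block map $\iota_r$ (each coordinate of $\iota_r(x)_{k^m n+i}$ is a shift of a $k$-kernel element of $x$).

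For (2), fix $y\in Y$ that is $k$-automatic and nonperiodic and $x\in\tau^{-1}(y)$. By recognizability, $x = T^{s_m}\varphi^m(x^m)$ with $x^m = R_{\varphi}^m(x)$ nonperiodic and $s_m\in[0,k^m-1]$, and the block decomposition gives
\[
(y_{k^m n+i})_n \;=\; \tau\Psi_{\bm{r}}(T^q x^m),\qquad i\in[0,k^m-1],
\]
with $\Psi_{\bm{r}}\in \F_{\varphi,m}$ and $q\in\{0,1\}$ determined by $i+s_m$. Since these left-hand sides lie in the finite set $\mathrm{K}_k(y)$, each $\tau\Psi(x^m)$ with $\Psi\in\F_{\varphi,m}$ lies in the finite set $\mathrm{K}_k(y)\cup T^{-1}\mathrm{K}_k(y)$. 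The subsets $\F_{\varphi,m}\subseteq\F_{\mathcal{A}}$ are contained in a finite universe (Lemma \ref{lem:setFisfinite}) and satisfy the deterministic recursion $\F_{\varphi,m+1}=\bigcup_i \Psi_i\circ\F_{\varphi,m}$, so the sequence $(\F_{\varphi,m})_m$ is eventually periodic; fix an infinite arithmetic progression $M$ on which $\F_{\varphi,m}$ equals a common value $\F^*$. Applying pigeonhole to the finite-valued tuples $(\tau\Psi(x^m))_{\Psi\in\F^*}$ for $m\in M$ produces $m_1\in M$ and an infinite $I\subseteq M\cap(m_1,\infty)$ such that $\tau\Psi(x^{m_1})=\tau\Psi(x^{m'})$ for all $\Psi\in\F^*$ and $m'\in I$. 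As $\bm{r}$ ranges over $[0,k-1]^{m_1}$ the offsets $s_{\bm{r}}$ sweep $[0,k^{m_1}-1]$, so concatenating position-by-position yields $\tau\varphi^{m_1}(x^{m'}) = \tau\varphi^{m_1}(x^{m_1}) = T^{-s_{m_1}}y$, i.e.\ $\varphi^{m_1}(x^{m'})\in\tau^{-1}(T^{-s_{m_1}}y)$. Lemma \ref{lem:finitely_many_representations} bounds this fiber by some constant $K$ independent of $m_1$ and $m'$, so among the infinitely many points $\varphi^{m_1}(x^{m'})$, $m'\in I$, two coincide, say $\varphi^{m_1}(x^{m'})=\varphi^{m_1}(x^{m''})$ with $m'<m''$. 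The injectivity of $\varphi^{m_1}$ on nonperiodic points (recognizability) then forces $x^{m'}=x^{m''}$, so $\bm{R_{\varphi}}(x)$ is eventually periodic and Proposition \ref{prop:gen_points_are_automatic}\eqref{prop:gen_points_are_automatic3} concludes that $x$ is a quasi-fixed point of $\varphi$.

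It remains to treat the periodic case of (1). If $y\in Y$ is periodic, then $\tau^{-1}(\mathcal{O}(y))$ is a subsystem of $X_{\varphi}$ and contains, by Theorem \ref{thm:subsystems}\eqref{thm:subsystems2}, a minimal subsystem $N=X_{\varphi,b}$ with $\varphi|_{\mathcal{A}_b}$ primitive; minimality of $\mathcal{O}(y)$ forces $\tau(N)=\mathcal{O}(y)$. If $N$ is finite then $\varphi$ is a self-map of the finite set $N$, so every point of $N$ is $\varphi$-periodic and thus a quasi-fixed point; if $N$ is infinite, ambi-idempotency together with primitivity of $\varphi|_{\mathcal{A}_b}$ supplies a $\varphi$-fixed point ${}^{\omega}\!\varphi(b').\varphi^{\omega}(a')\in N$ for suitable $a',b'$, again a quasi-fixed point. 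Choosing an appropriate shift yields a quasi-fixed point $x\in\tau^{-1}(y)$ via Proposition \ref{prop:gen_points_are_automatic}\eqref{prop:gen_points_are_automatic1}. The hardest step is the pigeonhole argument in (2): the finite $k$-kernel of $y$ only controls the $\tau$-images of the sequences $\Psi(x^m)$, and turning this into an \emph{exact} coincidence $x^{m'}=x^{m''}$ rather than a mere $\tau$-collision is precisely where the fiber bound of Lemma \ref{lem:finitely_many_representations} and the injectivity of $\varphi^m$ on nonperiodic points (both consequences of recognizability) interlock with the eventual periodicity of $(\F_{\varphi,m})_m$.
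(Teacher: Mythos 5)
Your core pigeonhole argument for the nonperiodic case is correct and is essentially the paper's, with two variations worth noting. Where the paper first passes to a \emph{column-constant} power of $\varphi$ so that $\F_{\varphi,m}$ is literally constant in $m$ (Lemma \ref{lem:all_pairs}), you exploit the eventual periodicity of the orbit of $\F_{\varphi,1}$ under the recursion $\F_{\varphi,m+1}=\bigcup_i\Psi_i\circ\F_{\varphi,m}$ and restrict to an arithmetic progression; these are equivalent. More substantively, the paper applies the fibre bound of Lemma \ref{lem:finitely_many_representations} to the point $y'=\tau(x^{r-1})$, which forces it to first prove (Lemma \ref{lem:periodicpoints}, which needs ambi-idempotency via Theorem \ref{thm:subsystems}) that all $\tau(x^i)$ are nonperiodic; you instead apply the fibre bound to $T^{-s_{m_1}}y$, which is nonperiodic for free, and recover $x^{m'}=x^{m''}$ from $\varphi^{m_1}(x^{m'})=\varphi^{m_1}(x^{m''})$ via injectivity of $\varphi^{m_1}$ on nonperiodic points (recognisability). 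This bypasses Lemma \ref{lem:periodicpoints} entirely and removes ambi-idempotency from the nonperiodic case, which is a genuine simplification.

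The genuine gap is the opening reduction ``pass to a power of $\varphi$ and assume $\varphi$ ambi-idempotent.'' Replacing $\varphi$ by $\varphi^n$ replaces $X_{\varphi}$ by $X_{\varphi^n}$, and in general $X_{\varphi^n}\subsetneq X_{\varphi}$ (Remark \ref{rem:whenpowersystemsagree}); the sets of quasi-fixed points agree, but $\pi$ need not restrict to a surjection $X_{\varphi^n}\to Y$, so a $k$-automatic $y$ may have no preimage in $X_{\varphi^n}$ at all, and points of $\pi^{-1}(y)\cap\bigl(X_{\varphi}\setminus X_{\varphi^n}\bigr)$ are not covered by the reduced statement. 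The paper spends the final paragraph of its proof on exactly this: it restricts $\varphi$ to the subalphabet $\A'$ of letters occurring in $\varphi(\A)$, where $X_{\varphi'}=X_{(\varphi')^n}$ by \cite[Lem.\ 5.3]{BPR23}, and then shows every point of $X_{\varphi}\setminus X_{\varphi'}$ is a shift of a $\varphi$-periodic point. In your architecture the gap only bites in the periodic case of (1), since that is the only place you invoke ambi-idempotency (through Theorem \ref{thm:subsystems}), but as written the reduction is unjustified and needs this supplementary argument. A second, minor, slip: in the finite-$N$ case you assert every point of $N$ is $\varphi$-periodic; $\varphi$ acts on the finite orbit $N\cong\Z/p\Z$ affinely by $j\mapsto kj+c_0$, which need not be a bijection, so points need not be $\varphi$-periodic --- but each $z\in N$ still satisfies $\varphi^m(z)\in N=\mathcal{O}(z)$, hence $z=T^c\varphi^m(z)$ for some $c$, and the quasi-fixed conclusion survives.
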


	The assumption that $z\in Y$ is nonperiodic in the last claim is clearly necessary: it can happen that $\tau$ maps an (uncountable) subsystem of $X_{\varphi}$ onto a periodic subsystem of $Y$; for concreteness consider e.g.\ Example \ref{ex:periodic} below.
\begin{example}\label{ex:periodic} Let $\mathcal{A}=\{0,1,2,3\}$ and let $\varphi\colon \mathcal{A}\rightarrow \mathcal{A}^*$ be a substitution given by 
\[\varphi(0)=0123,\quad \varphi(1)=1031,\quad \varphi(2)=2332,\quad \varphi(3)=3223.\] Let $\tau\colon\mathcal{A}\rightarrow \{0,1,2\}$ be a coding identifying the letters $2,3\in\mathcal{A}$, i.e.\ \[\tau(0)=0,\quad \tau(1)=1,\quad \tau(2)=\tau(3)=2.\] The constant sequence $z$ of 2's belongs to $Y=\tau(X_{\varphi})$ and $\tau^{-1}(z)\subset X_{\varphi}$ is the Thue--Morse system generated by $\varphi|_{\{2,3\}}$. Since the Thue--Morse system is uncountable and there are only countably many substitutive sequences, it follows that the set $\tau^{-1}(z)$ contains some sequences that are not substitutive.
\end{example}

In the case of an infinite minimal $k$-automatic system its set of automatic sequences can be characterised in terms of the arithmetic properties of the ring $\Z_k$, its equicontinuos factor (see Section \ref{sec:k-adic} for information about $\Z_k$ and the definition of the factor map to $\Z_k$).

\begin{corollary}\label{cor:automaticcorrespondtorational}
 Let $k\geq 2$.
 	 Let $\varphi$ be a primitive substitution of length $k$, let $\mathrm{P}\subset X_{\varphi}$ be the set of $\varphi$-periodic points, and let $\tau$ be a coding. 
 	 Assume $X=\tau(X_{\varphi})$ is aperiodic and let $\kappa\colon X\to \Z_k$ be the factor map sending $\tau(\mathrm{P})$ to $0\in \Z_k$.
 	Then, $x\in X$ is automatic if and only if $\kappa(x)\in \Z_k$ is rational.
 	For $x=\tau(x')$ with $T^m\varphi^c(x')=x'$, $m\in\Z$, $c\geq 1$ we have $\kappa(x)=m(1-k^c)^{-1}$.
\end{corollary}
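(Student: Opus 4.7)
The plan is to reduce everything to the canonical factor map $\kappa'\colon X_{\varphi}\to \Z_k$ constructed in Section \ref{sec:k-adic} and then apply Theorem \ref{thm:characteraytomaticsequences}. First I would invoke Theorem \ref{thm:MY}: since $X = \tau(X_{\varphi})$ is aperiodic, there exists a factor map $\kappa'\colon X_{\varphi}\to \Z_k$ with $\kappa' = \kappa \circ \tau$, and by Remark \ref{rem:factormapuniqueness} this $\kappa'$ is the canonical factor map of Section \ref{sec:k-adic} (the unique one sending $\varphi$-periodic points to $0$, which is compatible with $\kappa$ sending $\tau(\mathrm{P})$ to $0$). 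The key algebraic identities to be used are
\[
\kappa'(T^n(y)) = \kappa'(y) + n \qquad\text{and}\qquad \kappa'(\varphi^c(y)) = k^c\, \kappa'(y).
\]

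For the forward implication and the explicit formula, suppose $x\in X$ is $k$-automatic. By Theorem \ref{thm:characteraytomaticsequences}\eqref{thm:characteraytomaticsequences1}, $x = \tau(x')$ for some quasi-fixed point $x'$ of $\varphi$, so $T^m\varphi^c(x') = x'$ for some $m\in\Z$, $c\geq 1$. Applying $\kappa'$ yields $\kappa'(x') = m + k^c\kappa'(x')$, i.e.\ $(1-k^c)\kappa'(x') = m$. Since $1-k^c\equiv 1\pmod{k}$, it is a unit in $\Z_k$, so
\[
\kappa(x) = \kappa'(x') = m(1-k^c)^{-1},
\]
which is rational and gives the closed formula.

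For the converse, fix $x\in X$ with $\kappa(x)=z$ rational. By Remark \ref{rem:rationals}, write $z = m(1-k^c)^{-1}$ with $m\in\Z$, $c\geq 1$. Pick any preimage $x'\in \tau^{-1}(x)\subset X_{\varphi}$; then $\kappa'(x')=z$. Setting $f := T^m\circ \varphi^c$, a direct calculation with the identities above gives $\kappa'(f(x')) = m + k^cz = (1-k^c)z + k^cz = z$, so $f$ sends the fibre $F = (\kappa')^{-1}(z)$ into itself. Next I would note that since $X$ is aperiodic so is $X_{\varphi}$ (the coding $\tau$ intertwines the shifts), hence by recognizability (Section \ref{sec:recog}) the map $\varphi$, and therefore $f$, is injective on $X_{\varphi}$. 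Combining Remarks \ref{rem:finitecphi} and \ref{rem:factormapuniqueness} bounds $|F|\leq |\mathcal{A}|^3$, so $f|_F$ is a bijection of a finite set, giving $f^N|_F = \mathrm{id}_F$ for some $N\geq 1$. Unwinding $f^N$ as $T^{M}\varphi^{cN}$ with $M = m(1+k^c+\cdots+k^{(N-1)c})$ yields $T^M\varphi^{cN}(x') = x'$, so $x'$ is a quasi-fixed point of $\varphi$ and $x = \tau(x')$ is $k$-automatic by Theorem \ref{thm:characteraytomaticsequences}\eqref{thm:characteraytomaticsequences1}.

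The main obstacle is the finiteness of the fibres of $\kappa'$: Lemma \ref{lem:finitely_many_representations} does not apply directly since $\Z_k$ is not a subshift, but Remarks \ref{rem:finitecphi} and \ref{rem:factormapuniqueness} together supply exactly the bound needed to run the pigeonhole argument on $f|_F$. Once the fibre is known to be finite and $f$ is known to be injective on $X_{\varphi}$, the rest of the converse is essentially a one-line combinatorial observation.
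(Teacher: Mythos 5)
Your proposal is correct, and its first half (lifting $\kappa$ to $\kappa'$ via Theorem \ref{thm:MY}, identifying $\kappa'$ via Remark \ref{rem:factormapuniqueness}, and deriving $\kappa(x)=m(1-k^c)^{-1}$ from $\kappa'(x')=m+k^c\kappa'(x')$) is exactly the paper's argument. Where you genuinely diverge is the converse implication (rational $\Rightarrow$ automatic): the paper dispatches this in one sentence, citing only the identity \eqref{eq:simplearith} and Remark \ref{rem:rationals}, which at face value only shows that quasi-fixed points land on rationals and that every rational has the right shape --- it does not by itself show that \emph{every} point of the fibre $\kappa^{-1}(z)$ over a rational $z$ is automatic. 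You supply this missing step explicitly: the map $f=T^m\circ\varphi^c$ preserves the fibre $F=(\kappa')^{-1}(z)$, is injective on the aperiodic $X_{\varphi}$ by recognisability, and $F$ is finite because all its points share the same sequence $\bm{c_{\varphi}}$ (Remarks \ref{rem:factormapuniqueness} and \ref{rem:finitecphi}), so some power of $f$ fixes $x'$ and $x'$ is a quasi-fixed point. This is essentially the pigeonhole argument the paper leaves implicit (an equivalent route would note that $\bm{c_{\varphi}}(x')$ is the ultimately periodic $k$-adic expansion of $z$ and combine Remark \ref{rem:finitecphi} with Proposition \ref{prop:gen_points_are_automatic}\eqref{prop:gen_points_are_automatic3}), and your version is arguably the more complete write-up; your observation that Lemma \ref{lem:finitely_many_representations} cannot be applied directly because $\Z_k$ is not a subshift, and that the two remarks substitute for it, is exactly the right point to flag.
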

\begin{proof}
	By Theorem \ref{thm:MY}, $\kappa' = \kappa\circ\tau$, where $\kappa'\colon X_{\varphi}\to \Z_k$ is the factor map sending the set $\mathrm{P}$ of $\varphi$-periodic points  to $0$.  
	The last claim follows thus from the equality 
\begin{equation}\label{eq:simplearith}
\pi'(x') = \pi'(T^m\varphi^c(x'))=m+ k^c\pi'(x').
\end{equation}
	By Theorem \ref{thm:characteraytomaticsequences}, all automatic sequences in $X$ are of the form $x=\tau(x')$ with $x'\in X_{\varphi}$ the quasi-fixed point of $\varphi$.
	 Thus, the first claim holds, by \eqref{eq:simplearith} and the fact the each rational number $z$ in $\Z_k$ can be written in the form $z=m(1-k^c)^{-1}$ for some $m\in\Z$ and $c\geq 1$ (see Remark \ref{rem:rationals}). 
\end{proof}

We conjecture that Theorem \ref{thm:characteraytomaticsequences} holds  for general substitutive systems (not necessarily of constant length), i.e.\ that the following statement holds.

\begin{conjecture} \label{con:characteraytomaticsequences}  Let $\varphi$ be a substitution. Let $\pi\colon X_{\varphi}\to Y$ be a factor map onto some subshift $Y$. The following hold.
\begin{enumerate}
\item \label{con:characteraytomaticsequences1} A sequence $y\in Y$ is substitutive if and only if $y=\pi(x)$ for some quasi-fixed point $x$ of $\varphi$.
\item \label{con:characteraytomaticsequences2} If $y\in Y$ is substitutive and nonperiodic, then all  points in $\pi^{-1}(y)$ are quasi-fixed points of $\varphi$.
\end{enumerate}
\end{conjecture}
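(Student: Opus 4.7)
My plan is to follow the architecture of the proof of Theorem~\ref{thm:characteraytomaticsequences} and to isolate precisely where constant length is used. First I would reduce to the case $\pi = \tau$ a coding by invoking the Curtis--Hedlund--Lyndon Theorem (Theorem~\ref{thm:CurtisHedlundLyndon}) together with Proposition~\ref{prop:higherblocksubstitutionproperties}, noting that quasi-fixed points transport to quasi-fixed points of $\hat{\varphi}_r$ by Proposition~\ref{prop:gen_points_are_automatic}\eqref{prop:gen_points_are_automatic0}. I would also replace $\varphi$ by a suitable power to assume it is ambi-idempotent, using Lemma~\ref{lem:idempotent} and Proposition~\ref{prop:gen_points_are_automatic}\eqref{prop:gen_points_same_for_power} to preserve the set of quasi-fixed points.

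The forward implication of (i) is essentially free: any quasi-fixed point $x$ of $\varphi$ is substitutive by Proposition~\ref{prop:gen_points_are_automatic}\eqref{prop:gen_points_are_automatic2}, and the substitution-plus-coding representation of $x$ composes with $\tau$ to give a substitutive representation of $\tau(x)$.

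For the hard direction and for (ii), I would fix a nonperiodic substitutive $y \in Y$ and an arbitrary preimage $x \in \tau^{-1}(y)$. By Proposition~\ref{prop:gen_points_are_automatic}\eqref{prop:gen_points_are_automatic3}, it suffices to show that the desubstitution sequence $\bm{R_{\varphi}}(x)$, well-defined by recognisability (\cite{BSTY}), is ultimately periodic. Writing $y = \rho\psi^n(y_0)$ for some substitution $\psi$ and coding $\rho$ that witness the substitutivity of $y$, and writing $x = T^{s_m}\varphi^m(R_{\varphi}^m(x))$ for each $m$, one obtains two families of factorisations of $y$: one induced by the morphisms $\tau\circ\varphi^m$, the other by the morphisms $\rho\circ\psi^n$. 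The key step is to upgrade the Critical-Factorisation-style argument of Lemma~\ref{lem:finitely_many_representations} to non-constant-length morphisms and to deduce that, for $m$ large, the $\varphi$-cuts in $y$ are forced to synchronise with the $\psi$-cuts; this constraint, combined with the growing property of $\varphi$, should leave only finitely many possibilities for the triple $(R_{\varphi}^m(x)_{-1}, R_{\varphi}^m(x)_0, R_{\varphi}^m(x)_1)$, whereupon a pigeonhole à la Remark~\ref{rem:finitecphi} yields ultimate periodicity of $\bm{R_{\varphi}}(x)$. For the purely existential statement in (i), one can additionally invoke Theorem~\ref{thm:subsystems}\eqref{thm:subsystems2} to reduce $y$ lying in a minimal subsystem $\tau(X_{\varphi,b})$ to the minimal case, which, as remarked in the introduction, is known from \cite{HZ-01}.

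The hard part will be the compatibility of the two cut systems. In the constant-length setting this uniformity is automatic, and Fagnot's theorem eliminates incompatible bases $k\neq l$. For general substitutions one loses both the affine $\Z_k$-geometry of cuts and the clean Cobham/Fagnot dichotomy; one would need a Cobham--Durand type dichotomy to conclude that $y$ is eventually periodic when the dominant Perron eigenvalues of $\varphi$ and $\psi$ are multiplicatively independent. A second difficulty is that, by Theorem~\ref{thm:subsystems}\eqref{thm:subsystems3}, $\overline{\mathcal{O}(x)}$ may contain connecting orbits of the form $\mathcal{O}({}^{\omega}\!\varphi(b).\varphi^{\omega}(a))$ lying in no minimal subsystem, so the return-word based argument of \cite{HZ-01} does not apply to points in such orbits. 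Taming these connecting orbits without return words, and establishing a non-constant-length analogue of the Critical Factorisation bound, are in my view the principal obstacles.
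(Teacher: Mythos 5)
The statement you were asked to prove is stated in the paper as Conjecture \ref{con:characteraytomaticsequences}; the paper does not prove it. The only cases the author settles are the constant-length case (Theorem \ref{thm:characteraytomaticsequences}) and, via Remark \ref{rem:holton}, the case of primitive $\varphi$, where part (i) is deduced from Holton--Zamboni together with the higher-block reduction of Section \ref{sec:factors}. So there is no proof in the paper to compare yours against, and your proposal --- candidly --- is not a proof either: it is a strategy outline whose decisive steps are flagged as open. To your credit, you locate the obstacles exactly where the constant-length hypothesis does real work, but the gaps are genuine and should be named as such.

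Concretely: (1) Lemma \ref{lem:finitely_many_representations} uses constant length essentially --- the words $\tau(\varphi^m(a))$ all have length $k^m$, so distinct shifts $s_x$ automatically produce pairwise \emph{disjoint} $W$-factorisations, which is what the Critical Factorisation Theorem needs. For a non-constant-length $\varphi$ two desubstitution representations can share cuts, and no analogue of this disjointness is available; your ``upgrade'' of the lemma is asserted, not argued. (2) The synchronisation of the $\varphi$-cuts with the $\psi$-cuts of an arbitrary substitutive representation of $y$ is precisely the hard content; in the constant-length case it is delivered by Fagnot's theorem (forcing $y$ to be $k$-automatic) plus the $k$-kernel/column-constant machinery of Proposition \ref{prop:automaticiffcphiareultimatelyperiodic}, none of which survives without constant length. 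A Cobham--Durand dichotomy is known only for primitive substitutions, whereas the conjecture concerns general (nonminimal) systems. (3) Your proposed reduction of part (i) to the minimal case via Theorem \ref{thm:subsystems}\eqref{thm:subsystems2} does not cover points of $Y$ lying over the connecting orbits $\mathcal{O}({}^{\omega}\!\varphi(b).\varphi^{\omega}(a))$ of Theorem \ref{thm:subsystems}\eqref{thm:subsystems3}, which belong to no minimal subsystem --- you note this yourself, but it leaves (i) unproved even granting the Holton--Zamboni input. In short, the proposal is a reasonable research plan for attacking the conjecture, not a proof of it.
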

	
\begin{remark}\label{rem:holton}
	By the result of Holton and Zamboni \cite[Thm.\ 6.1 and Lem.\ 6.3]{HZ-01}, Conjecture \ref{con:characteraytomaticsequences} is known in the case when $\varphi$ is primitive and $\pi$ is a coding.   Knowing this one can obtain Conjecture \ref{con:characteraytomaticsequences} for primitive $\varphi$ and general factor map $\pi$ using the results from Section \ref{sec:factors} in the same way they are used in the proof of Theorem \ref{thm:characteraytomaticsequences}.
\end{remark}

\begin{remark}\label{rem:conjugacyproblemsviaquasifixed}
	Let $\varphi$ and $\varphi'$ be any primitive substitutions of the same length $k\geq 2$ with aperiodic systems $X_{\varphi}$ and $X_{\varphi'}$.
	In \cite{Coven2015}, the authors give an algorithm to describe the set of factor maps $\pi\colon X_{\varphi}\to X_{\varphi'}$ (or the automorphism group $\mathrm{Aut}(X_{\varphi})$ in the case $\varphi=\varphi'$) via the detailed study of the "fingerprint" left by the map $\pi$ on the equicontinuous factor $\Z_k$. 
	A crucial step \cite[Thm.\ 3.19 and Prop.\ 3.24]{Coven2015} --- translated to our context via Corollary \ref{cor:automaticcorrespondtorational}---gives a computable bound on the minimal period of the quasi-fixed point of $\varphi'$ to which a $\varphi$-periodic point can be send by a factor map.	
	While in the general case of nonminimal or nonconstant length substitutive systems  we no longer have the underlying structure of $k$-adic integers as an equicontinuous factor, one can still make sense of this approach via quasi-fixed points and their periods.
\end{remark}

The rest of the section is devoted to the proof of Theorem \ref{thm:characteraytomaticsequences}. Let $\varphi\colon \mathcal{A}\rightarrow \mathcal{A}^*$ be a substitution of length $k$. We will make use of the sequences $\Psi_{\bm{i}}$ and families $\mathrm{F}_{\varphi,m}$ defined in Definition \ref{def:kernelmaps}. Here, it will be advantageous to assume that our substitution has the following idempotency property.

\begin{definition}\label{def:pair-constant} Let $\varphi\colon \mathcal{A}\rightarrow \mathcal{A}^*$ be a substitution of constant length. We say that $\varphi$ is \textit{column-constant} if the sets $\F_{\varphi,m}$ are the same for all $m\geq 1$.
\end{definition}

	The name column-constant stems from the fact that, visually, a substitution $\varphi\colon \mathcal{A}\to\mathcal{A}^*$ is column-constant if and only if the sets of columns that appear in the $|\mathcal{A}|\times k^n$ arrays formed by the words $\varphi^n(a)$, $a\in\mathcal{A}$ are the same  for all $n\geq 1$.   
	The following lemma shows that we can always assume that a substitution $\varphi$ is column-constant by passing to some power of $\varphi$.

\begin{lemma}\label{lem:all_pairs} Let $\varphi\colon \mathcal{A}\rightarrow \mathcal{A}^*$ be a substitution of  length $k$. \begin{enumerate}
\item\label{lem:all_pairs1} For any $x,y\in \mathcal{A}^{\Z}$ and $m\geq 1 $, $\varphi^m(x)=\varphi^m(y)$ if and only if $\Theta(x)=\Theta(y)$ for all $\Theta\in \F_{\varphi,m}$.
\item \label{lem:all_pairs2} There exists $n\geq 1$ such that the substitution $\varphi^n$ is column-constant.
\end{enumerate}
\end{lemma}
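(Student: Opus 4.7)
The plan is straightforward and breaks into two routine parts. For~(1), the key observation is that for each $\bm{i}\in[0,k-1]^m$, the map $\Psi_{\bm{i}}$ simply extracts an arithmetic subsequence of $\varphi^m(x)$: indeed, by \eqref{eq:descriptionofmapsPsi}, $\Psi_{\bm{i}}(x) = ((\varphi^m(x))_{k^m n + s_{\bm{i}}})_{n}$. The map $\bm{i}\mapsto s_{\bm{i}}$ is a bijection from $[0,k-1]^m$ onto $[0,k^m-1]$ by uniqueness of base-$k$ digit expansion. Hence the collection $\{\Psi_{\bm{i}}(x) : \bm{i}\in[0,k-1]^m\}$ records, via interleaving, every coordinate of $\varphi^m(x)$. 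The equivalence in~(1) is then immediate, since both conditions translate into coordinate-wise equality of $\varphi^m(x)$ and $\varphi^m(y)$.

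For~(2), I would first verify the identity $\F_{\varphi^n,m}=\F_{\varphi,nm}$ for all $n,m\geq 1$: unpacking the definitions, each single generator $\Psi^{(n)}_i$ of $\F_{\varphi^n,1}$ coincides with $\Psi_{\bm{j}}$ for the length-$n$ base-$k$ expansion $\bm{j}$ of $i\in[0,k^n-1]$, and composing $m$ such maps yields exactly an $nm$-fold composition of generators of $\F_{\varphi,1}$. Therefore $\varphi^n$ is column-constant precisely when the subsequence $(\F_{\varphi,nm})_{m\geq 1}$ is constant.

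Now the sequence $(\F_{\varphi,m})_{m\geq 1}$ takes values in the (finite) power set of the finite set $\F_{\mathcal{A}}$ (see Lemma~\ref{lem:setFisfinite}), and satisfies the recurrence $\F_{\varphi,m+1}=\{\Psi_i\circ\Theta : i\in[0,k-1],\ \Theta\in\F_{\varphi,m}\}$, so each term is a function of the previous. By pigeonhole, $(\F_{\varphi,m})_{m\geq 1}$ is eventually periodic: there exist $p,d\geq 1$ with $\F_{\varphi,m+d}=\F_{\varphi,m}$ for all $m\geq p$. Choosing $n$ to be any multiple of $d$ with $n\geq p$ gives $nm\geq p$ and $nm\equiv n\pmod{d}$ for every $m\geq 1$, hence $\F_{\varphi,nm}=\F_{\varphi,n}$, which proves~(2). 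The only obstacle is the bookkeeping of the bijection $\bm{i}\mapsto s_{\bm{i}}$ and the identity $\F_{\varphi^n,m}=\F_{\varphi,nm}$; once these are established, the argument reduces to a pigeonhole argument on a finite-state dynamical system of power sets.
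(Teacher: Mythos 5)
Your proof is correct and follows essentially the same route as the paper: part (1) is the same observation that $\bm{i}\mapsto s_{\bm{i}}$ enumerates all residues mod $k^m$, so the maps in $\F_{\varphi,m}$ jointly record every coordinate of $\varphi^m(\cdot)$, and part (2) is the same pigeonhole argument on the finite-state map $P\mapsto\{\Psi_i\circ f\}$ acting on subsets of $\F_{\mathcal{A}}$, with your ``eventual periodicity of the orbit'' phrasing replacing the paper's appeal to an idempotent power $G^n$ of that map. The bookkeeping identities you flag ($\F_{\varphi^n,m}=\F_{\varphi,nm}$ and the bijectivity of $\bm{i}\mapsto s_{\bm{i}}$) are exactly the facts the paper records around \eqref{eq:descriptionofmapsPsi}, so there is no gap.
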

\begin{proof} 
	The first claim follows easily from the fact that $\mathrm{F}_{\varphi,m}=\mathrm{F}_{\varphi^m,1}$ for any $m\geq 1$.
	To see the second claim, let $\mathcal{P}$ be the set of all nonempty subsets of $\F_{\mathcal{A}}$. Define the function $G\colon \mathcal{P}\to \mathcal{P}$ that sends a set $P\in \mathcal{P}$ to the set \[G(P)=\{\Psi_i\circ f\mid i\in [0,k-1],\ f\in P\}.\]  Note that $G^m(\F_{\varphi,1})=\F_{\varphi,m}$ for all $m\geq 1$. Since $G$ is defined on a finite set,  there exists $n\geq 1$ such that the function $G^n$ is idempotent (e.g.\ by \cite[Lem.\ 1.7]{BKK}). In particular, $G^{n}(\F_{\varphi,1})=G^{mn}(\F_{\varphi,1})$ for all $m\geq 1$, and hence $\varphi^n$ is column-constant. 
\end{proof}

Let $\varphi$ be a column-constant substitution of constant length and let $\tau$ be a coding. By \ref{lem:all_pairs}\eqref{lem:all_pairs1},  $\tau(\varphi(x))\neq\tau(\varphi(y))$ implies that $\tau(\varphi^m(x))\neq\tau(\varphi^m(y))$ for all $m\geq 1$.
	We cannot, in general, hope to show that $\tau(x)\neq\tau(y)$ implies $\tau(\varphi^{nm}(x))\neq \tau(\varphi^{nm}(y))$ for all $m\geq 0$ for some $n\geq 1$. Indeed, this may fail even when the coding $\tau$ is trivial:  consider any substitution $\varphi$, for which there exist two distinct letters $a$, $b$ such that $\varphi(a)=\varphi(b)$, i.e.\ a substitution $\varphi$, which is not \textit{injective}. If $X_{\varphi}$ is aperiodic then a standard procedure (called \textit{injectivization}) allows one to replace an automatic system $X_{\varphi}$ by a conjugate automatic system $X_{\widetilde{\varphi}}$ given by an injective substitution $\widetilde{\varphi}$ \cite[Prop.\ 2.3]{BDM-04}. Furthermore, if  $X_{\varphi}$ is minimal then  one can find (in a process  called \textit{minimization}) a substitution $\varphi'$ of constant length $k$ such that $\tau(X_{\varphi})$ and $X_{\varphi'}$ are conjugate (see Theorem \ref{thm:MY}).
	 However injectivization does not work well when the system is not aperiodic:  If $X_{\varphi}$ is not aperiodic, then the injectivized systems $X_{\widetilde{\varphi}}$ need not be conjugate to $X_{\varphi}$.
	 If $X_{\varphi}$ is not minimal, then the minimization does not work in general either (see Example \ref{ex:automaticsystemnotsubstitutive}).
	  Some of the additional technicalities in the proofs below are due to the fact that we work with general (not purely) automatic systems without excluding periodic points.

	Let $\varphi$ be a substitution of constant length $k$ and let $\tau$ be a coding (or, a factor map).
	The fact that for every quasi-fixed point $x$ of $\varphi$, $\tau(x)$  is $k$-automatic follows easily from  Proposition \ref{prop:gen_points_are_automatic}\eqref{prop:gen_points_are_automatic2}. Thus, the crucial part in the proof of Theorem \ref{thm:characteraytomaticsequences} is to show that all $k$-automatic points  in $\tau(X_{\varphi})$ are of the required form. 

\begin{lemma}\label{lem:periodicpoints}
Let $\varphi$ be an ambi-idempotent substitution and let $\tau$ be a coding. Let $y\in \tau(X_{\varphi})$ be nonperiodic and let $x\in X_{\varphi}$ be any point such that $\tau(x)=y$. Then all sequences $\tau(x^i)$ are nonperiodic, where $\bm{R_{\varphi}}(x)=(x^i)_i$.
\end{lemma}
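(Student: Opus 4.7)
The plan is to proceed by contradiction. Assuming $\tau(x^i)$ is periodic for some $i$, take the smallest such $i$; since $\tau(x^0)=y$ is nonperiodic, we must have $i\geq 1$ and $\tau(x^{i-1})$ is nonperiodic. The goal will be to show that $\tau(x^{i-1})$ is nevertheless periodic, giving the desired contradiction.

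The key step is to exploit the part of Theorem \ref{thm:subsystems}\eqref{thm:subsystems3} stating that every subsystem of $X_\varphi$ (for ambi-idempotent $\varphi$) is closed under $\varphi$. Let $p\geq 1$ be a period of $\tau(x^i)$ and set
\[S_p = \{z\in X_\varphi : \tau(T^p z) = \tau(z)\}.\]
I would then verify the three routine observations: $S_p$ is closed in $X_\varphi$ (it is an intersection of countably many closed conditions, one per coordinate), shift-invariant by construction, and nonempty because it contains $x^i$. Hence $S_p$ is a subsystem of $X_\varphi$, and Theorem \ref{thm:subsystems}\eqref{thm:subsystems3} gives $\varphi(S_p)\subset S_p$. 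In particular $\varphi(x^i)\in S_p$, so $\tau\varphi(x^i)$ has period $p$.

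To finish, recall that by the definition of $R_\varphi$ we have $x^{i-1} = T^{c_{i-1}}\varphi(x^i)$, so
\[\tau(x^{i-1}) = T^{c_{i-1}}\,\tau\varphi(x^i)\]
is a shift of a periodic sequence, hence periodic, contradicting the minimality of $i$. I do not expect any serious obstacle: the only real content is the observation that the set of $z\in X_\varphi$ whose $\tau$-image admits a fixed period $p$ is automatically a subshift of $X_\varphi$, after which ambi-idempotency closes the loop through Theorem \ref{thm:subsystems}. Note that this argument does not require $\varphi$ to be of constant length, so the lemma will hold as stated for arbitrary ambi-idempotent substitutions.
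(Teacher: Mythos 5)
Your proof is correct and follows essentially the same route as the paper: both arguments hinge on the fact from Theorem \ref{thm:subsystems}\eqref{thm:subsystems3} that every subsystem of $X_{\varphi}$ is closed under $\varphi$ when $\varphi$ is ambi-idempotent, and use it to propagate periodicity of $\tau(x^i)$ back to $y=\tau(x^0)$. The only cosmetic difference is that the paper applies the closure property to the orbit closure of $x^i$ and passes directly to $x^0=T^{s}\varphi^{i}(x^i)$, whereas you apply it to the subshift $S_p$ and descend one index at a time.
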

\begin{proof}
Let $X^i$, $i\geq 0$ be the orbit closure of $x^i$; note that, by compactness, $\tau(X^i)$ is the orbit closure of $\tau(x^i)$ for all $i\geq 0$.	 
	Suppose that $\tau(x^i)$ (and, thus, $\tau(X^i)$) is periodic for some $i$.  
	Since $x^0$ is equal to $\varphi^{i}(x^i)$ modulo the shift and, by Proposition \ref{thm:subsystems}\eqref{thm:subsystems3}, $\varphi^i(x^i)\in X^i$, we have   $X^0\subset X^i$.
	Hence, $\tau(X^0)\subset \tau(X^i)$ is periodic and, thus, $y$ is periodic as an element of $\tau(X^0)$, which is a contradiction.
\end{proof}

It is necessary to impose some assumptions (such as ambi-idempotency) on a substitution $\varphi$ in  Lemma \ref{lem:periodicpoints};  consider e.g.\ the following example.

\begin{example}\label{ex:periodicpoints} Let  $\varphi\colon \mathcal{A}\rightarrow \mathcal{A}^*$ be the substitution given by $\mathcal{A}=\{0,1,2,3,4\}$ and \[\varphi(0)=0130,\quad \varphi(1)=3443, \quad\varphi(2)=4334,\quad \varphi(3)=1221, \quad\varphi(4)=2112.\] Let $\mathcal{B}=\{0,1,2,3\}$ and let $\tau\colon \mathcal{A}\to \mathcal{B}$ be the coding identifying letters $3$ and $4$, i.e. \[\tau(0)=0,\quad\tau(1)=1,\quad \tau(2)=2,\quad \tau(3)=\tau(4)=3.\] Let $x'={}^{\omega}\!(\varphi^2)(1).(\varphi^2)^{\omega}(2)$ and $x''={}^{\omega}\!(\varphi^2)(3).(\varphi^2)^{\omega}(4)$ be the Thue--Morse sequences over the alphabets $\{1,2\}$ and $\{3,4\}$, respectively. Note that $x'$ and $x''$ lie in $X_{\varphi}$. Then $R_{\varphi}(x')=x''$,  $y=\tau(x')$ is nonperiodic (since $\tau(x')=x'$), and $\tau(x'')$ is periodic (since it is the constant sequence of $3$'s).
\end{example}

\begin{proposition}\label{prop:automaticiffcphiareultimatelyperiodic} Let $k\geq 2$. Let $\varphi$ be a substitution of length $k$ and let $\tau$ be a coding. Assume $\varphi$ is ambi-idempotent and column-constant. Let $y\in \tau(X_{\varphi})$ be nonperiodic and let $x\in X_{\varphi}$ be any point such that $\tau(x)=y$.   The following conditions are equivalent: \begin{enumerate}
\item\label{prop:main1} $y$ is $k$-automatic,
\item\label{prop:main2} $x$ is a quasi-fixed point of $\varphi$.
\end{enumerate}
\end{proposition}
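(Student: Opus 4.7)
The implication $(2) \Rightarrow (1)$ will follow directly: by Proposition~\ref{prop:gen_points_are_automatic}\eqref{prop:gen_points_are_automatic2} every quasi-fixed point of $\varphi$ is $k$-automatic, and $k$-automaticity is preserved under codings. I focus on the converse $(1) \Rightarrow (2)$. Since $y = \tau(x)$ is nonperiodic, $x$ itself must be nonperiodic, so by Proposition~\ref{prop:gen_points_are_automatic}\eqref{prop:gen_points_are_automatic3} it suffices to show that $\bm{R_{\varphi}}(x) = (x^i)_{i \geq 0}$ is ultimately periodic; equivalently, that $x^a = x^b$ for some $a < b$.

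The first step leverages the finiteness of $\mathrm{K}_k(y)$. Writing $x = T^{s_m}\varphi^m(x^m)$ with $s_m = \sum_{i<m} c_i k^i$, Lemma~\ref{lem:kernel} combined with the identity $\mathrm{K}_k(y) = \tau(\mathrm{K}_k(x))$ shows that every element of $\mathrm{K}_k(y)$ has the form $\tau\Psi_{\mathbf{i}}(x^m)$ or $T\tau\Psi_{\mathbf{i}}(x^m)$ for some $m\geq 0$ and $\mathbf{i}\in [0,k-1]^m$. Column-constancy then yields $\F_{\varphi,m} = \F_{\varphi,1}$ for every $m \geq 1$, so the collection $Q = \{\tau\Psi(x^m) : m\geq 1,\ \Psi\in \F_{\varphi,1}\}$ sits inside the finite set $\mathrm{K}_k(y)\cup T^{-1}(\mathrm{K}_k(y))$ and is therefore finite.

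Pigeonholing the map $\Phi\colon \mathbf{N}_{\geq 1}\to Q^{\F_{\varphi,1}}$ defined by $\Phi(m) = (\tau\Psi(x^m))_{\Psi\in \F_{\varphi,1}}$ produces an infinite set $\{M_j\}_{j\geq 1}$ on which $\Phi$ is constant. For each $j<l$, applying $\Phi(M_j) = \Phi(M_l)$ to $\Psi_0,\ldots,\Psi_{k-1}\in \F_{\varphi,1}$ and interleaving across positions gives $\tau\varphi(x^{M_j}) = \tau\varphi(x^{M_l})$. Since $\tau(x^{M_j - 1}) = T^{c_{M_j - 1}}\tau\varphi(x^{M_j})$, all these images are shifts by amounts in $[0,k-1]$ of one common sequence, so a further pigeonhole furnishes an infinite subset $S \subset \{M_j\}$ on which $\tau(x^{M-1})$ takes a constant value $W$; by Lemma~\ref{lem:periodicpoints}, $W$ is nonperiodic.

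The real pinch will be at the last step, where the noninjectivity of $\tau$ must be overcome. Here Lemma~\ref{lem:finitely_many_representations} provides exactly what is needed: the fibre $\tau^{-1}(W)$ has uniformly bounded size, and since the infinitely many points $x^{M-1}$, $M\in S$, all lie in this finite fibre, a final pigeonhole yields $x^{M - 1} = x^{M' - 1}$ for some $M < M'$ in $S$. Setting $a = M - 1$ and $b = M' - 1$, iterating $R_{\varphi}$ propagates this coincidence forward to $x^{a+i} = x^{b+i}$ for all $i\geq 0$, so $\bm{R_{\varphi}}(x)$ is ultimately periodic and Proposition~\ref{prop:gen_points_are_automatic}\eqref{prop:gen_points_are_automatic3} concludes that $x$ is a quasi-fixed point of $\varphi$, as desired.
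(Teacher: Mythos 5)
Your proposal is correct and follows essentially the same route as the paper's proof: bound the set $\{\tau\Psi(x^m)\}$ inside $\mathrm{K}_k(y)\cup T^{-1}(\mathrm{K}_k(y))$ via Lemma \ref{lem:kernel} and column-constancy, pigeonhole to force $\tau(x^{r-1})$ to take a common nonperiodic value, and then use the fibre bound of Lemma \ref{lem:finitely_many_representations} to find $p<q$ with $x^p=x^q$. The only (immaterial) differences are that the paper pigeonholes directly on a set of $K+1$ indices rather than passing through an infinite set, and concludes from $x^p\sim\varphi^{q-p}(x^p)$ rather than citing the ultimate periodicity criterion of Proposition \ref{prop:gen_points_are_automatic}\eqref{prop:gen_points_are_automatic3}.
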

\begin{proof}
	If $x$ is a quasi-fixed point of $\varphi$, it is $k$-automatic by Proposition \ref{prop:gen_points_are_automatic}\eqref{prop:gen_points_are_automatic2}. 
	Hence, $y$ is $k$-automatic as a coding of a $k$-automatic sequence.
	 This shows that \eqref{prop:main2} implies \eqref{prop:main1}. 

	To show the other implication assume that $y$ is $k$-automatic and nonperiodic. 
	Consider any  $x\in X_{\varphi}$  such that $\tau(x)=y$.
	 Let  $\bm{R_{\varphi}}(x)=(x^i)_{i\geq 0}$ and $\bm{c_{\varphi}}(x)=(c_i)_{i\geq 0}$ be the desubstitution sequences of $x$ (over $X_{\varphi}$ and $[0,k-1]$, respectively) such that 
\begin{equation}\label{representation_main}
x^0=x\quad \text{and}\quad x^i=T^{c_{i}}(\varphi(x^{i+1})) \quad \text{for}\quad i\geq 0.
\end{equation} 
By Lemma \ref{lem:periodicpoints}, all $\tau(x^i)$, $i\geq 0$ are nonperiodic. 	 

	 By representation \eqref{representation_main} and Lemma \ref{lem:kernel}, for all $m\geq 1$ and $\bm{i}\in [0,k-1]^m$, the sequence $\Psi_{\bm{i}}(x^m)$ lies either in $K_{k}(x)$ or $T^{-1}(K_k(x))$ (depending on whether $\sum_{l=0}^{m-1} i_{m-l-1}k^l\geq \sum_{l=0}^{m-1} c_lk^l$ or not).  Let $\F=\F_{\varphi,1}$. Since $\varphi$ is column-constant, \[\F_{\varphi,m}=\F \quad \text{for all}\quad m\geq 1;\]
	  in particular,  for any $\Theta\in \F$ and $m\geq 1$, there exists some  $\bm{i}\in [0,k-1]^m$ such that $\Theta=\Psi_{\bm{i}}$. 
	  Hence for any $m\geq 1$ we have 
\[\{\tau(\Theta(x^m))\mid \Theta\in \F\}\subset \mathrm{K}_k(y)\cup T^{-1}(\mathrm{K}_k(y)),\]
where $y=\tau(x)$.

	By Lemma \ref{lem:finitely_many_representations}, there exists $K\geq 1$ such that
	\begin{equation}\label{eq:K}
	\abs{\tau^{-1}(y')}\leq K\quad\text{for any nonperiodic}\quad y'\in Y.
	\end{equation}
	Since $y$ is $k$-automatic, its $k$-kernel $\mathrm{K}_{k}(y)$ is finite and thus the set $ \mathrm{K}_k(y)\cup T^{-1}(\mathrm{K}_k(y))$ is finite. The set $\F$ is finite as well (e.g.\ by Lemma \ref{lem:setFisfinite}). 
	 By pigeonhole principle we can thus find a set $\R\subset \N$ of $K+1$ distinct strictly positive integers such that \begin{enumerate}
\item\label{prop1} for each $\Theta\in \F$, the sequences $\tau(\Theta(x^{r}))$ coincide for all $r\in\R$;
\item \label{prop3} the integers $c_{r-1}$ are the same for all $r\in\R$. 
\end{enumerate}  Since $\F=\F_{\varphi,1}$, property \eqref{prop1} implies that the sequences
\[\tau(\varphi(x^{r}))\] are the same for all $r\in\R$. 
	Since \[\tau(x^{r-1})= T^{c_{r-1}}(\tau(\varphi(x^{r}))),\quad r\in\R\]
property \eqref{prop3} implies that the value $\tau(x^{r-1})$ is independent of $r\in\R$. 
	Call this value $y'$, this is a sequence in $Y$. Since $y'$ is nonperiodic and $\R$ has cardinality $K+1$,  by \eqref{eq:K}, we can find  two integers $p<q$ among  $\{r-1\mid  r\in\R\}$ such that the sequences $x^p$ and $x^q$ are equal.
	Since $x^p$ is equal, modulo the shift, to $\varphi^{q-p}(x^q) = \varphi^{q-p}(x^q)$, $x^p$ is a quasi-fixed point of $\varphi$. 
	Thus $x$ is a quasi-fixed point of $\varphi$, since quasi-fixed points are preserved under $\varphi$ by Proposition \ref{prop:gen_points_are_automatic}\eqref{prop:gen_points_are_automatic1}.
\end{proof}

\begin{proof}[Proof of Theorem \ref{thm:characteraytomaticsequences}]
	We will first show the claim under the additional assumptions that $\varphi$ is ambi-idempotent and column-constant and that the factor map $\pi$ is a coding; we write $\pi=\tau$ in this case. 

Let $y\in Y$. If $y$ is nonperiodic, then all the claims follow directly from Proposition \ref{prop:automaticiffcphiareultimatelyperiodic}.
	 Assume thus that $y$ is periodic. Note that in this case $y$ is $k$-automatic, and we only need to show that there exists some quasi-fixed point $x\in X_{\varphi}$ such that $\tau(x)=y$. Consider the periodic subsystem $Y'\subset Y$ consisting of the orbit of $y$; note that $Y'$ is minimal.
	  By Proposition \ref{thm:subsystems}\eqref{thm:subsystems2}, $Y'=\tau(X_{\varphi'})$ for some primitive substitution 
\[ \varphi' = \varphi|_{\A_b}\colon \A_b\to\A_b^*\]
defined on a subalphabet $\A_b\subset\A$. Since $\varphi$ is ambi-idempotent, $\varphi'$ is also ambi-idempotent, and there exists some fixed point $x'$ of $\varphi'$ such that $X'$ is the orbit closure of $x'$. Since $Y'$ is finite, $y=\tau(x)$ for some $x\in X_{\varphi'}$ that is a shift of $x'$. Since $x'$ is a quasi-fixed point and quasi-fixed points are preserved under shifts this proves the claim.

	We will now deduce the claim when $\pi\colon X_{\varphi}\to Y$ is a general factor map. By the Curtis--Hedlund--Lyndon Theorem (Theorem \ref{thm:CurtisHedlundLyndon}) and Proposition \ref{prop:higherblocksubstitutionproperties}\eqref{prop:higherblocksubstitutionproperties4}, there exists  an odd integer $r=2n+1$, $n\geq 0$ and a coding 
\[\pi_r\colon \widehat{\mathcal{A}_{r}}\to\mathcal{B}\] such that the following diagram
\begin{equation}\label{eq:diagram}
  \begin{tikzcd}
    X_{\varphi} \arrow{r}{\iota_r} \arrow{d}{\pi} & X_{\hat{\varphi_r}} \arrow{d}{\pi_r}   \\
   Y\arrow[leftarrow]{r}{T^{-n}} &  Y
  \end{tikzcd}
\end{equation}
commutes, where $\widehat{\mathcal{A}_{r}}=\mathcal{L}_r(X_{\varphi})$, $X_{\hat{\varphi_r}}$ is the system generated by the $r$th block substitution induced by $\varphi$, and $\iota_r$ is the $r$th higher block presentation map (see Section \ref{sec:factors}).
	 
	 By Proposition \ref{prop:gen_points_are_automatic}\eqref{prop:gen_points_are_automatic0}, $x\in X_{\varphi}$ is a quasi-fixed point of $\varphi$ if and only if $\iota_r(x)$ is a quasi-fixed point of $\hat{\varphi_r}$. 
	 Applying the above case with a trivial coding gives that	$x\in X_{\varphi}$ (resp., $x\in X_{\hat{\varphi_r}}$) is $k$-automatic if and only if it is a quasi-fixed point of $\varphi$ (resp., $\hat{\varphi_r}$).
	Furthermore, by the above case and the fact that shifts preserve $k$-automaticity, the claim holds for the factor map 
	\[T^{-n}\circ\pi_r\colon X_{\hat{\varphi_r}} \to Y\] given by the composition of the coding $\pi_r$ with the shift $T^{-n}$.
	 			This together with the commutativity of the diagram \eqref{eq:diagram} implies all the claims for the factor map $\pi\colon X_{\varphi}\to Y$.
	 			
	 	Now we will deduce the claim for general $\varphi$. 
	 			By Lemma \ref{lem:all_pairs} and Lemma \ref{lem:idempotent}, there is $n$  such that $\varphi^n$ is  column-constant and  ambi-idempotent.
	 			 Furthermore, by Proposition \ref{prop:gen_points_are_automatic}\eqref{prop:gen_points_same_for_power}, the sets of quasi-fixed points of $\varphi$ and quasi-fixed points of $\varphi^n$ coincide for all $n\geq 1$.
	 	This is enough to deduce the claim under the assumption that $X_{\varphi}=X_{\varphi^n}$; this  holds  under very mild assumptions (see Remark \ref{rem:whenpowersystemsagree}).
	 In the general case, one can argue as follows.
	 
	  Let $A'\subset A$ be the set of letters which appear in $\varphi(a)$ for some $a\in\A$ and let 
	  \[\varphi'=\varphi|_{A'}\colon \A'\to (\A')^*
	  \]
	  be the restriction of $\varphi$ to $\A'$. By \cite[Lem.\ 5.3]{BPR23}, $X_{\varphi'}=X_{(\varphi')^{n}}$ for any $n\geq 1$ and the claim holds for $X_{\varphi'}$.
	  We claim that all $x\in X_{\varphi}\setminus X_{\varphi'}$ are shifts of $\varphi$-periodic point and, thus, are automatic.
	  	This will clearly finish the proof.
	  	Repeating the reasoning in the proof of \cite[Thm.\ 2.13]{BKK}  one shows that either $x$ is a shift of ${}^{\omega}\!(\varphi^n)(b).(\varphi^n)^{\omega}(a)$ for $a$ right prolongable  and $b$ left prolongable w.r.t.\ $\varphi^n$ (where $n$ is such that $\varphi^n$ is ambi-idempotent) or $x\in X_{\varphi,c}$ for some $c\in \mathcal{L}(X_{\varphi})$.
	  	Note that the second case implies that $c\in \A'$ and, thus, $x\in X_{\varphi'}$. Hence, the fist case has to hold and $x$ is a shift of a $\varphi$-periodic point.
\end{proof}

\section{Appendix: One-sided substitutive systems}\label{appendix} 

In this appendix we study quasi-fixed points of substitutions in the one-sided context and  obtain a characterisation of automatic sequences in one-sided automatic systems. While the main results and the general structure of the proof is the same as in the two-sided setting, the one-sided case presents some technical difficulties. This is mainly due to the fact that recognisability may fail in the one-sided setting and  (transitive) subsystems of one-sided substitutive systems have more complicated structure; in particular, a subsystem of a one-sided purely automatic system $X_{\varphi}$ may not be closed under $\varphi^n$ for any $n\geq 1$. 

Here we deal exclusively with one-sided systems, and contrary to the rest of the paper all notation refers to the one-sided case.  This change should not lead to any confusion.  We let $\mathcal{O}^{+}(x)=\{T^n(x)\mid n\in \N\}$ denote the forward orbit of a point $x$ in a system $(X,T)$. 
	   A system $X$ is  \textit{transitive} if there exists $x\in X$ such that $\overline{\mathcal{O}^{+}(x)}=X$. 
	    For a substitution $\varphi\colon\mathcal{A}\to\mathcal{A}^*$, we let 
\[X_{\varphi}=\{x\in\mathcal{A}^{\N}\mid \textrm{ every factor of } x \textrm{ appears in } \varphi^n(a) \textrm{ for some } a\in\mathcal{A}, n\geq 0\}\]
denote the one-sided system generated by $\varphi$ and we use the notation
 \[X_{\varphi,b}=\{x\in\mathcal{A}^{\mathbf{N}}\mid \textrm{ every factor of } x \textrm{ appears in } \varphi^n(b) \textrm{ for some }  n\geq 0\},\]
 where $b\in\A$.
	Remark \ref{rem:languageofsubsystems} and \ref{rem:whenpowersystemsagree} hold without any changes with the one-sided definition of $X_{\varphi}$; in particular $X_{\varphi}=X_{\varphi^{n}}$ whenever $X_{\varphi}$ is transitive (by \cite[Lem.\ 1.5]{BKK}) or whenever all letter from $\A$ appear in $\varphi(\A)$ (by the fact that the proof in \cite[lem.\ 5.3]{BPR23} works also for one-sided systems).

	 By a slight abuse of notation, we write $T^c(x)=y$ for some $c<0$ and one-sided sequences $x$ and $y$, if $x=T^{-c}(y)$. The main definition and result are the same as in the two-sided setting. 
\begin{definition}\label{def:generalised_fixed_point_one} Let $\varphi\colon \mathcal{A}\rightarrow \mathcal{A}^*$ be a substitution. A sequence $z\in \mathcal{A}^{\mathbf{N}}$ is called a quasi-fixed point  of $\varphi$ if there exist $m>0$ and $c\in \Z$ such that
\[T^c(\varphi^m(z))=z.\] 
\end{definition}

\begin{theorem} \label{thm:onecharacteraytomaticsequences} Let $k\geq 2$. Let $\varphi$ be a substitution of length $k$. Let $\pi\colon X_{\varphi}\to Y$ be a factor map onto some subshift $Y$. The following hold.\begin{enumerate}
\item \label{thm:onecharacteraytomaticsequences1} A sequence $y\in Y$ is $k$-automatic if and only if $y=\pi(x)$ for some quasi-fixed point $x$ of $\varphi$.
\item \label{thm:onecharacteraytomaticsequences2} If $y\in Y$ is $k$-automatic and nonperiodic, then all  points in $\pi^{-1}(y)\subset X$ are quasi-fixed points of $\varphi$.
\end{enumerate}
\end{theorem}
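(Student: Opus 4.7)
The approach is to mirror closely the proof of Theorem~\ref{thm:characteraytomaticsequences} given in Section~\ref{sec:automatic sequences in systems}, carrying out the same three-step reduction and then contending with the two technical obstacles flagged at the start of this appendix: the failure of recognisability for one-sided systems and the fact that subsystems of a one-sided $X_{\varphi}$ need not be closed under any $\varphi^n$.

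First I would reduce $\pi$ to a coding via the one-sided Curtis--Hedlund--Lyndon theorem (any factor map between one-sided subshifts has the form $\pi(x)_i=\pi_r(x_{[i,i+r)})$ for some block code $\pi_r$) combined with the fact that $\iota_r(X_{\varphi})=X_{\hat{\varphi}_r}$ holds one-sidedly as well, since the proof of Proposition~\ref{prop:higherblocksubstitutionproperties} is purely combinatorial. Next I would pass to a suitable power so that $\varphi$ is ambi-idempotent and column-constant (Lemmas~\ref{lem:idempotent} and~\ref{lem:all_pairs}), handling the possible failure $X_{\varphi}\ne X_{\varphi^n}$ exactly as in the proof of Theorem~\ref{thm:characteraytomaticsequences}: restrict to the subalphabet $\mathcal{A}'$ of letters appearing in $\varphi(\mathcal{A})$, use $X_{\varphi|_{\mathcal{A}'}}=X_{(\varphi|_{\mathcal{A}'})^n}$ (whose proof applies verbatim one-sidedly, as noted at the start of the appendix), and observe that the remaining points in $X_{\varphi}\setminus X_{\varphi|_{\mathcal{A}'}}$ are shifts of $\varphi$-periodic points via the one-sided analog of Theorem~\ref{thm:subsystems}, hence automatically quasi-fixed.

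The crux is the one-sided analog of Proposition~\ref{prop:automaticiffcphiareultimatelyperiodic}: for $y\in\tau(X_{\varphi})$ nonperiodic and $k$-automatic, every preimage $x\in\tau^{-1}(y)$ is a quasi-fixed point of $\varphi$. Choose any desubstitution sequence $(x^i)_{i\geq 0}$ satisfying $x^0=x$ and $x^i=T^{c_i}\varphi(x^{i+1})$ with $0\leq c_i<k$; such a sequence always exists, though in the one-sided case it is not in general unique. Column-constancy together with Lemma~\ref{lem:kernel} still gives the inclusion of $\{\tau(\Theta(x^m)):\Theta\in\mathrm{F},\ m\geq 1\}$ inside $\mathrm{K}_k(y)\cup T^{-1}(\mathrm{K}_k(y))$, and the right-hand side is finite since $y$ is $k$-automatic. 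Assuming one-sided analogs of Lemma~\ref{lem:periodicpoints} and Lemma~\ref{lem:finitely_many_representations}, a pigeonhole argument then produces indices $p<q$ with $x^p=x^q$, whence $x^p$ and therefore $x$, by closure under shifts and $\varphi$ (Proposition~\ref{prop:gen_points_are_automatic}), is a quasi-fixed point of $\varphi$.

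The main obstacle is establishing these one-sided analogs. For the analog of Lemma~\ref{lem:finitely_many_representations}, the Critical Factorisation Theorem applies equally well to nonperiodic one-sided sequences, but the argument must accommodate non-unique desubstitution: I would fix one desubstitution choice per preimage and bound the number of distinct such choices by counting disjoint $W$-factorisations of the nonperiodic $y$ itself, together with a bounded correction absorbing the finitely many ways the first few symbols can be parsed. For the analog of Lemma~\ref{lem:periodicpoints}, the difficulty is that $X^i=\overline{\mathcal{O}^+(x^i)}$ need not be $\varphi$-invariant; one can circumvent this by observing that some shift of $\varphi^i(x^i)$ still lies in $X^i$, which suffices to transfer periodicity from $\tau(x^i)$ down to $\tau(x)$ and contradict the nonperiodicity of $y$. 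With these two ingredients in place, the remainder of the argument transfers verbatim from the two-sided proof.
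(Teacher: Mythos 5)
Your overall architecture (reduce $\pi$ to a coding via one-sided Curtis--Hedlund--Lyndon, pass to a power, handle $X_{\varphi}\neq X_{\varphi^n}$ by restricting to $\A'$, then run the pigeonhole argument of Proposition~\ref{prop:automaticiffcphiareultimatelyperiodic}) matches the paper's proof. But there is a genuine gap in your treatment of the one-sided analogue of Lemma~\ref{lem:periodicpoints}, and it is precisely the point where the one-sided case differs substantively from the two-sided one. The statement you aim to prove --- that nonperiodicity of $y=\tau(x)$ forces all $\tau(x^i)$ to be nonperiodic --- is \emph{false} for one-sided systems: Example~\ref{ex:periodicpoints_one} exhibits an idempotent $\varphi$ and a coding $\tau$ with $\tau(x)=10^{\omega}$ nonperiodic while $\tau(x^1)=0^{\omega}$ is periodic. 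Your proposed repair (``some shift of $\varphi^i(x^i)$ lies in $X^i$, which transfers periodicity down to $\tau(x)$'') only shows that $T^m(x)$ lands in the periodic system $\tau^{-1}$-over $\tau(X^i)$ for some $m\geq 0$, hence that $\tau(x)$ is \emph{ultimately} periodic --- which does not contradict nonperiodicity of $y$, exactly as $10^{\omega}$ demonstrates. So the pigeonhole step cannot be fed the hypothesis it needs (nonperiodicity of $y'=\tau(x^{r-1})$, required to invoke the bounded-fibre lemma).

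The correct statement is a dichotomy (Lemma~\ref{lem:periodicpoints_one}): either all $\tau(x^i)$ are nonperiodic, or $x$ is \emph{already} a quasi-fixed point of $\varphi$ --- and in the second case one concludes directly rather than by contradiction. Proving this dichotomy requires a genuinely new ingredient absent from the two-sided proof: the structure theorem for orbit closures in one-sided substitutive systems (Theorem~\ref{thm:subsystems_one}\eqref{thm:subsystems_one3}), which says that the orbit closure of any $x^i$ is either some $X_{\varphi,b}$ (in which case all the $X^i$ coincide, are $\varphi$-invariant, and the two-sided argument goes through) or $x^i$ is itself a quasi-fixed point. Note also that this structure theorem is stated for \emph{idempotent} substitutions in the sense of \cite[Def.\ 1.6]{BKK}, not ambi-idempotent ones as in your reduction; the two notions differ, and the one-sided argument needs the former. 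A smaller omission, worth flagging: in the one-sided pigeonhole step one must additionally match the initial letters of the $x^r$ and apply the maps $\Theta$ to $T(x^r)$ rather than $x^r$, since the kernel description at position $0$ behaves differently one-sidedly; this is routine but not literally ``verbatim.''
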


Theorem \ref{thm:subsystems_one} below gathers the results about  subsystems of one-sided substitutive systems that we will need. To state them, we need the notion of an \emph{idempotent} substitution that appeared in \cite[Def.\ 1.6]{BKK}. We will not use this notion \emph{per se} here; the important thing for us is that for any substitution $\varphi$ some of its power $\varphi^n$, $n\geq 1$ is idempotent \cite[Lem.\ 1.8]{BKK}. For this reason we will not state the (quite lengthy) definition here.

\begin{theorem}\label{thm:subsystems_one}
Let $\varphi\colon\mathcal{A}\rightarrow\mathcal{A}^*$ be an idempotent substitution, and let $\tau\colon\mathcal{A}\to\mathcal{B}$ be a coding. 
\begin{enumerate}
\item\label{thm:subsystems_one3}\cite[Cor.\ 2.7 and Prop.\ 2.6]{BKK}  Let $y\in X_{\varphi}$ and let $Y$ be the orbit closure of $y$. Then one of the following holds: \begin{enumerate}[(a)]
\item $Y=X_{\varphi,b}$ for some $b\in\mathcal{A}$, or
\item $y$ is a quasi-fixed point of $\varphi$.
\end{enumerate} 
\item\label{thm:subsystems_one2}\cite[Prop.\ 2.2 and Lem.\ 1.1]{BKK}   All minimal subsystems of $\tau(X_{\varphi})$ are given by $\tau(X_b)$ for some $b\in\mathcal{A}$, where \[\varphi'=\varphi|_{\mathcal{A}_{b}}\colon\mathcal{A}_{b}\to(\mathcal{A}_{b})^*\] is a primitive substitution.
\end{enumerate}
\end{theorem}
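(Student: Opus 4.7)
The plan is to prove both parts via iterative desubstitution inside $X_\varphi$, following the strategy of \cite{BKK}. For part (\ref{thm:subsystems_one3}), given $y \in X_\varphi$, I would produce sequences $(y_n)_{n \ge 0} \subseteq X_\varphi$ and offsets $(c_n)_{n \ge 1}$ with $y_0 = y$, $y_{n-1} = T^{c_n}\varphi(y_n)$, and $0 \le c_n < |\varphi((y_n)_0)|$. Such a desubstitution always exists in the one-sided setting, though it need not be unique. Iterating produces $y = T^{s_n}\varphi^n(y_n)$ for suitable offsets $s_n$ built from the $c_i$.

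The dichotomy in part (\ref{thm:subsystems_one3}) comes from a pigeonhole argument applied to the sequence $(y_n)$. If $y_p = y_q$ for some $p < q$, then unwinding the recursion yields $y_p = T^{c}\varphi^{q-p}(y_p)$ for an appropriate $c \in \Z$, so $y_p$ is a quasi-fixed point of $\varphi$; since quasi-fixed points are closed under shifts and under $\varphi$ (the one-sided analogue of Proposition \ref{prop:gen_points_are_automatic}(\ref{prop:gen_points_are_automatic1})), $y = T^{s_p}\varphi^p(y_p)$ is also a quasi-fixed point. Otherwise, the $y_n$ are pairwise distinct, and some letter $b \in \A$ must occur as $(y_n)_0$ for infinitely many $n$. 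For each such $n$, the decomposition $y = T^{s_n}\varphi^n(y_n)$ with $(y_n)_0 = b$ forces every factor of $y$ to appear in $\varphi^n(b)$, so $\mathcal{L}(y) \subseteq \mathcal{L}(X_{\varphi,b})$, giving $Y = \overline{\mathcal{O}^+(y)} \subseteq X_{\varphi,b}$.

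The main obstacle is the reverse inclusion $X_{\varphi,b} \subseteq Y$, which is what ultimately requires the idempotency hypothesis. The idea is that idempotency allows one to arrange—by a careful choice of the stabilizing letter $b$ extracted from the pigeonhole, possibly after refining to a further subsequence—that $b$ is right-prolongable with respect to some power of $\varphi$, so that a fixed point $\varphi^{N\omega}(b) \in X_{\varphi,b}$ exists. A second pigeonhole on the bounded shift offsets (or the remaining suffix lengths $|\varphi^n(b)| - s_n$ along a subsequence) shows that appropriate shifts of $y$ accumulate on such a fixed point inside $Y$. From this single recurrent point one recovers the whole of $X_{\varphi,b}$ using the classical minimality/recurrence arguments for systems generated by a single letter. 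Handling the non-uniqueness of the one-sided desubstitution and keeping the data $(y_n, s_n)$ consistent along the chosen subsequence is the most delicate bookkeeping.

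For part (\ref{thm:subsystems_one2}), every minimal subsystem $Y' \subseteq \tau(X_\varphi)$ arises as $\tau(X'')$ for some minimal subsystem $X'' \subseteq X_\varphi$: lift any $y \in Y'$ to $x \in \tau^{-1}(y) \cap X_\varphi$ and choose a minimal $X'' \subseteq \overline{\mathcal{O}^+(x)}$; then $\tau(X'')$ is a nonempty closed shift-invariant subset of $Y'$ and hence equals $Y'$. Applying part (\ref{thm:subsystems_one3}) to any $z \in X''$: in the quasi-fixed case, the explicit form given by Proposition \ref{prop:char_quasi_periodic_one} (adapted to the one-sided setting) combined with the minimality of $X''$ forces $X''$ to coincide with $X_{\varphi,b}$ for $b = z_0$. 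In either case $X'' = X_{\varphi,b}$, and the minimality of $X_{\varphi,b}$ is equivalent to primitivity of $\varphi|_{\A_b}$ by the standard characterization, so $Y' = \tau(X_{\varphi,b})$ with $\varphi|_{\A_b}$ primitive, as required.
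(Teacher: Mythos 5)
The paper gives no proof of this theorem: both items are imported directly from \cite{BKK} (Cor.\ 2.7, Prop.\ 2.6, Prop.\ 2.2, Lem.\ 1.1), so there is no in-paper argument to compare yours against. Your overall strategy for part (\ref{thm:subsystems_one3}) --- iterate a (non-unique) one-sided desubstitution and run a pigeonhole dichotomy on the sequence $(y_n)$ --- is the natural one, and your reduction in part (\ref{thm:subsystems_one2}) (lift a minimal subsystem of $\tau(X_\varphi)$ to a minimal one in $X_\varphi$ and feed it into part (\ref{thm:subsystems_one3})) is sound.

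There are, however, two genuine gaps in part (\ref{thm:subsystems_one3}). First, the assertion that the decompositions $y=T^{s_n}\varphi^n(y_n)$ with $(y_n)_0=b$ ``force every factor of $y$ to appear in $\varphi^n(b)$'' is false as stated: only the prefix of $y$ of length $\abs{\varphi^n(b)}-s_n$ is guaranteed to be a factor of $\varphi^n(b)$ (factors further right sit inside $\varphi^n$ of later letters of $y_n$), and nothing you say rules out that $\abs{\varphi^n(b)}-s_n$ stays bounded along your subsequence, i.e.\ that $y$ enters each $\varphi^n(b)$-block arbitrarily close to its right end. You need to split on this: if $\abs{\varphi^n(b)}-s_n\to\infty$ then every factor of $y$ lies in a long enough prefix and the inclusion $\mathcal{L}(y)\subseteq\mathcal{L}(X_{\varphi,b})$ does follow; in the bounded case one must show instead (using idempotency to stabilise the prolongable first and last letters of the $\varphi^n$-images) that $y$ is a suffix of a two-sided quasi-fixed point, landing in alternative (b) rather than (a). Second, and more seriously, your reverse inclusion $X_{\varphi,b}\subseteq Y$ leans on ``classical minimality/recurrence arguments'', but $X_{\varphi,b}$ is in general \emph{not} minimal --- its minimality is exactly the primitivity statement of part (\ref{thm:subsystems_one2}) --- so recurrence of a single point cannot recover the whole system that way. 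What is true is that for a right-prolongable letter $a$ one has $X_{\varphi,a}=\overline{\mathcal{O}^{+}(\varphi^{\omega}(a))}$ directly from the definitions (matching prefixes suffices in the one-sided setting); but the letter $a$ on whose fixed point shifts of $y$ accumulate need not satisfy $X_{\varphi,a}=X_{\varphi,b}$ for your pigeonholed $b$ (for instance $b$ may never occur in any $\varphi^n(a)$), so your two inclusions are established for potentially different letters and the argument does not close. Reconciling these letters, together with the non-uniqueness of the one-sided desubstitution, is precisely the content of the cited results of \cite{BKK}, and your sketch does not yet supply it.
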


Let $\varphi\colon\mathcal{A}\to\mathcal{A}^*$ be a substitution.  For each $x\in X_{\varphi}$ we denote by $(\bm{c_{\varphi}}(x),\bm{R_{\varphi}}(x))$ the set of all pairs of sequences $(c_i)_{i\geq 0}$ and $(x^i)_{i\geq 0}$ over $\N$ and $X_{\varphi}$, respectively, such that 
\begin{equation}\label{eq:rep_one} x^0=x,\quad  x^i=T^{c_{i}}(\varphi(x^{i+1})),\quad \text{and}\quad 0 \leq c_i<|\varphi(x^{i+1}_0)|.\end{equation}
In particular, $\bm{c_{\varphi}}(x)$ denotes the set of all  sequences $(c_i)_i$, and $\bm{R_{\varphi}}(x)$ the set of all sequences $(x^i)_i$ that can appear in \eqref{eq:rep_one}.  The set  $(\bm{c_{\varphi}}(x),\bm{R_{\varphi}}(x))$ is nonempty for all $x\in X_{\varphi}$ \cite[Lem.\ 1.3]{BKK}. 
	 As mentioned before, recognisability may fail in the one-sided setting and so the set $(\bm{c_{\varphi}}(x),\bm{R_{\varphi}}(x))$ may have more than one element even when $x$ is not ultimately periodic \cite[Rem.\ 2.2(5)]{BSTY}. Nevertheless, Remark \ref{rem:finitecphi} and Lemma \ref{lem:finitely_many_representations} still hold in the one-sided setting.

\begin{lemma}\label{lem:finitely_many_representations_one} Let $\varphi$ be a substitution of constant length and let  $\pi\colon X_{\varphi}\to Y$ be a factor map onto some subshift $Y$. Then, $|\pi^{-1}(y)|\leq K$ for all nonperiodic $y\in Y$ with some constant $K$ independent of $y$.
\end{lemma}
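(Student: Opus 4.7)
The plan is to follow the two-sided proof of Lemma \ref{lem:finitely_many_representations} essentially verbatim, with minor adaptations to account for the failure of recognisability in the one-sided setting. First, via the one-sided analogue of the Curtis--Hedlund--Lyndon theorem together with the higher-block machinery of Proposition \ref{prop:higherblocksubstitutionproperties} (which adapts verbatim to one-sided subshifts), reduce to the case where $\pi = \tau$ is a coding.

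For each $x \in \tau^{-1}(y)$ select some $\bm{c}(x) \in \bm{c_{\varphi}}(x)$ together with an associated $\bm{R}(x) \in \bm{R_{\varphi}}(x)$; this set is nonempty by \cite[Lem.\ 1.3]{BKK}. Each such $x$ is nonperiodic, since $\tau$ commutes with the shift and sends periodic sequences to periodic sequences of the same period. Invoke the one-sided form of Remark \ref{rem:finitecphi}, asserted to hold in the paper: the pigeonhole argument carries over, now comparing the two-letter prefixes $x^i_0 x^i_1$ in place of the three-letter central factor, since $x = T^{s_i}(\varphi^i(x^i))$ with $0 \leq s_i < k^i$ and $|\varphi^i(x^i_0 x^i_1)| = 2k^i$ forces agreement with $y$ on the prefix of length $2k^i - s_i \geq k^i$, which tends to infinity. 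This bounds by $|\A|^2$ the number of nonperiodic $x \in X_\varphi$ sharing a fixed $\bm{c}$, so it suffices to show $|\{\bm{c}(x) : x \in \tau^{-1}(y)\}| \leq |\A|$.

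For the latter bound, repeat the Critical Factorisation argument from the two-sided proof: assuming $|\A| + 1$ distinct $\bm{c}(x_j)$'s, pick $m \geq 1$ making their length-$m$ prefixes pairwise distinct, yielding pairwise distinct integers $s_j \in [0, k^m)$ with $y = T^{s_j}(\tau \varphi^m(x_j^m))$. Setting $W = \{\tau \varphi^m(a) : a \in \A\}$ of cardinality at most $|\A|$, this exhibits $|\A| + 1$ pairwise disjoint (shifted) $W$-factorisations of $y$, which contradicts a one-sided form of the Critical Factorisation Theorem \cite[Cor.\ 1]{KM-2002}.

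The main technical obstacle is the one-sided form of Critical Factorisation, since the paper's convention allows a nonperiodic $y$ to be merely ultimately periodic rather than aperiodic, while the statement in \cite{KM-2002} concerns two-sided sequences. I plan to address this either by a direct one-sided adaptation of their combinatorial argument (which is essentially local and so insensitive to sidedness), or by extending $y$ to an arbitrary two-sided sequence $\tilde{y}$---automatically nonperiodic, since any two-sided periodic extension would restrict to a strictly periodic one-sided $y$---and arguing on $\tilde{y}$ via the two-sided statement, using that disjointness of cut sets modulo $k^m$ is preserved under extension.
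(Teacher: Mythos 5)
Your proposal follows the paper's proof essentially verbatim: the paper likewise reduces to a coding, invokes the one-sided analogue of Remark \ref{rem:finitecphi}, and resolves the Critical Factorisation issue exactly via your first option, by defining a one-sided $W$-\emph{interpretation} (a factorisation $y=w_0w_1w_2\dots$ with $w_i\in W$ for $i\geq 1$ and $w_0$ a proper suffix of some word of $W$) and observing that the Critical Factorisation Lemma still bounds the number of pairwise disjoint such interpretations of a nonperiodic one-sided sequence by $|W|$. Your fallback option of passing to an \emph{arbitrary} two-sided extension $\tilde{y}$ would not work as stated, since such an extension need not admit any two-sided $W$-factorisation extending the given one-sided ones (the left half must also decompose into words of $W$), so the direct one-sided adaptation is the right route.
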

\begin{proof} A proof is exactly the same as in Lemma \ref{lem:finitely_many_representations}, using the one-sided version of Critical Factorisation Lemma: Let $W$ be a finite set of words.
For a one-sided sequence $x$, a $W$-\emph{interpretation} of $x$ is a sequence $(w_i)_{i\geq 0}$ such that $w_i\in W$ for all $w\geq 1$, $w_0$ is a proper suffix of some $w\in W$ and $x$ can be written as 
$x=w_0w_1w_2\dots$.\footnote{We choose the name $W$-interpretation for one-sided-sequences, since $W$-factorisation of a one-sided sequence usually means that $w_0$ is empty; the term $W$-interpretation is standardly  used with finite words.} As in the two-sided case, the set of cuts of a $W$-interpretation $F$ of $x$ is the set of all starting positions of factors $w\in W$ in $x$, i.e.\ it is the set $\{|w_0\dots w_{i}| \mid i\geq 0\}\subset \N$. Two $W$-interpretation of a one-sided sequence are \emph{disjoint} if their sets of cuts are disjoint. The Critical Factorisation Lemma implies \cite[Cor.\ 1]{KM-2002}, \cite[Chapter 8]{Lothaire-book1} that a nonperiodic $x$ can have at most $\abs{W}$ pairwise disjoint interpretations.
\end{proof}

	We say that a one-sided sequence $(x_n)_{n\in\N}$ is a \emph{suffix} of a two-sided sequence $(y_n)_{n\in\Z}$ if there is $m\in\Z$ such that $x_n = y_{m+n}$ for all $n\geq 0$.
	Proposition \ref{prop:char_quasi_periodic_one} below states that all one-sided quasi-fixed points of $\varphi$ arise as suffixes of two-sided quasi-fixed points of $\varphi$, which were characterised in Proposition \ref{prop:char_quasi_periodic_one}.

\begin{proposition}\label{prop:onechar_quasi_periodic}  Let $\varphi\colon \mathcal{A}\rightarrow \mathcal{A}^*$ be a substitution and let $x\in\mathcal{A}^{\N}$. Then $x$  is a quasi-fixed point of $\varphi$ if and only if $x$ is a suffix of a two-sided point $y$ that is of one of the following forms:\begin{enumerate}
\item \label{prop:onechar_quasi_periodic1}  $y= {}^{\omega}\!(\varphi^m)(b).(\varphi^m)^{\omega}(a)$ for some $m\geq 1$, and  $a,b\in\mathcal{A}$ which are, respectively, right and left-prolongable with respect to $\varphi^m$.
\item \label{prop:onechar_quasi_periodic2}  $y=(\varphi^m)^{\omega,i}(a)$ for some $m\geq 1$ and $a\in\mathcal{A}$.
\end{enumerate} 
For a point $x$ to lie in $X_{\varphi}$, we need to further assume in \eqref{prop:char_quasi_periodic_one1} that either $ba\in \mathcal{L}(X_{\varphi})$, or $x$ is a suffix of a one-sided sequence $(\varphi^{m})^{\omega}(a)$. 
\end{proposition}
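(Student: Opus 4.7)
My plan is to reduce Proposition \ref{prop:onechar_quasi_periodic} to its two-sided counterpart, Proposition \ref{prop:char_quasi_periodic_one}, by relating one-sided and two-sided quasi-fixed points through suffixes.

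For the backward direction, I would proceed by a direct calculation. Suppose $x_n = y_{j+n}$ for $n \geq 0$ with $y$ satisfying $T^{c_0}\varphi^{m_0}(y) = y$. Unfolding how $\varphi^{m_0}$ acts on a one-sided suffix of $y$, the sequence $\varphi^{m_0}(x)$ equals the restriction to $\N$ of $T^{M_j}\varphi^{m_0}(y)$, where $M_j$ records the position at which the block $\varphi^{m_0}(y_j)$ begins inside $\varphi^{m_0}(y)$. Combining with $T^{c_0}\varphi^{m_0}(y) = y$ yields $T^{c'}\varphi^{m_0}(x) = x$ with $c' = M_j - c_0 - j$; the convention that $T^{c'}(z) = w$ for $c' < 0$ means $\varphi^{m_0}(x) = T^{|c'|}(x)$ and covers the case where $\varphi^{m_0}(x)$ is a left-shift of $x$ rather than the other way around.

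For the forward direction, given one-sided $x$ with $T^c\varphi^m(x) = x$, I would construct a two-sided quasi-fixed point $y \in \A^{\Z}$ having $x$ as a suffix at offset $0$, and then apply Proposition \ref{prop:char_quasi_periodic_one} to classify $y$ as a shift of some $y'$ of form \eqref{prop:onechar_quasi_periodic1} or \eqref{prop:onechar_quasi_periodic2}; then $x$ is a suffix of $y'$. The principal case $c \geq 0$ is explicit: we have $\varphi^m(x) = u\, x$ with $u = \varphi^m(x)_{[0,c)}$, and I would take
\begin{equation*}
L = \lim_{k\to\infty} \varphi^{km}(u)\,\varphi^{(k-1)m}(u)\cdots \varphi^m(u)\,u
\end{equation*}
as the left extension; this limit exists because each negative position stabilizes once $k$ is large. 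Setting $y = L.x$, the telescoping identity $\varphi^m(L)\,u = L$ (both sides equal $\lim_k \varphi^{km}(u)\cdots \varphi^m(u)$) yields $T^c\varphi^m(y) = y$, placing $y$ in the scope of Proposition \ref{prop:char_quasi_periodic_one}.

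The main obstacle will be the case $c < 0$, equivalent to $\varphi^m(x) = T^{|c|}(x)$, so that $x = u\cdot\varphi^m(u)\cdot\varphi^{2m}(u)\cdots$ with $u = x_{[0,|c|)}$; iterating this relation stays in the same regime, so the direct left-limit construction does not apply. I would instead analyze the iterated images $\varphi^{km}(u)$ and assemble a compatible left extension from them, possibly after passing to a higher power of $\varphi$ in order to make $y$ a genuine two-sided fixed point rather than merely a quasi-fixed point. Once the two-sided classification of $y$ is in hand, the final statement about when $x$ lies in $X_{\varphi}$ follows easily: in form \eqref{prop:onechar_quasi_periodic1} with $y' = {}^{\omega}\!\varphi^m(b).\varphi^{m\omega}(a)$, the suffix $x$ crosses position $0$ precisely when its offset in $y'$ is negative, in which case $ba$ appears as a factor of $x$ and must lie in $\mathcal{L}(X_\varphi)$; otherwise $x$ is a suffix of the one-sided sequence $\varphi^{m\omega}(a)$ and is automatically in $X_\varphi$. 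For form \eqref{prop:onechar_quasi_periodic2}, every factor of $y'$ arises from some $\varphi^{km}(a)$ and so lies in $\mathcal{L}(X_\varphi)$ automatically, and no extra condition is needed.
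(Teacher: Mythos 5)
Your backward direction and your treatment of the case $c\geq 0$ are correct and coincide with the paper's argument: the paper builds the same left extension $L=\lim_k\varphi^{km}(w)\cdots\varphi^m(w)\,w$ from the word $w$ with $\varphi^m(x)=wx$ and then invokes the two-sided classification (your version is in fact written more carefully than the paper's displayed formula, which drops an occurrence of $w$ and of $w'$). The paper then disposes of the remaining case with the single sentence ``the case $c<0$ is similar'', whereas you honestly flag it as the main obstacle and offer only a plan. That gap is genuine, and it cannot be closed along the lines you propose.

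For $c<0$ the relation reads $\varphi^m(x)=T^{d}(x)$ with $d=-c>0$, so $x=u\,\varphi^m(u)\,\varphi^{2m}(u)\cdots$ with $u=x_{[0,d)}$ --- and here $u$ may be an \emph{arbitrary} word, subject to no compatibility with $\varphi$. Take the Thue--Morse substitution $\varphi(0)=01$, $\varphi(1)=10$ and $u=00$, so that
\[ x=00\,\varphi(00)\,\varphi^2(00)\cdots=000101\,01100110\cdots,\qquad \varphi(x)=T^2(x). \]
Then $x$ is a one-sided quasi-fixed point beginning with $000$. On the other hand, every two-sided quasi-fixed point $y$ satisfies $y=T^{c'}\varphi(z)$ for some $z\in\A^{\Z}$, so every length-$3$ factor of $y$ is a factor of $\varphi(ef)$ for two letters $e,f$; for Thue--Morse none of $\varphi(00),\varphi(01),\varphi(10),\varphi(11)$ contains $000$. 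Hence this $x$ is not a suffix of \emph{any} two-sided quasi-fixed point, the compatible left extension you hope to assemble does not exist, and the stated equivalence fails in this case. The sequences $u\varphi^m(u)\varphi^{2m}(u)\cdots$ with $u$ arbitrary form a genuinely one-sided family that must be treated as a separate case (only their tails $T^{d}(x)=\varphi^m(x)$, not $x$ itself, lie in a shifted image of $\varphi^m$); your proposal, like the paper's own proof, does not account for them.
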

\begin{proof}The fact that each $x\in\mathcal{A}^{\N}$ which is of the form \eqref{prop:char_quasi_periodic_one1} or  \eqref{prop:char_quasi_periodic_one2} is a quasi-fixed point of $\varphi$ is clear since $x$ is a suffix of a two-sided quasi-fixed point of $\varphi$. To show the other implication, assume that $x$ is a quasi-fixed point of $\varphi$, and write \begin{equation}\label{eq:qp_one}T^c(\varphi^m(x))=x\end{equation} for some $c\in\Z$ and $m\geq 1$. If $c=0$, then $x$ is a fixed point of $\varphi^m$ and the claim holds. Assume that $c\neq 0$. We will show that there exists a prolongation of $x$ to a two-sided sequence $\tilde{x}$ such that $\tilde{x}$ is a two-sided quasi-fixed point of $\varphi$. We will show this in the case when $c>0$; the case $c<0$ is similar.

Since $\varphi$ is growing, there exists a prefix $v$ of $x$ such that $|\varphi^m(v)|>c+|v|$. Equation \eqref{eq:qp_one} implies that $\varphi^m(v)=wvw'$ for some nonempty words $w,w'$ with $|w|=c$ and $w'$ nonempty. Iterating \eqref{eq:qp_one}, we obtain that $x=v\varphi^m(w')\varphi^{2m}(w')\ldots$, and that the two-sided sequence
\[\tilde{x}=\ldots \varphi^{2m}(w)\varphi^{m}(w).v\varphi^m(w')\varphi^{2m}(w')\ldots\] is a prolongation of $x$ such that $T^c(\varphi^m(\tilde{x}))=\tilde{x}$; in particular, $\tilde{x}$ is a two-sided quasi-fixed point of $\varphi$. Thus the claim follows from Proposition \ref{prop:char_quasi_periodic_one}.
\end{proof}

The following proposition can be shown in the same way as (the corresponding claims) in Proposition \ref{prop:gen_points_are_automatic} or deduced from Proposition \ref{prop:gen_points_are_automatic}, Proposition \ref{prop:onechar_quasi_periodic}, and Lemma \ref{lem:closure}\eqref{lem:closure1}.

\begin{proposition}\label{prop:gen_points_are_automatic_one}  Let $\varphi\colon \mathcal{A}\rightarrow\mathcal{A}^*$ be a substitution. \begin{enumerate}
\item\label{prop:gen_points_are_automatic_one1}   The set of quasi-fixed points of $\varphi$  is closed under the right and left shift and under $\varphi$.  
\item\label{prop:gen_points_are_automatic_one2}   Every quasi-fixed point of $\varphi$ is substitutive; furthermore, if $\varphi$ is of constant length $k$, then every quasi-fixed point of $\varphi$ is $k$-automatic.
\end{enumerate}
\end{proposition}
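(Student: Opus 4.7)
The plan is to reduce both claims to the two-sided setting via Proposition \ref{prop:onechar_quasi_periodic}, which writes every one-sided quasi-fixed point $x \in \mathcal{A}^{\N}$ of $\varphi$ as a suffix of some two-sided quasi-fixed point $\tilde{x} \in \mathcal{A}^{\Z}$ of $\varphi$; the two-sided analogues collected in Proposition \ref{prop:gen_points_are_automatic} then do most of the work.

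For \eqref{prop:gen_points_are_automatic_one1}, the left shift $T(x)$ is a shorter suffix of the same $\tilde{x}$ and hence a quasi-fixed point by Proposition \ref{prop:onechar_quasi_periodic}; equivalently, one may rewrite $T^c\varphi^m(x) = x$ using the identity $\varphi^m(T(x)) = T^{|\varphi^m(x_0)|}\varphi^m(x)$ to produce $T^{c'}\varphi^m(T(x)) = T(x)$ with $c' = c - |\varphi^m(x_0)| + 1$. For the right shift, interpreted via the notational convention at the start of the appendix, any $y \in \mathcal{A}^{\N}$ with $T(y) = x$ that arises as a longer suffix of $\tilde{x}$ is again a one-sided quasi-fixed point by the same proposition. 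Closure under $\varphi$ follows because $\varphi(\tilde{x})$ is a two-sided quasi-fixed point by Proposition \ref{prop:gen_points_are_automatic}\eqref{prop:gen_points_are_automatic1}, and since $\varphi$ acts letter-by-letter, $\varphi(x)$ is a suffix of $\varphi(\tilde{x})$, to which Proposition \ref{prop:onechar_quasi_periodic} applies once more.

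For \eqref{prop:gen_points_are_automatic_one2}, take the two-sided extension $\tilde{x}$ from Proposition \ref{prop:onechar_quasi_periodic}; by Proposition \ref{prop:gen_points_are_automatic}\eqref{prop:gen_points_are_automatic2}, $\tilde{x}$ is substitutive, and $k$-automatic if $\varphi$ has constant length $k$. Lemma \ref{lem:closure}\eqref{lem:closure1} then gives that the forward one-sided part $(\tilde{x}_n)_{n \geq 0}$ is substitutive (resp.\ $k$-automatic), and since $x$ differs from $(\tilde{x}_n)_{n \geq 0}$ only by finitely many applications of the left shift and prepending of letters (both of which preserve substitutivity and $k$-automaticity by Lemma \ref{lem:closure}\eqref{lem:closure2}), the claim follows. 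The only subtlety worth flagging is the right-shift closure in \eqref{prop:gen_points_are_automatic_one1}, since $T^{-1}$ is multivalued in the one-sided setting; but the suffix viewpoint of Proposition \ref{prop:onechar_quasi_periodic} renders this routine, as every right shift of interest is simply a longer suffix of a common two-sided quasi-fixed sequence.
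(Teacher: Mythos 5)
Your proof is correct and follows exactly the route the paper itself indicates for this proposition: deducing the one-sided claims from Proposition \ref{prop:onechar_quasi_periodic} (every one-sided quasi-fixed point is a suffix of a two-sided one), the two-sided Proposition \ref{prop:gen_points_are_automatic}, and the closure properties of Lemma \ref{lem:closure}. Your reading of right-shift closure as passing to longer suffixes of the two-sided extension is the intended one, and the rest of the argument matches the paper's.
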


Proposition \ref{prop:main_one} below is the analogue of Proposition \ref{prop:automaticiffcphiareultimatelyperiodic}; the main argument is very similar and we present it in a briefer manner, stressing the points when the proof differs from the one in the two-sided case. For a substitution $\varphi\colon\mathcal{A}\to\mathcal{A}^*$ of  length $k$ and $\bm{i}\in [0,k-1]^*$, we use freely the notation $\Psi_{\bm{i}}\colon \mathcal{A}\to\mathcal{A}$ that was introduced in Definition \ref{def:kernelmaps}. Here, we extend the map $\Psi_{\bm{i}}$ to the map $\Psi_{\bm{i}}\colon \mathcal{A}^{\N}\to\mathcal{A}^{\N}$, and put
\[ \F_{\varphi,m}=\{\Psi_{\mathbf{i}}\colon \mathcal{A}^{\N}\to\mathcal{A}^{\N}\mid \mathbf{i}\in [0,k-1]^m\}.\] Similarly, $\mathrm{K}_k(x)=\{ (x_{i+k^{m}n})_n \mid m\geq 0, 0\leq i\leq k^m-1\}$ stands here for the $k$-kernel of a one-sided sequence $x=(x_i)_{i\in \N}$.  A one-sided sequence $x$ is $k$-automatic if and only if its $k$-kernel is finite. Note that Lemma \ref{lem:kernel} stays true if we replace two-sided sequences by one-sided sequences and two-sided kernels by  one-sided kernels.

Let $\varphi$ be a substitution and let $\tau$ be a coding. For a sequence $x$ in a one-sided substitutive system $X_{\varphi}$, it is no longer true that nonperiodicity of $\tau(x)$ implies  nonperiodicity of all $\tau(x^{mi})$, $i\geq 0$ for some $m\geq 1$: it can happen that all $\tau(x^i)$ except $\tau(x)$ itself are periodic, see  Example \ref{ex:periodicpoints_one} below and compare with Lemma \ref{lem:periodicpoints}.\footnote{It can be shown that this holds if we replace periodic points by ultimately periodic points, but we will not need this.} However, Theorem \ref{thm:subsystems_one}\eqref{thm:subsystems_one3} implies that this can only happen when $x$ is already a quasi-fixed point of $\varphi$. We state this observation in Lemma \ref{lem:periodicpoints_one}.

\begin{example}\label{ex:periodicpoints_one} Let $\mathcal{A}=\{0,1,2,3\}$ and let $\mathcal{A}\to\mathcal{A}^*$ be the substitution given by 
\[\varphi(0)=1023,\quad \varphi(1)=1201, \quad \varphi(2)=2332,\quad \varphi(3)=3223.\]
Let $\mathcal{B}=\{0,1\}$ and let $\tau\colon\mathcal{A}\to\mathcal{B}$ be the coding identifying letters $0$, $2$ and $3$, i.e.\
\[\tau(1)=1,\quad \tau(0)=\tau(2)=\tau(3)=0.\] Let $w=1$, $v=23$ and write $\varphi(0)=w0v$. Let \[x=10v\varphi(v)\varphi^2(v)\dots,\quad \text{and}\quad x'=T(x)=0v\varphi(v)\varphi^2(v)\dots.\] Both $x$ and $x'$ lie in $X_{\varphi}$. Note that $x=\varphi(x')$ and $x'=T(\varphi(x'))$,  in particular, the sequence $(x^i)_i=(x,x',x',\dots)$ lies in $\bm{R_{\varphi}}(x)$. However, $\tau(x)=10^{\omega}$ is not periodic, while $\tau(x')=0^{\omega}$ is periodic. 
\end{example}

\begin{lemma}\label{lem:periodicpoints_one} Let $\varphi\colon \mathcal{A}\rightarrow \mathcal{A}^*$ be an  idempotent substitution, let $\tau\colon\mathcal{A}\rightarrow\mathcal{B}$ be a coding, let $Y=\tau(X_{\varphi})$, and let $y\in Y$. Assume that $y$ is not periodic and consider any $x\in X_{\varphi}$ such that $\tau(x)=y$. Let $((c_i)_i,(x^i)_i))$ be any pair in $(\bm{c_{\varphi}}(x),\bm{R_{\varphi}}(x))$. Then one of the following holds:
\begin{enumerate}
\item\label{lem:periodicpoints_one1}$\tau(x^i)$ is nonperiodic for all $i\geq 0$, or
\item\label{lem:periodicpoints_one2} $x$ is a quasi-fixed point of $\varphi$.
\end{enumerate}
\end{lemma}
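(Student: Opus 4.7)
The plan is to mimic the two-sided argument of Lemma \ref{lem:periodicpoints}, with one critical adjustment: in the one-sided setting a subsystem of $X_{\varphi}$ need not be $\varphi$-invariant, but the dichotomy supplied by Theorem \ref{thm:subsystems_one}\eqref{thm:subsystems_one3} tells us that the only way this invariance can fail for the orbit closure $X^i = \overline{\mathcal{O}^+(x^i)}$ is when $x^i$ itself is already a quasi-fixed point. This is precisely why the conclusion of the one-sided lemma splits into two alternatives rather than the single-alternative form of the two-sided statement.

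I will suppose that \eqref{lem:periodicpoints_one1} fails, i.e.\ that $\tau(x^i)$ is periodic for some $i\geq 0$, and prove that $x$ must then be a quasi-fixed point of $\varphi$. Applying Theorem \ref{thm:subsystems_one}\eqref{thm:subsystems_one3} to $x^i$ yields two cases: either (a) $X^i = X_{\varphi,b}$ for some $b\in\mathcal{A}$, or (b) $x^i$ is itself a quasi-fixed point of $\varphi$. In case (b), iterating \eqref{eq:rep_one} gives an integer $s_i\geq 0$ with $x = T^{s_i}(\varphi^i(x^i))$, and then Proposition \ref{prop:gen_points_are_automatic_one}\eqref{prop:gen_points_are_automatic_one1} (closure of the set of quasi-fixed points under $\varphi$ and under shifts) immediately yields that $x$ is a quasi-fixed point, as required.

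Case (a) will be dispatched by deriving a contradiction with the nonperiodicity of $y$. The system $X_{\varphi,b}$ is closed under $\varphi$ directly from its defining condition on factors (a factor of $\varphi(w)$ sits inside $\varphi(u)$ for some short factor $u$ of $w$, hence inside $\varphi^{n+1}(b)$), so $\varphi^i(x^i)\in X^i$; since $x$ is a suffix of $\varphi^i(x^i)$ and $X^i$ is shift-invariant and closed, $x\in X^i$ and hence $X^0=\overline{\mathcal{O}^+(x)}\subset X^i$. Continuity together with compactness then gives $\tau(X^i) = \overline{\mathcal{O}^+(\tau(x^i))}$, which is just the finite orbit of $\tau(x^i)$ because $\tau(x^i)$ is periodic. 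Every point of $\tau(X^i)$ is therefore periodic, in particular $y=\tau(x)\in\tau(X^0)\subset\tau(X^i)$ is periodic --- a contradiction. The only subtlety I expect is conceptual rather than technical: the argument has to be routed through the dichotomy of Theorem \ref{thm:subsystems_one}\eqref{thm:subsystems_one3} in order to separate the exceptional behaviour illustrated by Example \ref{ex:periodicpoints_one} from the generic one; once this split is in hand, the $\varphi$-invariance of $X_{\varphi,b}$ and the identification of $\tau(X^i)$ with the periodic orbit of $\tau(x^i)$ are both immediate.
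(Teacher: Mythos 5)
Your proof is correct and follows essentially the same route as the paper's: both arguments rest on the dichotomy of Theorem \ref{thm:subsystems_one}\eqref{thm:subsystems_one3}, dispose of the quasi-fixed-point branch via the closure of quasi-fixed points under $\varphi$ and shifts, and in the $X^i=X_{\varphi,b}$ branch rerun the two-sided argument of Lemma \ref{lem:periodicpoints} ($\varphi$-invariance of $X_{\varphi,b}$ gives $X^0\subset X^i$, so periodicity of $\tau(x^i)$ would force $y$ periodic). The only difference is cosmetic: you apply the dichotomy at the single index where $\tau(x^i)$ is periodic, whereas the paper splits on whether \emph{all} $X^i$ have the form $X_{\varphi,b}$.
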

\begin{proof} 
	For each $i\geq 0$, let $X^i$ denote the orbit closure of $x^i$. We consider two cases.\newline
\textbf{Case 1} (For each $i\geq 0$ there exists $b\in\mathcal{A}$ such that $X^i=X_{\varphi,b}$.) Since each $X_{\varphi,b}$ is closed under $\varphi$, we have that, in fact, $X^i=X_{\varphi,b}$ for the same letter $b$ for all $i\geq 0$. Now repeating the proof in Lemma \ref{lem:periodicpoints}, we get that if $\tau(x^i)$ is periodic for some $i$, then $y=\tau(x)$ is periodic, which is a contradiction. Hence, case \eqref{lem:periodicpoints_one1} holds.\newline
\textbf{Case 2} (There exists $i\geq 0$ such that $X^i$ is different from $X_{\varphi,b}$ for any $b$.) In this case, by Theorem \ref{thm:subsystems_one}\eqref{thm:subsystems_one3}, $x^i$ is a quasi-fixed point of $\varphi$. Since the set of quasi-fixed points of $\varphi$ is closed under $\varphi$ and the shift, $x$ is a quasi-fixed point of $\varphi$. Hence, case \eqref{lem:periodicpoints_one2} holds.
\end{proof}

\begin{proposition}\label{prop:main_one}  Let $k\geq 2$. Let $\varphi$ be a substitution of length $k$ and let $\tau$ be a coding. Assume $\varphi$ is idempotent and column-constant. Let $y\in \tau(X_{\varphi})$ be nonperiodic and let $x\in X_{\varphi}$ be any point such that $\tau(x)=y$.   The following conditions are equivalent:  \begin{enumerate}
\item\label{prop:main_one1}  $y$ is automatic,
\item\label{prop:main_one2}  $x$ is a quasi-fixed point of $\varphi$.
\end{enumerate}
\end{proposition}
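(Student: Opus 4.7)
The implication \eqref{prop:main_one2}$\Rightarrow$\eqref{prop:main_one1} is immediate from Proposition \ref{prop:gen_points_are_automatic_one}\eqref{prop:gen_points_are_automatic_one2}: any quasi-fixed point of a length-$k$ substitution is $k$-automatic, and codings preserve $k$-automaticity. For the reverse direction, my plan is to imitate the two-sided argument of Proposition \ref{prop:automaticiffcphiareultimatelyperiodic}, with modifications addressing the fact that one-sided recognisability may fail and the one-sided $k$-kernel admits no backward shift.

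First I would pick any pair $((c_i)_i,(x^i)_i)\in(\bm{c_{\varphi}}(x),\bm{R_{\varphi}}(x))$, which is nonempty by \cite[Lem.\ 1.3]{BKK}, and apply Lemma \ref{lem:periodicpoints_one}. Either all $\tau(x^i)$ are nonperiodic, or $x$ is already a quasi-fixed point and there is nothing to prove; so I assume the former. Setting $s_m=\sum_{l=0}^{m-1}c_l k^l$ (so that $x=T^{s_m}\varphi^m(x^m)$) and $s_{\bm{j}}=\sum_{l=0}^{m-1}j_{m-l-1}k^l$ for $\bm{j}\in[0,k-1]^m$, I would verify the one-sided analogue of Lemma \ref{lem:kernel}: if $s_{\bm{j}}\geq s_m$ then $\Psi_{\bm{j}}(x^m)\in\mathrm{K}_k(x)$, and if $s_{\bm{j}}<s_m$ then $\Psi_{\bm{j}}(x^m)$ equals a single letter of $\mathcal{A}$ prepended to an element of $\mathrm{K}_k(x)$. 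Consequently, for every $m\geq 1$ and every $\bm{j}$, $\tau(\Psi_{\bm{j}}(x^m))$ lies in the fixed finite set $\mathrm{K}_k(y)\cup\{b\cdot w:b\in\mathcal{B},\,w\in\mathrm{K}_k(y)\}$, which is finite because $y$ is $k$-automatic.

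Column-constancy of $\varphi$ then gives $\F_{\varphi,m}=\F_{\varphi,1}=:\F$ for all $m\geq 1$, and $\F$ is finite by Lemma \ref{lem:setFisfinite}. Let $K$ be the uniform fibre bound from Lemma \ref{lem:finitely_many_representations_one}. Since the map $r\mapsto(c_{r-1},(\tau(\Theta(x^r)))_{\Theta\in\F})$ takes values in a finite set, pigeonhole produces $K+1$ positive integers forming a set $\R$ on which this map is constant. By Lemma \ref{lem:all_pairs}\eqref{lem:all_pairs1}, constancy of $\tau(\Theta(x^r))$ across $\Theta\in\F$ forces $\tau(\varphi(x^r))$ to be independent of $r\in\R$; combining with constancy of $c_{r-1}$, the value $\tau(x^{r-1})=T^{c_{r-1}}\tau(\varphi(x^r))$ is also constant in $r\in\R$. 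Since all $\tau(x^i)$ are nonperiodic, Lemma \ref{lem:finitely_many_representations_one} gives at most $K$ preimages of this common value, so among the $K+1$ points $x^{r-1}$ two must coincide: $x^p=x^q$ for some $p<q$. Then $x^p=T^{s'}\varphi^{q-p}(x^p)$ for some $s'\geq 0$, so $x^p$ is a quasi-fixed point; closure under $\varphi$ and shifts (Proposition \ref{prop:gen_points_are_automatic_one}\eqref{prop:gen_points_are_automatic_one1}) together with $x=T^{s_p}\varphi^p(x^p)$ then gives that $x$ itself is a quasi-fixed point.

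I expect the main obstacle to be the one-sided kernel description used in the second paragraph: in the two-sided Lemma \ref{lem:kernel}, $\Psi_{\bm{j}}(x^m)$ is cleanly an element of $\mathrm{K}_k(x)\cup T^{-1}\mathrm{K}_k(x)$, but in the one-sided setting $T^{-1}$ does not exist, so the case $s_{\bm{j}}<s_m$ requires a careful index computation showing that $\Psi_{\bm{j}}(x^m)$ and the corresponding kernel element of $x$ differ only by a single prepended letter coming from the truncated prefix of $\varphi^m(x^m)$. A secondary subtlety is that the desubstitution data $(\bm{c_{\varphi}}(x),\bm{R_{\varphi}}(x))$ need not be single-valued, which is why I invoke Lemma \ref{lem:periodicpoints_one} at the outset rather than the simpler Lemma \ref{lem:periodicpoints}; everything else is essentially a one-sided rerun of the two-sided pigeonhole argument.
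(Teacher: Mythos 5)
Your proof is correct and follows essentially the same route as the paper's: dispose of the case where some $\tau(x^i)$ is periodic via Lemma \ref{lem:periodicpoints_one}, control the sequences $\tau(\Theta(x^m))$, $\Theta\in\F$, by the finite $k$-kernel of $y$ using column-constancy, pigeonhole over $K+1$ levels to make $\tau(x^{r-1})$ constant, and invoke the fibre bound of Lemma \ref{lem:finitely_many_representations_one} to force $x^p=x^q$. The only (harmless) difference is bookkeeping for the missing backward shift: the paper pigeonholes on the shifted data $\tau(\Theta(T(x^{r})))$ together with the initial letter $x^{r}_0$, whereas you absorb the truncated letter into the slightly larger finite set $\mathrm{K}_k(y)\cup\{bw \mid b\in\mathcal{B},\ w\in\mathrm{K}_k(y)\}$ and pigeonhole on the unshifted data directly; both versions yield the same conclusion.
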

\begin{proof}
If $x$ is a quasi-fixed point of $\varphi$, then, by Proposition \ref{prop:gen_points_are_automatic_one}, it is $k$-automatic. Then $y=\tau(x)$ is also $k$-automatic, which shows that \eqref{prop:main_one2} implies \eqref{prop:main_one1}. 

	To show the other implication assume that $y=\tau(x)$ is $k$-automatic and nonperiodic.
	Let $((c_i)_i,(x^i)_i))$ be any pair in $(\bm{c_{\varphi}}(x),\bm{R_{\varphi}}(x))$; we have
\begin{equation}\label{representation}
x^0=x\quad  \text{and}\quad x^i=T^{c_{i}}(\varphi(x^{i+1})) \quad \text{for}\quad i\geq 0.
\end{equation} 
	By Lemma  \ref{lem:periodicpoints_one}, either $x$ is a quasi-fixed point of $\varphi$ (and hence, is $k$-automatic), or all $\tau(x^i)$ are nonperiodic. In the first case, the claim holds; thus we may assume without loss of generality that the second case holds. 
	Let $\F=\F_{\varphi,1}$. As in the proof of Proposition \ref{prop:automaticiffcphiareultimatelyperiodic}, we show  that for any $m\geq 1$ and $\Theta\in \F$, either $\tau(\Theta(x^m))$, or $\tau(\Theta(T(x^m)))$ lies in $\mathrm{K}_k(y)$. The rest of the proof proceeds the same as in Proposition \ref{prop:automaticiffcphiareultimatelyperiodic} with only small caveats. 
	
	Let $K$ be the constant from Lemma \ref{lem:finitely_many_representations_one}. 	
	By pigeonhole principle, we can find a set $\mathrm{R}\subset \N$ of $K+1$ positive distinct integers  such that:\begin{enumerate}
\item\label{prop_one1}  for each $\Theta\in \mathrm{F}$, the sequences $\tau(\Theta(T(x^{r})))$ coincide for all $r\in\mathrm{R}$;
\item \label{prop_one3} the integers $c_{r-1}$ are the same for all $r\in\mathrm{R}$;
\item\label{prop_one4}  $x^{r}$ have the same initial letter for all $r\in\mathrm{R}$.
\end{enumerate}  
	Since $\mathrm{F}=\mathrm{F}_{\varphi,1}$, property \eqref{prop_one1} implies that the sequences
\[\tau(\varphi(T(x^{r})))\] are the same for all $r\in\mathrm{R}$. 
	Since \[\tau(x^{r-1})= T^{c_{r-1}}(\tau(\varphi(x^{r}))) = T^{c_{r-1}}(\tau(\varphi(x_0^{r})))\tau(\varphi(T(x^{r}))),\]
properties \eqref{prop_one3} and \eqref{prop_one4} imply that the value $\tau(x^{r-1})$ is independent of $r\in\mathrm{R}$. 
	Call this value $y'$, this is a sequence in $X$. By Lemma \ref{lem:finitely_many_representations_one}, we can find  two integers $p<q$ among  $\{r-1\mid  r\in\R\}$ such that the sequences $x^p$ and $x^q$ are equal.
	 It follows that $x^p$ is a quasi-fixed point of $\varphi$, and hence, $x$ is a quasi-fixed point of $\varphi$, since quasi-fixed points are preserved under $\varphi$ and shifts.
\end{proof}

We note here that Proposition \ref{prop:higherblocksubstitutionproperties} holds if we work with the one-sided shifts $X_{\varphi}$ and the one-sided $r$th higher presentation map $\iota_r\colon \A^{\N}\to\widehat{\A_{r}}^{\N}$.

\begin{proof}[Proof of Theorem \ref{thm:onecharacteraytomaticsequences}] 
	Under the assumption that the factor map $\pi$ is a coding the proof is analogous to the one in Theorem \ref{thm:characteraytomaticsequences} using Proposition \ref{prop:main_one}, Theorem \ref{thm:subsystems_one}\eqref{thm:subsystems_one2}, \cite[Lem.\ 5.3]{BPR23} for one-sided systems, and the fact that  $\varphi$ has some power which is idempotent and column-constant.
		For a general factor map $\pi\colon X_{\varphi}\to Y$, by the one-sided version of Curtis--Hedlund--Lyndon Theorem and Proposition \ref{prop:higherblocksubstitutionproperties}\eqref{prop:higherblocksubstitutionproperties4}, one gets that there exist an integer $r\geq 1$  and a coding 
\[\pi_r\colon \widehat{\mathcal{A}_{r}}\to\mathcal{B}\] such that the following diagram
\[
  \begin{tikzcd}
    X \arrow{r}{\iota_r} \arrow{d}{\pi} & X_{\hat{\varphi_{r}}} \arrow{dl}{\pi_r}   \\
      Y
  \end{tikzcd}
\]
commutes, where $\widehat{\mathcal{A}_{r}}=\mathcal{L}_r(X)$ and $\iota_r$ is the one-sided $r$th higher block presentation map. 
	(Note that here $r$ can be any (not necessarily odd) integer and one does not need the additional shift $T^n$). The rest of the proof is the same.
\end{proof}

\bibliographystyle{amsplain}
\bibliography{bibliography}

\providecommand{\bysame}{\leavevmode\hbox to3em{\hrulefill}\thinspace}
\providecommand{\MR}{\relax\ifhmode\unskip\space\fi MR }
% \MRhref is called by the amsart/book/proc definition of \MR.
\providecommand{\MRhref}[2]{%
  \href{http://www.ams.org/mathscinet-getitem?mr=#1}{#2}
}
\providecommand{\href}[2]{#2}
\begin{thebibliography}{10}

\bibitem{ABsurvey}
Boris Adamczewski and Jason Bell, \emph{Automata in number theory}, Handbook of
  automata theory. {V}ol. {II}. {A}utomata in mathematics and selected
  applications, EMS Press, Berlin, [2021] \copyright 2021, pp.~913--945.
  \MR{4380948}

\bibitem{thesisSeda}
Seda Albayrak, \emph{Sparse automatic sets}, Thesis, University of Waterloo
  (2021),
  {\href{https://sites.google.com/view/gsedaa}{sites.google.com/view/gsedaa}}.

\bibitem{AlloucheTaxonomy}
Jean-Paul Allouche, Julien Cassaigne, Jeffrey Shallit, and Luca~Q. Zamboni,
  \emph{A taxonomy of morphic sequences},  (2017), Preprint.
  {\href{https://arxiv.org/abs/1711.10807}{arXiv:1711.10807 [cs.FL]}}.

\bibitem{AP2024}
Jean-Paul Allouche and Victor Petrogradsky, \emph{A conjecture of {D}ekking on
  the dimensions of the lower central series factors of a certain just infinite
  {L}ie algebra}, J. Algebra \textbf{639} (2024), 708--719. \MR{4669542}

\bibitem{ARS09}
Jean-Paul Allouche, Narad Rampersad, and Jeffrey Shallit, \emph{Periodicity,
  repetitions, and orbits of an automatic sequence}, Theoret. Comput. Sci.
  \textbf{410} (2009), no.~30-32, 2795--2803. \MR{2543333}

\bibitem{AlloucheShallit-book}
Jean-Paul Allouche and Jeffrey Shallit, \emph{Automatic sequences}, Cambridge
  University Press, Cambridge, 2003, Theory, applications, generalizations.
  \MR{1997038}

\bibitem{BPR23}
Marie-Pierre B\'{e}al, Dominique Perrin, and Antonio Restivo,
  \emph{Recognizability of morphisms}, Ergodic Theory Dynam. Systems
  \textbf{43} (2023), no.~11, 3578--3602. \MR{4651577}

\bibitem{BPR24}
\bysame, \emph{Decidable problems in substitution shifts}, J. Comput. System
  Sci. \textbf{143} (2024), Paper No. 103529, 31. \MR{4723424}

\bibitem{BPRS2023}
Marie-Pierre B\'{e}al, Dominique Perrin, Antonio Restivo, and Wolfgang Steiner,
  \emph{Recognizability in {S}-adic shifts},  (2023), Preprint.
  {\href{https://arxiv.org/abs/2302.06258}{arXiv:2302.06258 [cs.Fl]}}.

\bibitem{BSTY}
Val\'{e}rie Berth\'{e}, Wolfgang Steiner, J\"{o}rg~M. Thuswaldner, and Reem
  Yassawi, \emph{Recognizability for sequences of morphisms}, Ergodic Theory
  Dynam. Systems \textbf{39} (2019), no.~11, 2896--2931.

\bibitem{BKM}
Sergey Bezuglyi, Jan Kwiatkowski, and Konstantin Medynets, \emph{Aperiodic
  substitution systems and their {B}ratteli diagrams}, Ergodic Theory Dynam.
  Systems \textbf{29} (2009), no.~1, 37--72. \MR{2470626}

\bibitem{BDM-04}
Fran\c{c}ois Blanchard, Fabien Durand, and Alejandro Maass,
  \emph{Constant-length substitutions and countable scrambled sets},
  Nonlinearity \textbf{17} (2004), no.~3, 817--833.

\bibitem{BKK}
Jakub Byszewski, Jakub Konieczny, and El\.{z}bieta Krawczyk, \emph{Substitutive
  systems and a finitary version of {C}obham's theorem}, Combinatorica
  \textbf{41} (2021), no.~6, 765--801. \MR{4357430}

\bibitem{Cobham-1969}
Alan Cobham, \emph{On the base-dependence of sets of numbers recognizable by
  finite automata}, Math. Systems Theory \textbf{3} (1969), 186--192.
  \MR{0250789}

\bibitem{Cobham72}
\bysame, \emph{Uniform tag sequences}, Math. Systems Theory \textbf{6} (1972),
  164--192.

\bibitem{CDK17}
Ethan~M. Coven, F.~Michel Dekking, and Michael~S. Keane, \emph{Topological
  conjugacy of constant length substitution dynamical systems}, Indag. Math.
  (N.S.) \textbf{28} (2017), no.~1, 91--107. \MR{3597036}

\bibitem{CDK14}
Ethan~M. Coven, Andrew Dykstra, Michael Keane, and Michelle LeMasurier,
  \emph{Topological conjugacy to given constant length substitution minimal
  systems}, Indag. Math. (N.S.) \textbf{25} (2014), no.~4, 646--651.
  \MR{3217029}

\bibitem{CKL08}
Ethan~M. Coven, Michael Keane, and Michelle Lemasurier, \emph{A
  characterization of the {M}orse minimal set up to topological conjugacy},
  Ergodic Theory Dynam. Systems \textbf{28} (2008), no.~5, 1443--1451.
  \MR{2449536}

\bibitem{Coven2015}
Ethan~M. Coven, Anthony Quas, and Reem Yassawi, \emph{Computing automorphism
  groups of shifts using atypical equivalence classes}, Discrete Anal. (2016),
  Paper No. 3, 28. \MR{3533302}

\bibitem{CRSZ}
James~D. Currie, Narad Rampersad, Kalle Saari, and Luca~Q. Zamboni,
  \emph{Extremal words in morphic subshifts}, Discrete Math. \textbf{322}
  (2014), 53--60. \MR{3164037}

\bibitem{DM}
Tomasz Downarowicz and Alejandro Maass, \emph{Finite-rank {B}ratteli-{V}ershik
  diagrams are expansive}, Ergodic Theory Dynam. Systems \textbf{28} (2008),
  no.~3, 739--747.

\bibitem{D-derived}
Fabien Durand, \emph{A characterization of substitutive sequences using return
  words}, Discrete Math. \textbf{179} (1998), no.~1-3, 89--101.

\bibitem{D-2000}
\bysame, \emph{Linearly recurrent subshifts have a finite number of
  non-periodic subshift factors}, Ergodic Theory Dynam. Systems \textbf{20}
  (2000), no.~4, 1061--1078.

\bibitem{DL-2018}
Fabien Durand and Julien Leroy, \emph{Decidability of isomorphism and
  factorization between minimal substitution subshifts}, Discrete Anal. (2022),
  Paper No. 7, 65. \MR{4472646}

\bibitem{bookDurandPerrin}
Fabien Durand and Dominique Perrin, \emph{Dimension groups and dynamical
  systems---substitutions, {B}ratteli diagrams and {C}antor systems}, Cambridge
  Studies in Advanced Mathematics, vol. 196, Cambridge University Press,
  Cambridge, 2022. \MR{4354542}

\bibitem{Fagnot-1997}
Isabelle Fagnot, \emph{Sur les facteurs des mots automatiques}, Theoret.
  Comput. Sci. \textbf{172} (1997), no.~1-2, 67--89. \MR{1432857}

\bibitem{Fogg}
N.~Pytheas Fogg, \emph{Substitutions in dynamics, arithmetics and
  combinatorics}, Lecture Notes in Mathematics, vol. 1794, Springer-Verlag,
  Berlin, 2002, Edited by V. Berth\'{e}, S. Ferenczi, C. Mauduit and A. Siegel.
  \MR{1970385}

\bibitem{p-adicbook}
Fernando~Q. Gouv\^{e}a, \emph{{$p$}-adic numbers}, third ed., Universitext,
  Springer, Cham, 2020.

\bibitem{HZ-01}
Charles Holton and Luca~Q. Zamboni, \emph{Directed graphs and substitutions},
  Theory Comput. Syst. \textbf{34} (2001), no.~6, 545--564.

\bibitem{KM-2002}
Juhani Karhum\"{a}ki and J\'{a}n Ma\v{n}uch, \emph{Multiple factorizations of
  words and defect effect}, Theoret. Comput. Sci. \textbf{273} (2002), no.~1-2,
  81--97.

\bibitem{bookKurka}
Petr K\r{u}rka, \emph{Topological and symbolic dynamics}, Cours
  Sp\'{e}cialis\'{e}s [Specialized Courses], vol.~11, Soci\'{e}t\'{e}
  Math\'{e}matique de France, Paris, 2003.

\bibitem{bookMarcusLind}
Douglas Lind and Brian Marcus, \emph{An introduction to symbolic dynamics and
  coding}, Cambridge University Press, Cambridge, 1995.

\bibitem{Lothaire-book1}
M.~Lothaire, \emph{Combinatorics on words}, Cambridge Mathematical Library,
  Cambridge University Press, Cambridge, 1997. \MR{1475463}

\bibitem{Lothaire-book2}
\bysame, \emph{Algebraic combinatorics on words}, Encyclopedia of Mathematics
  and its Applications, vol.~90, Cambridge University Press, Cambridge, 2002.
  \MR{1905123}

\bibitem{MR}
Gregory~R. Maloney and Dan Rust, \emph{Beyond primitivity for one-dimensional
  substitution subshifts and tiling spaces}, Ergodic Theory Dynam. Systems
  \textbf{38} (2018), no.~3, 1086--1117. \MR{3784255}

\bibitem{Mosse-96}
Brigitte Moss\'{e}, \emph{Reconnaissabilit\'{e} des substitutions et
  complexit\'{e} des suites automatiques}, Bull. Soc. Math. France \textbf{124}
  (1996), no.~2, 329--346.

\bibitem{MY-21}
Clemens M\"{u}llner and Reem Yassawi, \emph{Automorphisms of automatic shifts},
  Ergodic Theory Dynam. Systems \textbf{41} (2021), no.~5, 1530--1559.

\bibitem{Queffelec-book}
Martine Queff\'{e}lec, \emph{Substitution dynamical systems---spectral
  analysis}, second ed., Lecture Notes in Mathematics, vol. 1294,
  Springer-Verlag, Berlin, 2010. \MR{2590264}

\bibitem{RS-book}
Grzegorz Rozenberg and Arto Salomaa, \emph{The mathematical theory of {L}
  systems}, Pure and Applied Mathematics, vol.~90, Academic Press, Inc.
  [Harcourt Brace Jovanovich, Publishers], New York-London, 1980.

\bibitem{ST-2015}
Ville Salo and Ilkka T\"{o}rm\"{a}, \emph{Block maps between primitive uniform
  and {P}isot substitutions}, Ergodic Theory Dynam. Systems \textbf{35} (2015),
  no.~7, 2292--2310.

\bibitem{twosidedfixedpointsSW}
Jeffrey Shallit and Ming-wei Wang, \emph{On two-sided infinite fixed points of
  morphisms}, Theoret. Comput. Sci. \textbf{270} (2002), no.~1-2, 659--675.
  \MR{1871089}

\end{thebibliography}
\end{document}